\pgfplotsset{compat=1.15}
\pgfplotsset{compat=newest}
\theoremstyle{plain}
\newtheorem{theorem}{Theorem}[section]
\newtheorem*{theo}{Theorem}
\newtheorem{lemma}[theorem]{Lemma}
\newtheorem{corollary}[theorem]{Corollary}
\theoremstyle{definition}
\newtheorem{definition}[theorem]{Definition}
\newtheorem{pdefinition}[theorem]{Proposition-Definition}
\newtheorem{example}[theorem]{Example}
\theoremstyle{remark}
\newtheorem{remark}[theorem]{Remark}
\let\amstexbig\big
\def\newbig#1{%
  \ifx#1|%
    \expandafter\@firstoftwo
  \else
    \expandafter\@secondoftwo
  \fi
  {\big@bar}%
  {\amstexbig{#1}}%
}
\def\big@bar{\bBigg@{1.1}|}
\DeclareMathOperator{\image}{Im}
\DeclareMathOperator{\rank}{rank}
\DeclareMathOperator{\identity}{Id}
\DeclareMathOperator{\diag}{diag}
\DeclareMathOperator{\kernel}{Ker}
\numberwithin{equation}{section}
\newcommand{\R}{\mathbb R}
\newcommand{\Z}{\mathbb Z}
\newcommand{\C}{\mathbb C}
\newcommand{\theoref}[1]{Theorem~\ref{#1}}
\newcommand{\lemref}[1]{Lemma~\ref{#1}}
\newcommand{\corref}[1]{Corollary~\ref{#1}}
\newcommand{\defiref}[1]{Definition~\ref{#1}}
\newcommand{\secref}[1]{Section~\ref{#1}}
\begin{document}
\title{The Birkhoff-Grothendieck theorem}
\author{ Oumar Wone}
\address{Oumar Wone, 
Chapman University, One university Drive \\
 Orange, CA 92866, USA.}
%\email{ahmed.sebbar@math.u-bordeaux1.fr}
\email{wone@chapman.edu}
\begin{abstract}We give a historical presentation of the Grothendieck theorem on the splitting of vector bundles over the Riemann sphere, and explore some of its links with the Riemann-Hilbert-Birkhoff problems and the Birkhoff factorization theorem.
  \end{abstract}
\subjclass[2010]{34A20, 32L05, 34A30, 34-06}
 \maketitle

\section{Introduction}
We give in this article a historical exposition of the Grothendieck splitting theorem for vector bundles on the Riemann sphere. The Grothendieck theorem has a long and distinguished story and has over the centuries been discovered by many mathematicians. The first discoverers of the theorem were in hindsight, maybe Dedekind and Weber, who proved the following \cite[prop.~3.1]{hazewinkel}
\begin{theo}[Dedekind-Weber]
Let $\rm L$ be a field, for $x$ a variable, we consider the ring $\rm L\left[x,x^{-1}\right]$. For a given matrix $A\in\rm{GL}(n,L\left[x,x^{-1}\right])$, there exists matrices $B\in\rm{GL}(n,L\left[x\right])$, $C\in\rm{GL}(n,L\left[x^{-1}\right])$ such that
\[
  BAC =
  \begin{bmatrix}
    x^{d_1} & & &\\
      &x^{d_2} & &\\
    &  &\ddots& \\
    & & &x^{d_n}
  \end{bmatrix}
\]

is a diagonal matrix, and where $d_1\geqslant d_2\geqslant\ldots\geqslant d_n$, $d_i\in\Z$ and the sequence $d_i$ is unique.
\end{theo}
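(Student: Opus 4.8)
The plan is to prove existence of the factorization by induction on $n$ (the case $n=1$ being immediate), after a normalization, and then to deduce uniqueness from a dimension count. First, since $x^{N}\cdot\diag(1,\dots,1)\in\mathrm{GL}(n,\mathrm L[x])$ for $N\geqslant 0$, after replacing $A$ by $x^{N}A$ with $N$ large we may assume $A\in M_{n}(\mathrm L[x])$; then $\det A$, being both a unit of $\mathrm L[x,x^{-1}]$ and a polynomial, equals $cx^{k}$ with $c\in\mathrm L^{\times}$ and $k\geqslant 0$. A factorization $x^{N}A=B^{-1}\diag(x^{e_{1}},\dots,x^{e_{n}})C^{-1}$ of the normalized matrix gives back $A=B^{-1}\diag(x^{e_{1}-N},\dots,x^{e_{n}-N})C^{-1}$, so this reduction is harmless.

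For the inductive step, regard $\mathrm L[x]^{n}$ and $M:=A\cdot\mathrm L[x^{-1}]^{n}$ as lattices in $\mathrm L(x)^{n}$. For a nonzero $w\in\mathrm L[x]^{n}$ one has $x^{d}w\in M$ precisely when $d\leqslant-\deg(A^{-1}w)$, where $\deg$ of a vector of Laurent polynomials means its largest exponent; hence
\[
d_{1}:=\max\bigl\{-\deg(A^{-1}w)\ :\ w\in\mathrm L[x]^{n}\text{ spans an }\mathrm L[x]\text{-direct summand of }\mathrm L[x]^{n}\bigr\}
\]
is a well-defined integer — the set is nonempty (take $w=e_{1}$) and bounded above by $\deg A$. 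Fix $(d_{1},w)$ attaining the maximum. Since $w$ is primitive in $\mathrm L[x]^{n}$, the vector $x^{d_{1}}w$ is primitive in $\mathrm L[x,x^{-1}]^{n}$, and therefore spans an $\mathrm L[x^{-1}]$-direct summand of $M$ (a non-unit of $\mathrm L[x^{-1}]$ is a non-unit of $\mathrm L[x,x^{-1}]$, so cannot divide it). Completing $w$ to an $\mathrm L[x]$-basis of $\mathrm L[x]^{n}$ yields $P\in\mathrm{GL}(n,\mathrm L[x])$ with first column $w$, and completing $x^{d_{1}}w$ to an $\mathrm L[x^{-1}]$-basis of $M$ yields $Q\in\mathrm{GL}(n,\mathrm L[x^{-1}])$ whose columns, after multiplication by $A$, form that basis; then $P^{-1}AQ=\left(\begin{smallmatrix}x^{d_{1}}&\mathbf r\\\mathbf 0&A'\end{smallmatrix}\right)$ with $A'\in\mathrm{GL}(n-1,\mathrm L[x,x^{-1}])$ (the block matrix is invertible and $x^{d_{1}}$ is a unit). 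Now subtracting $\mathrm L[x]$-combinations of the rows of $A'$ from $\mathbf r$ lowers the degrees of its entries, and once they are all $\leqslant d_{1}$ the column operations $C_{j}\mapsto C_{j}+q_{j}C_{1}$ with $q_{j}\in\mathrm L[x^{-1}]$ — which add to $\mathbf r$ an arbitrary row vector of degree $\leqslant d_{1}$ — annihilate it, neither family of moves disturbing $A'$. The induction hypothesis applied to $A'$, followed by conjugation by a permutation matrix (which lies in both $\mathrm{GL}(n,\mathrm L[x])$ and $\mathrm{GL}(n,\mathrm L[x^{-1}])$), produces the required diagonal form with $d_{1}\geqslant d_{2}\geqslant\dots\geqslant d_{n}$.

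For uniqueness, put $H_{j}(A):=\{\,v\in\mathrm L[x]^{n}:A^{-1}v\in x^{j}\mathrm L[x^{-1}]^{n}\,\}$ for $j\in\Z$, a finite-dimensional $\mathrm L$-vector space. If $A_{1}=BAC$ with $B\in\mathrm{GL}(n,\mathrm L[x])$ and $C\in\mathrm{GL}(n,\mathrm L[x^{-1}])$, then $v\mapsto B^{-1}v$ is an $\mathrm L$-isomorphism $H_{j}(A_{1})\to H_{j}(A)$, so $j\mapsto\dim_{\mathrm L}H_{j}$ depends only on the double coset. For $A=\diag(x^{d_{1}},\dots,x^{d_{n}})$ one computes $\dim_{\mathrm L}H_{j}=\sum_{i=1}^{n}\max(0,\,j+d_{i}+1)$, whose successive differences $\dim_{\mathrm L}H_{j}-\dim_{\mathrm L}H_{j-1}=\#\{\,i:d_{i}\geqslant-j\,\}$ recover the multiset $\{d_{1},\dots,d_{n}\}$; hence the non-increasing sequence $(d_{i})$ is unique.

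The only non-formal step is the claim that the entries of $\mathbf r$ can be brought below degree $d_{1}$ by row operations against $A'$ — in the language of vector bundles on the Riemann sphere, that the sub-line-bundle of maximal degree $d_{1}$ splits off. This holds precisely because $d_{1}$ was chosen maximal: an entry of $\mathbf r$ resisting reduction would produce a rank-two subbundle, hence a sub-line-bundle, of degree exceeding $d_{1}$, a contradiction. Making this quantitative — e.g.\ via a monovariant built from the spans $\deg-\operatorname{ord}$ of the entries of $\mathbf r$ — is where the real work lies; the normalization, the block reduction and the uniqueness count are routine.
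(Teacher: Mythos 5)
The paper itself offers no proof of this statement --- it cites Hazewinkel--Martin and later only sketches the equivalent Grothendieck splitting (Theorem 3.1) by induction on the rank, with Riemann--Roch carrying the weight --- so your proposal must stand on its own. Its skeleton is the standard lattice-theoretic form of that induction, and the uniqueness half, via the double-coset invariants $j\mapsto\dim_{\mathrm L}H_j(A)$ and their successive differences, is complete and correct. But the existence half has a genuine gap, and it is exactly the step that carries the whole theorem: the claim that the row $\mathbf r$ in $P^{-1}AQ=\left(\begin{smallmatrix}x^{d_1}&\mathbf r\\ \mathbf 0&A'\end{smallmatrix}\right)$ can be brought to degree $\leqslant d_1$ by $\mathrm L[x]$-row operations against $A'$. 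You flag this yourself as ``where the real work lies,'' and it is. Concretely: after diagonalizing $A'$ by induction, you must kill each entry $r_j$ modulo $x^{d_j}\mathrm L[x]+x^{d_1}\mathrm L[x^{-1}]$, which equals all of $\mathrm L[x,x^{-1}]$ only when $d_j\leqslant d_1+1$; if some $d_j\geqslant d_1+2$, the monomials $x^m$ with $d_1<m<d_j$ cannot be removed by either family of moves, and excluding this case requires producing, from such a $2\times 2$ block, a primitive $w$ with $-\deg(A^{-1}w)>d_1$ (or, in bundle language, a line subbundle of degree $>d_1$ inside the corresponding rank-two extension). That computation --- the analogue of the $H^1$-vanishing or Riemann--Roch step in the paper's sketch --- is precisely the content of the theorem, and it is not in your write-up; without it the induction does not close.

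Two smaller defects in the same half. First, $x^N\diag(1,\dots,1)$ is \emph{not} in $\mathrm{GL}(n,\mathrm L[x])$ for $N>0$, since its determinant $x^{nN}$ is not a unit of $\mathrm L[x]$; the normalization $A\mapsto x^NA$ is still harmless, but for the reason given in your second sentence (the factorization transfers), not the first. Second, the parenthetical justifying that $x^{d_1}w$ is primitive in $M$ is false as stated: $x^{-1}$ is a non-unit of $\mathrm L[x^{-1}]$ that \emph{is} a unit of $\mathrm L[x,x^{-1}]$, so primitivity in $\mathrm L[x,x^{-1}]^n$ does not by itself give primitivity of $A^{-1}(x^{d_1}w)$ over $\mathrm L[x^{-1}]$. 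The conclusion is still true, but only because divisibility by $x^{-1}$ would give $x^{d_1+1}w\in M$ and contradict the maximality of $d_1$ --- an appeal you do not make at that point.
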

When $\rm L=\C$, the Dedekind-Weber theorem is equivalent to the splitting of algebraic vector bundles on the complex projective line, see \cite{hazewinkel} or equivalently by GAGA to the splitting of holomorphic vector bundles on the Riemann sphere, that is to the theorem of Grothendieck \cite{grothendieck}. 

In the analytic setting a similar factorization of matrices of holomorphic functions were later proved by Hilbert, Plemelj, and Birkhoff, \cite{birkhoff1, birkhoff2, birkhoff3, hilbert3, plemelj, plemelj1}. This factorization of holomorphic matrices is equivalent to the Grothendieck theorem, see \corref{birkhofffac} for one direction of this equivalence. 

Hilbert, Plemelj and Birkhoff were not interested in the classification of vector bundles over the Riemann sphere per say, but in the so-called classical Riemann-Hilbert problem and the Riemann-Hilbert boundary value problem, Hilbert \cite{hilbert1}, Plemelj \cite{plemelj1}, Birkhoff \cite{birkhoff3}. On the other hand the Birkhoff problem was mainly investigated by Birkhoff \cite{birkhoff1}. 

The classical Riemann-Hilbert was problem $21$ on the famous list of Hilbert problems given in 1900, \cite{hilbert1}. It consists in finding a linear homogeneous differential system on a Riemann surface $M$, $\dfrac{dw}{dz}=A(z)w$, with $A(z)\in \rm{M}(n,\C)$, with prescribed singularities set $\mathscr S$ (at each point of which it must have only singularity of the first kind, i.e. simple poles), and with prescribed monodromy representation $\Psi:\pi_1(M\backslash \mathscr S,P)\to\rm{GL}(n,\C)$, $P\in M\backslash \mathscr S$. In this strong formulation the problem has no solution without some additional hypothesis. Indeed Bolibrukh gave counterexamples to it in \cite{boli2, boli1}, see also \cite{kostov}. But if one requires that all the singularities be regular singular (moderate growth of solutions) then the problem has been solved many times since its formulation by Hilbert, and by many mathematicians over the years. Among others we may mention Plemelj \cite{plemelj}, who based his study on the so-called Riemann-Hilbert boundary value problem for singular integral equations, \cite{hilbert2, muskhelishvili, vekua}. Then we have the work of Birkhoff in \cite{birkhoff3}, who using  successive approximations, simplified the work of Plemelj. Following that Garnier \cite{garnier1}, simplified and extended the result of Birkhoff \cite{birkhoff3} on the Riemann-Hilbert boundary value problem, to a more general class of data. Moreover Garnier \cite{garnier2} was interested in the solution of the Riemann-Hilbert problem in family, i.e. the problem of isomonodromy. This means in modern terms that one wants to find the necessary and sufficient conditions for a family of vector bundles with flat connections on a suitable complex manifold, to have "constant" monodromy. We refer to \cite{sabbah} and references cited in there for a nice account on this topic. There is also the interesting work of Lappo-Danilevsky \cite{lapp}, who based his study on the use of analytic functions on matrices, to express the solutions of a Fuchsian system and the monodromy matrices as convergent series of the matrix coefficients of the system. The first modern attempt to the solution of the Riemann-Hilbert problem in the regular singular case was done by Levelt \cite{levelt}. Then Rohrl \cite{rohrl}, using the cohomological framework and the theory of fiber bundles, rediscovered the theorem. Later on Deligne \cite{deligne} generalized the work of Rohlr for vector bundles on varieties of higher dimensions. See also \cite{andre, katz}. Besides there is the Tannakian approach to the Riemann-Hilbert problem, as emphasized by Katz. In order to explain it, one needs firstly to generalize differential systems (equations) on a Riemann surface $M$ to vector bundles on complex manifolds $M$ equipped with flat holomorphic connections. Then one builds a tensor category $\rm{D.E.}(\mathit M)$ of 'differential' equations on $M$. With that in hand Katz has shown that there is an equivalence of tensor categories between the category $\rm{D.E.}(\mathit M)$ of differential equations on $M$, and the tensor category of finite dimensional $\C$-representations of the fundamental group of $M$, with chosen base point. One may consult \cite{katz1, katz3}. In another direction Katz has popularized the interesting Grothendieck-Katz conjecture, which originates from the problem, first studied by Schwartz \cite{schwarz}, of finding when the Gaussian hypergeometric differential equation admits a full set of algebraic solutions over the field of rational functions $\C(x)$. Schwarz \cite{schwarz}, and quite recently Beukers-Heckman, for the generalized hypergeometric differential equation, have shown that this happens precisely when one has a so-called interlacing property, on the unit circle; see \cite{schwarz, beukers}. Using methods from algebraic geometry, and restricting to the arithmetic setting, Katz \cite{katz2, katz4}, recovered the results of Schwartz and Beukers-Heckman. His approach is via the study of the $p$-curvatures of their associated connections. When all but a finite number of their $p$-curvatures vanish, he showed that the connections in question have a full set of algebraic solutions. More generally, Katz showed for the Gauss-Manin connections and their direct factors, that the vanishing of sufficiently many of their $p$-curvatures is equivalent to the connections having a full set of algebraic solutions. The Grothendieck-Katz conjecture is the conjectural equivalence, for a given a connection on a vector bundle on a variety over a number field, of the fact that the vanishing of all but finitely many $p$-curvatures of the connection is equivalent to the statement that the connection has a full set of algebraic solutions. Very recently, André \cite{andre1} and Bost \cite{bost}, independently, made important contributions in the study of the Grothendieck-Katz conjecture. Both used suitable reformulations of the algebraicity criterion of Chudnovsky \cite{chudnovsky1, chudnovsky2, chudnovsky3} which is reminiscent of the classical algebraicity criterion of Borel-Dwork, to show that the Grothendieck-Katz conjecture holds when the differential Galois group of the connection has a solvable neutral component.

Let us briefly explain the Birkhoff problem in a nutshell. It consists in the following: one considers the Riemann sphere $\mathbb P^1(\C)$ together with the two points $\{a_1=\infty, a_2=0\}$. To $a_2=0$ it is assigned the Poincaré rank $r_2=0$, and at $a_1=\infty$ the Poincaré rank $r_1=r$, $r\in\Z_{+}$ and any Stokes data, are fixed. Then one asks for the existence of a linear homogenous differential system on $\mathbb P^1(\C)$ which is singular at only the two points $\{a_1=\infty, a_2=0\}$ and with the aforementioned fixed singularity profile. 

Let us also remark that the Birkhoff and Riemann-Hilbert problems are of a similar kind since they can be subsumed under the so-called Generalized Riemann-Hilbert problem, see \cite{boli3}.

The plan of this article is as follows: we introduce in \secref{vec} the notations necessary in order to formulate the Grothendieck splitting theorem, by giving basics on vector bundles on Riemann surfaces. Then in \secref{birkgrothen} we formulate the theorem of Grothendieck, \theoref{grothendieck} and give some further work by Atiyah and Horrocks in the spirit of the theorem of Grothendieck. Finally in \secref{RH}, we give an introduction to linear homogenous differential equations or systems on a Riemann surface $M$, and explain more in depth the classical Riemann-Hilbert problem, by giving when it admits a positive solution without any restriction (regular singular situation), and by presenting a case where one encounters a counterexample to it, as given by Bolibrukh (Fuchsian case). Then finally we introduce the Birkhoff theorem (\theoref{birkh}), the proof of which relies on the Birkhoff factorization lemma (\lemref{birkho}), a consequence of which is the Grothendieck theorem, see \corref{birkhofffac}.
\section{Vector bundles on Riemann surfaces}
\label{vec}
In this preliminary section we use \cite{griffiths, forster, gunning, hitchin}. We give here the basics on Riemann surfaces and vector bundles.
\begin{definition}
A Riemann surface $M$ is a connected one dimensional complex manifold, i.e. a two dimensional real smooth manifold with a maximal set of coordinate charts $\phi_\alpha:U_\alpha\to\R^2\simeq\C$: such that the change of charts $\phi_\beta\circ\phi^{-1}_\alpha$, is an invertible holomorphic function from $\phi_\alpha(U_\alpha\cap U_\beta)$ to $\phi_\beta(U_\alpha\cap U_\beta)$, for all $\alpha,\beta$, and $M$ is connected.
\end{definition}
\begin{example}
Recall that $\mathbb P^1(\C)$ is the set lines of $\C^2$ through the origin: For $(z_0,z_1)\in\C^2\setminus\{(0,0)\}$ we define
$$\left[z_0:z_1\right]=\{\lambda(z_0,z_1),\lambda\in\C\}$$
then
$$\mathbb P^1(\C):=\{\left[z_0:z_1\right],\left(z_0,z_1\right)\in\C^2\setminus\{(0,0)\}\}.$$
Let $M=\mathbb P^1(\C)$ be the usual coordinate patches $U_0=\{\left[z_0:z_1\right]\in\mathbb P^1(\C),\,z_0\not=0\}$ and $U_1=\{\left[z_0:z_1\right]\in\mathbb P^1(\C),\,z_1\not=0\}$. Then we have the following two homeomorphisms
$$\psi_0:U_0\to\C,\;\left[z_0:z_1\right]\mapsto \frac{z_1}{z_0}$$
and

$$\psi_1:U_1\to\C,\;\left[z_0:z_1\right]\mapsto \frac{z_0}{z_1};$$
on the overlap we have
$$\psi_1\circ\psi_0^{-1}:\C^\times\to\C^\times,\;\;z\mapsto\frac{1}{z},$$
which is holomorphic.
%\begin{equation*}
%\begin{split}
%&\mathbb P^1(\C)\setminus{\left[1:0\right]}\to S^2\setminus N\\
%&\left[z_1:z_2\right]\mapsto\frac{z_1}{z_2}\mapsto\bigg(\frac{2u}{1+(u^2+v^2)},
%\frac{2v}{1+(u^2+v^2)},\frac{-1+(u^2+v^2)}{1+(u^2+v^2)}\bigg)\\
%&\left[1:0\right]\mapsto (0,0,1)=:N
%\end{split}
%\end{equation*}
%where $u+iv:=\frac{z_1}{z_2}$. Remark that 
%$$\C\ni(u,v)\mapsto\bigg(\frac{2u}{1+(u^2+v^2)},
%\frac{2v}{1+(u^2+v^2)},\frac{-1+(u^2+v^2)}{1+(u^2+v^2)}\bigg)$$
%is the inverse of the stereographic projection $\phi_0$ from the north pole.
\end{example}
%\begin{definition} A complex manifold $M$ of dimension $m\geqslant1$, is defined similarly to a Riemann surface, but this time the transition functions are invertible holomorphic maps from an open set of $\C^m$ to an open set of $\C^m$. \end{definition}

%\begin{definition}
%Let $M$ be a complex manifold of dimension $m\geqslant1$ and $N$ be a complex manifold of dimension $n\geqslant1$. A holomorphic map $f:N\to M$ is a continuous map such that for each coordinate chart $\phi_\alpha:U_\alpha\to\C^m$ on $M$ and each chart $\psi_\beta:V_\beta\to\C^n$ on $N$, such that $f(V_\beta)\subset U_\alpha$ we have that
%$$\phi_\alpha\circ f\circ \psi_\beta^{-1}$$
%is holomorphic.\end{definition}
\begin{definition}
\label{def2}
A rank $n\geqslant1$ vector bundle over a Riemann surface $M$ is a complex manifold $E$ of dimension $n+1$ with a holomorphic projection $\pi:E\to M$ such that
\begin{itemize}
\item for each $z\in M$, $\pi^{-1}(z)$ is an $n$-dimensional complex vector space.
\item each point $m\in M$ has a neighborhood $U$ and a homeomorphism $\psi_U$ such that
\begin{center}
\tikzstyle{grisEncadre}=[dashed]
\tikzstyle{nfEncadre}=[thick]
\tikzstyle{nffEncadre}=[very thick]
\tikzstyle{ddEncadre}=[densely dotted]
\tikz \node [scale=1, inner sep=0] {
\begin{tikzcd}
	&\pi^{-1}(U)\arrow{d}{\pi}\arrow["{\psi_U}"]{r}{}& U\times\C^n\arrow[dl,"{pr_U}" description]\\
	&U\end{tikzcd}};
	\end{center}
is commutative.
\item The transition functions $\psi_V\circ\psi_U^{-1}$ are of the form
$$(z,w)\mapsto(z,g_{VU}(z)w)$$
where $g_{VU}\colon U\cap V\to \rm{GL}(n,\C)$ is a holomorphic map to the space of invertible $n\times n$ matrices. 
\end{itemize}
A vector bundle of rank $1$, is called a line bundle.
\end{definition}

% \begin{definition}
% \label{def1}
%A holomorphic line bundle $L$ over a Riemann surface $M$ is a two dimensional complex manifold $L$ with a holomorphic projection 
%$$\pi:L\to M$$
%such that
%\begin{itemize}
%\item for each $m\in M$, $\pi^{-1}(m)$ has the structure of a one-dimensional vector space,
%\item each point $m\in M$ has a neighborhood $U$ and a homeomorphism $\psi_U$ such that
%\begin{center}
%\tikzstyle{grisEncadre}=[dashed]
%\tikzstyle{nfEncadre}=[thick]
%\tikzstyle{nffEncadre}=[very thick]
%\tikzstyle{ddEncadre}=[densely dotted]
%\tikz \node [scale=1, inner sep=0] {
%\begin{tikzcd}
	%&\pi^{-1}(U)\arrow{d}{\pi}\arrow["{\psi_U}"]{r}{}& U\times\C\arrow[dl,"{pr_U}" description]\\
	%&U\end{tikzcd}};
	%\end{center}
	%is commutative.
%\item $\psi_V\circ\psi_U^{-1}$ is of the form
%$$(m,w)\mapsto(m,g_{VU}(m)w)$$
%where $g_{VU}$ is a non-vanishing holomorphic function. The functions $g_{VU}$ are called the transition functions of the line bundle and $\psi_U$ is a local trivialization over $U$ of the line bundle.
%\end{itemize}
%\end{definition}
\begin{example}
We give examples of line bundles. We fix $p\in M$, $U_0$ a neighborhood of $p$ with coordinate chart $z$ such that $z(p)=0$. Let $U_1=M\setminus\{p\}$. Then we can use $z$ as a transition function to define a line bundle on $M$ since $z=g_{01}$ is holomorphic and non-vanishing on $U_0\cap U_1=U_0\setminus\{p\}$. We patch together $U_0\times\C$ and $U_1\times\C$ over $U_0\cap U_1$ by using $\psi$ defined by
$$\psi(m,w)=(m,g_{01}(m)w).$$
This gives for each point $p\in M$ a line bundle which we denote by $L_p$, called the point bundle associated to $p$. 

Let $M=\mathbb P^1(\C)$ with the usual coordinate patches $U_0=\{\left[z_0:z_1\right]\in\mathbb P^1(\C),\,z_0\not=0\}$ and $U_1=\{\left[z_0:z_1\right]\in\mathbb P^1(\C),\,z_1\not=0\}$. Then the transition function $g_{01}(z)=z^n$ on $U_0\cap U_1=\C^\times$ defines a line bundle on $\mathbb P^1(\C)$, usually denoted $\mathcal O_{\scaleto{\mathbb P^1(\C)}{7pt}}(k)$, for each $k\in\Z$. 

%A section of this line bundle is given by holomorphic functions $s_0$ and $s_1$ on $\C$ such that
%$$s_0(z)=z^ns_1(\tilde{z}),\;\tilde{z}=\frac{1}{z}$$
%on the overlap. Expanding these functions in their respective local coordinates, and using $\tilde{z}=z^{-1}$ we obtain
%$$\sum_{m\geqslant0}a_mz^m=z^n\sum_{m\geqslant0}\tilde{a_m}z^{-m}.$$
%Equating coefficients we get
%$$\tilde{a_m}=a_m=0,\;m>n$$
%and
%$$\tilde{a_0}=a_n,\,\tilde{a_1}=a_{n-1},\ldots,\tilde{a_n}=a_0.$$
%Thus the section is given by a polynomial of degree $\leqslant n$
%$$\sum_{0}^na_mz^m.$$
%Hence the dimension of $H^0(\mathbb P^1(\C),\mathcal O(n))=n+1$.
%\item $L\mapsto L^\star$, also denoted $L^{-1}$ and called the dual of $L$. It has transition functions $g_{\alpha\beta}(L^{\star})=g_{\alpha\beta}^{-1}(L)$.
%\item $(L,\tilde{L})\mapsto L\otimes \tilde{L}$. It has transition functions
%$$g_{\alpha\beta}(L\otimes \tilde{L})=g_{\alpha\beta}(L)g_{\alpha\beta}(\tilde{L}).$$
%\item $(L,\tilde{L})\mapsto\HM(L,\tilde{L})\cong L^\star\otimes\tilde{L}$, sections of $\HM(L,\tilde{L})$ are holomorphic homomorphisms from $L\to\tilde{L}$.
%\item $L\mapsto\HM(L,L)\simeq L^\star\otimes L$ is canonically trivial as the only endomorphisms of a one dimensional vector space are the scalars.
%\item If $s$ is a section of $L$ and $\tilde{s}$ a section of $\tilde{L}$, then the product $s\tilde{s}$ is a section of $L\otimes\tilde{L}$, usually just denoted $L\tilde{L}$.
\end{example}
\begin{definition}
A holomorphic section of a holomorphic vector bundle $E$ over $M$ is a holomorphic map $s:M\to E$ such that $\pi\circ s=\identity_M$.
\end{definition}
In local trivializations $\psi_U$, $\psi_V$ of the vector bundle $E$, the section gives
$$\psi_U(s(m))=(m,s_U(m))\in U\times \C^n$$
and
$$\psi_V(s(m))=(m,s_V(m))\in V\times \C^n$$
hence on the overlap $U\cap V$, we have
$$\psi_V\circ\psi_U^{-1}(m,s_U(m))=(m,s_V(m))=(m,g_{VU}s_U(m)).$$
This gives
$$s_V=g_{VU}s_U,\,\text{on}\;U\cap V.$$
One can add sections pointwise
$$(s+t)(m):=s(m)+t(m)$$
and multiply sections by scalars
$$(\lambda s)(m)=\lambda s(m),$$
so the space of all sections of $E$ is a vector space, denoted $H^0(M,E)$. We have the following important result \cite{forster}, \cite[chap.~0]{griffiths}.
\begin{theorem}
If $M$ is a compact Riemann surface, $H^0(M,E)$ is finite dimensional.
\end{theorem}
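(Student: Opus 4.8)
The plan is to prove that $H^0(M,E)$ is finite dimensional when $M$ is compact by combining a finiteness-of-cover argument with the fact that a holomorphic function on a compact connected Riemann surface is constant (the maximum principle). First I would cover $M$ by finitely many trivializing open sets $U_1,\dots,U_N$ (possible since $M$ is compact), chosen so that each $U_i$ carries a coordinate chart identifying it with an open subset of $\C$ and so that slightly smaller open sets $V_i\Subset U_i$ still cover $M$. Over each $U_i$ a section $s\in H^0(M,E)$ is recorded by a holomorphic vector-valued function $s_i\colon U_i\to\C^n$, with the compatibility $s_j=g_{ji}s_i$ on overlaps. The idea is to show that the finite collection of Taylor coefficients of the $s_i$ up to some fixed order, at finitely many chosen points, already determines $s$; this exhibits $H^0(M,E)$ as (isomorphic to a subspace of) a finite dimensional vector space.

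The key technical input is a Cauchy-estimate / Montel-type argument: a uniform bound on a section on the compact set $M$ forces uniform bounds on all derivatives of the local representatives $s_i$ on the slightly smaller $V_i$, because on $\overline{V_i}$ one has Cauchy integral formulas on disks staying inside $U_i$. Concretely, I would consider the linear map $T\colon H^0(M,E)\to \bigoplus_i \mathcal{O}(V_i)^n$ given by restriction; it is injective (a section vanishing on all $V_i$ vanishes identically, since the $V_i$ cover $M$). Then I would equip $H^0(M,E)$ with the sup-norm $\|s\|=\max_i \sup_{\overline{V_i}}|s_i|$, which is finite by compactness, and show the unit ball of $H^0(M,E)$ is relatively compact: given a bounded sequence of sections, the local representatives form normal families on each $U_i$ by Montel's theorem, so a diagonal subsequence converges locally uniformly, the limits are holomorphic, they satisfy the same transition relations $s_j=g_{ji}s_i$ by passing to the limit, and hence patch to a genuine section of $E$. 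A normed vector space whose closed unit ball is compact is finite dimensional (Riesz's lemma), which concludes the proof.

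The main obstacle I anticipate is purely bookkeeping rather than conceptual: one must be careful that the transition matrices $g_{ji}$, being holomorphic and hence continuous, are bounded on the relevant compact overlaps $\overline{V_i}\cap\overline{V_j}$, so that the estimates transfer consistently between charts and the limiting section is well defined and independent of the choices; and one must verify that the topology induced by $\|\cdot\|$ does not depend (up to equivalence) on the chosen cover, though for the finiteness conclusion a single cover suffices. Everything else reduces to standard one-variable complex analysis (Cauchy estimates, Montel's theorem) applied chart by chart. An alternative, slicker route would be to invoke the general fact that coherent sheaf cohomology on a compact complex manifold is finite dimensional (Cartan--Serre), but the elementary Montel argument above is in the spirit of the cited references \cite{forster, griffiths} and self-contained.
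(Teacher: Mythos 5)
The paper itself offers no proof of this statement: it is quoted with references to \cite{forster} and \cite[chap.~0]{griffiths}, so there is no in-text argument to measure yours against. Your proposal is correct, and it is essentially the classical proof one finds in those sources: restriction to a finite cover makes $H^0(M,E)$ a normed space, Montel plus the transition relations shows the closed unit ball is sequentially compact, and Riesz's lemma forces finite dimensionality. Two points deserve care in the write-up. First, Montel's theorem yields locally uniform convergence only on the open sets where you have a uniform bound, whereas your norm is $\max_i\sup_{\overline{V_i}}|s_i|$; to get convergence in that norm you must first promote the bound from $\bigcup_j\overline{V_j}$ to an open neighborhood of each $\overline{V_i}$ inside $U_i$, using $s_i=g_{ij}s_j$ and the boundedness of the $g_{ij}$ on compact overlaps. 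You flag exactly this as ``bookkeeping,'' and indeed it is routine, but it is the one step where the argument would be incomplete if omitted. Second, your opening suggestion that finitely many Taylor coefficients at finitely many points determine a section is not what your proof actually uses, and on its own it would require an a priori bound on vanishing orders (a Riemann--Roch type input) that you do not have at this stage; similarly, the maximum principle for global holomorphic functions plays no role in the Montel--Riesz argument. Neither remark affects correctness: the second paragraph of your proposal is a complete and standard proof.
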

\begin{definition}
If $M$ is a compact Riemann surface, its genus $g_{\scaleto{M}{4pt}}$ is defined to be $\dim_\C H^0(M,K_{\scaleto{M}{4pt}})$, where $K_{\scaleto{M}{4pt}}$ is the canonical line bundle of $M$.
\end{definition}
\begin{example}
Every vector bundle on an arbitrary Riemann surface $M$ has at least one section, namely the one whose value at every point of $M$ is 0. It is called the zero section.

Let $M=\mathbb P^1(\C)$. A section of its canonical bundle $K_{\scaleto{\mathbb P^1(\C)}{7pt}}$ looks like $f_0(z)dz$ on $U_0$ and $f_1(\tilde{z})d\tilde{z}$ on $U_1$ where $f_0$ and $f_1$ are holomorphic functions on $\C$. These forms must agree on the overlap $U_0\cap U_1=\C^\times$, $\tilde{z}=\frac{1}{z}$. This gives
$$d\tilde{z}=-z^{-2}dz,\;f_0(z)dz=-z^{-2}f_1(z^{-1})dz\Longrightarrow f_0(z)=f_1(\tilde{z})=0$$
so $ g_{\scaleto{M}{4pt}}=0.$
Since $d\tilde{z}=z^{-2}(-dz)$, we have $K_{\scaleto{\mathbb P^1(\C)}{7pt}}=\mathcal O_{\scaleto{\mathbb P^1(\C)}{7pt}}(-2)$.

More generally a section of the line bundle $\mathcal{O}_{\scaleto{\mathbb P^1(\C)}{7pt}}(k)$, $k\in\Z$, is given by holomorphic functions $s_0$ and $s_1$ on $\C$ such that
$$s_0(z)=z^ks_1(\tilde{z}),\;\tilde{z}=\frac{1}{z}$$
on the overlap. Expanding these functions in their respective local coordinates, and using $\tilde{z}=z^{-1}$ we obtain
$$\sum_{l\geqslant0}a_lz^l=z^k\sum_{l\geqslant0}\widetilde{a_l}z^{-l}.$$
Equating coefficients we get
$$\widetilde{a_l}=a_l=0,\;l>k$$
and
$$\widetilde{a_0}=a_k,\,\widetilde{a_1}=a_{k-1},\ldots,\widetilde{a_k}=a_0.$$
Thus the section is given by a polynomial of degree $\leqslant k$ if $k\geqslant0$: $$\sum_{0}^ka_mz^m$$
and zero otherwise. Hence the dimension of $H^0(\mathbb P^1(\C),\mathcal O_{\scaleto{\mathbb P^1(\C)}{7pt}}(k))=k+1$, for $k\geqslant0$, and is equal to zero when $k<0$.
\end{example}

If $\mathcal S$ is a sheaf on $M$, we can construct the Cech cohomology groups $H^p(M,\mathcal S)$ with coefficients in $\mathcal S$ as follows. For a (locally finite) covering $\mathscr U=\{U_\alpha\}_{\alpha\in A}$ of $M$ by open sets, we introduce
$$\mathcal S^0=\oplus_\alpha\mathcal S(U_\alpha)$$
\begin{equation*}
\begin{split}
\mathcal S^1=&\oplus_{\alpha\not=\beta}\mathcal S(U_\alpha\cap U_\beta)\\
&\vdots\\
\mathcal S^p=&\oplus_{\alpha_0\not=\ldots\not=\alpha_p}\mathcal S(U_{\alpha_0}\cap\ldots U_{\alpha_p})
\end{split}
\end{equation*}
and define $\mathcal C^p$ to be the alternating elements in $\mathcal S^p$. This means that for a permutation of the indices the open set does not change but one multiplies the section on that set by the signature of the permutation.

Define the homomorphism of Abelian groups $\partial:\mathcal C^p\to \mathcal C^{p+1}$ by
$$(\partial f)_{\alpha_0\ldots\alpha_{p+1}}=\sum_{i=0}^{p+1}(-1^if_{\alpha_0\ldots\hat{\alpha_i}\ldots\alpha_{p+1}}\bigg|_{U_{\alpha_0\cap\ldots\cap U_{\alpha_{p+1}}}}.$$
Here $f_{\alpha_0\ldots\hat{\alpha_i}\ldots\alpha_{p+1}}\in\mathcal S(U_{\alpha_0}\cap\ldots U_{\alpha_{i-1}}\cap U_{\alpha_{i+1}}\ldots U_{\alpha_{p+1}})
$. $\partial$ is called the boundary operator and one has $\partial^2=0$. 
%For instance if
%$$f\in \mathcal S^0=\mathcal C^0\leftrightarrow f=(f_\alpha)_\alpha,\,f_\alpha\in\mathcal S(U_\alpha);$$
%$$(\partial f)_{\alpha\beta}=f_{\alpha}\big|_{U_\alpha\cap U_{\beta}}-f_{\beta}\big|_{U_\alpha\cap U_{\beta}}.$$
%Thus $\partial f=0\iff$ the local sections $f_\alpha$ piece together to give a global section.

\begin{definition}
The $p$-th cohomology group of $M$ with coefficients in $\mathcal S$, relative to the covering $\mathscr U=(U_\alpha)_\alpha$, is
$$H_{\mathscr U}^p(M,\mathcal S):=\frac{\kernel\partial:\mathcal C^p\to \mathcal C^{p+1}}{\image:\mathcal C^{p-1}\to\mathcal C^p}.$$
The $p$-th cohomology of $M$ with coefficients in $\mathcal S$ is then the direct limit over the open covers $\mathscr U$ of $M$, partially ordered by refinement, of the $H_{\mathscr U}^p(M,\mathcal S)$:
$$H^p(M,\mathcal S)=\varinjlim_{\text{$\mathscr U$ covering of M}}H_{\mathscr U}^p(M,\mathcal S).$$
\end{definition}

%\begin{definition}[continued]
%In general this is incalculable. In order to get around this problem one chooses in practice a so-called good covering $U$, that is a covering for which the $p$-th cohomology, $p>0$, of all the finite intersections of open sets of the covering, is zero:
%$$H^p(U_{i_1}\cap\ldots\cap U_{i_k},\mathcal{S}_{U_{i_1}\cap\ldots\cap U_{i_k}})=0,\;p>0.$$
% Then $H_U^p(M,\mathcal S)=H^p(M,\mathcal S)$.
% \end{definition}

%\begin{remark}
%What happens is that if $V=(V_\beta)_{\beta}$ is a refinement of $U=(U_\alpha)_\alpha$: for any $V_\beta$, there exists $\alpha$ such that $V_\beta\subset U_\alpha$, a situation we write $U\leqslant V$, then we have a map
%$$t_{UV}:H_U^p(M,\mathcal S)\to H_V^p(M,\mathcal S)$$
%and one shows that if $U\leqslant V\leqslant W$ then
%$$t_{UW}=t_{VW}\circ t_{UV};$$
%so $(U,t_{UV})$ is an inductive system and one can consider the direct limit.
%\end{remark}

%\begin{remark}
%If $M$ is a Riemann surface, and we take a covering of $M$ by connected open charts $U=\{U_\alpha\}_{\alpha}$, then we know that each finite intersection $U_{i_1}\cap\ldots\cap U_{i_k}$, if non-empty, is isomorphic to a non-empty connected non-compact open subset of $\C$. Let $\mathcal O_M$ be the sheaf of holomorphic functions on $M$, then we have
%$$H^p(U_{i_1}\cap\ldots\cap U_{i_k},\mathcal O_M\big|_{U_{i_1}\cap\ldots\cap U_{i_k}})=0,\;p>0.$$
% This follows from the Behnke-Stein theorem and Cartan Theorem $B$.
%\end{remark}

\begin{example}
For $L$ a line bundle, with transition functions $g_{\alpha\beta}=\psi_\alpha\circ\psi_{\beta}^{-1}$, then $g_{\alpha\beta}=g_{\beta\alpha}^{-1}$; so the family $g=(g_{\alpha\beta})$ lies in $\mathcal C^1$ for the sheaf $\mathcal O_{\scaleto{M}{4pt}}^\times$ of non-vanishing holomorphic functions. Furthermore
$$(\partial g)_{\alpha\beta\gamma}=g_{\beta\gamma}g_{\alpha\gamma}^{-1}g_{\alpha\beta}=\identity,$$
so $g=(g_{\alpha\beta})_{\alpha\beta}\in\kernel\partial$. Hence it defines an element of $H^1(M,\mathcal O_{\scaleto{M}{4pt}}^\times)$. Actually one can show that the set of isomorphism classes of line bundles on $M$ is $H^1(M,\mathcal O_{\scaleto{M}{4pt}}^\times)$.
\end{example}
For the following theorems we refer to \cite{gunning, forster, griffiths}.
\begin{theorem}[Vanishing theorem]
\label{vanish1}
Let $M$ be a Riemann surface. If $S=\mathcal O (E)$ is the sheaf of holomorphic sections of the vector bundle $E$, then $H^p(M,\mathcal S)=0$ for $p>1$. If $\mathcal S=\C$ or $\Z$, then $H^p(M,\mathcal S)=0$ for $p>2$. 
\end{theorem}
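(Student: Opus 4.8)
The plan is to exhibit, in each case, an explicit \emph{fine} resolution of $\mathcal S$ whose length is governed by the dimension of $M$, and then to invoke the abstract de Rham theorem to identify $H^p(M,\mathcal S)$ with the $p$-th cohomology of the associated complex of global sections; this last complex vanishes in the asserted range for purely dimensional reasons. At the outset I note that, by Rad\'o's theorem, a connected Riemann surface is second countable and hence paracompact, which is what makes fine resolutions acyclic and what makes the \v{C}ech cohomology of the definition above agree with derived-functor sheaf cohomology.

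For $\mathcal S=\mathcal O(E)$ I would use the $E$-twisted Dolbeault resolution
\[
0\longrightarrow\mathcal O(E)\longrightarrow\mathcal A^{0,0}(E)\xrightarrow{\bar\partial}\mathcal A^{0,1}(E)\xrightarrow{\bar\partial}\mathcal A^{0,2}(E)\longrightarrow\cdots,
\]
where $\mathcal A^{0,q}(E)$ denotes the sheaf of smooth $E$-valued $(0,q)$-forms; exactness is the $\bar\partial$-Poincar\'e (Dolbeault--Grothendieck) lemma, and each $\mathcal A^{0,q}(E)$ is a module over the soft sheaf of germs of smooth functions, hence fine and therefore acyclic. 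Since $M$ has complex dimension $1$ there are no nonzero $(0,q)$-forms for $q\ge 2$, so this resolution is in fact the two-term complex $0\to\mathcal O(E)\to\mathcal A^{0,0}(E)\xrightarrow{\bar\partial}\mathcal A^{0,1}(E)\to 0$. Hence $H^p(M,\mathcal O(E))$ is the $p$-th cohomology of $\Gamma(M,\mathcal A^{0,0}(E))\xrightarrow{\bar\partial}\Gamma(M,\mathcal A^{0,1}(E))$, which is zero for $p\ge 2$; this gives $H^p(M,\mathcal O(E))=0$ for $p>1$.

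For $\mathcal S=\C$ the same mechanism works with the smooth de Rham resolution $0\to\C\to\mathcal A^0\xrightarrow{d}\mathcal A^1\xrightarrow{d}\mathcal A^2\to 0$, which now terminates in degree $2=\dim_{\R}M$ because $M$ carries no nonzero smooth $k$-forms for $k>2$; the $\mathcal A^k$ are again fine, so $H^p(M,\C)\cong H^p_{\mathrm{dR}}(M)=0$ for $p>2$. For $\mathcal S=\Z$ there is no such complex at hand, so I would pass to topology: by Rad\'o's theorem $M$ is triangulable, hence homotopy equivalent to a CW complex of dimension at most $2$, and on the locally contractible paracompact space $M$ the \v{C}ech cohomology with coefficients in the locally constant sheaf $\Z$ agrees with singular cohomology, whence $H^p(M,\Z)=0$ for $p>2$. (Alternatively: a topological surface has covering dimension $2$, and on a paracompact space of covering dimension $n$ every sheaf is acyclic in degrees $>n$, which disposes of both constant coefficients at once.)

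The dimension counts themselves are trivial; the real content lies in the structural inputs being imported — exactness of the Dolbeault and de Rham complexes at the level of germs (the $\bar\partial$- and $d$-Poincar\'e lemmas), fineness of the sheaves of smooth sections, the abstract de Rham theorem that a fine (more generally acyclic) resolution computes sheaf cohomology, and, for the integral coefficients, the triangulability of Riemann surfaces. I expect assembling these facts from \cite{gunning, forster, griffiths} to be the only genuine labor, after which the brevity of each resolution forces the stated vanishing.
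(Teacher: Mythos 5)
Your proof is correct. Note that the paper itself supplies no argument for this theorem --- it simply defers to \cite{gunning, forster, griffiths} --- so there is no internal proof to compare against; what you have written is precisely the standard argument those references contain. The three ingredients are all in order: Rad\'o's theorem guarantees paracompactness (needed both for fineness to imply acyclicity and for the \v{C}ech groups of the paper's definition to agree with derived-functor cohomology); the twisted Dolbeault and de Rham resolutions terminate in degrees $1$ and $2$ respectively for dimensional reasons, which kills $H^p(M,\mathcal O(E))$ for $p>1$ and $H^p(M,\C)$ for $p>2$; and for $\Z$, where no fine resolution is available, your fallback to triangulability (or to the covering-dimension vanishing theorem for arbitrary sheaves on a paracompact space of dimension $n$) is the right move and is the one point a careless writer might have glossed over by pretending the de Rham argument applies to integral coefficients.
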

We set $H^1(M,\mathcal O(E))=H^1(M,E)$.
\begin{theorem}[Serre duality for vector bundles]
If $M$ is a compact Riemann surface we have
$$H^{1}(M,E)\simeq H^0(M,K_{\scaleto{M}{5pt}}\otimes E^\star)^\star$$
and
$$H^{0}(M,E)\simeq H^1(M,K_{\scaleto{M}{5pt}}\otimes E^\star)^\star,$$
where $E^\star$ is the line bundle dual to $E$.
\end{theorem}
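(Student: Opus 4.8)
The natural route is through Dolbeault cohomology together with Hodge theory. First I would replace the sheaf cohomology group $H^{1}(M,E)$ by a $\bar\partial$-cohomology group. Since $M$ is one-dimensional there are no $E$-valued $(0,2)$-forms, so $0\to\mathcal O(E)\to\mathcal A^{0}(M,E)\xrightarrow{\ \bar\partial\ }\mathcal A^{0,1}(M,E)\to 0$ is a resolution of the sheaf $\mathcal O(E)$ of holomorphic sections by fine sheaves, and the Dolbeault isomorphism yields
\[
H^{1}(M,E)\;\cong\;H^{0,1}_{\bar\partial}(M,E):=\mathcal A^{0,1}(M,E)\big/\bar\partial\,\mathcal A^{0}(M,E).
\]
On the other side, $H^{0}(M,K_M\otimes E^{\star})$ is by definition the space of global holomorphic $E^{\star}$-valued $(1,0)$-forms.

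Next I would exhibit the duality pairing. Given a smooth $E$-valued $(0,1)$-form $\eta$ and a holomorphic $E^{\star}$-valued $(1,0)$-form $\omega$, the contraction $E^{\star}\otimes E\to\C$ turns $\omega\wedge\eta$ into an ordinary smooth $(1,1)$-form on $M$, and I set $\langle\omega,[\eta]\rangle:=\int_{M}\omega\wedge\eta$, a finite number since $M$ is compact. To see that this descends to $H^{0,1}_{\bar\partial}(M,E)$, observe that if $\eta=\bar\partial s$ then, $\omega$ being holomorphic and $M$ one-dimensional, $\omega\wedge\bar\partial s=-\,d(\omega\wedge s)$, so the integral vanishes by Stokes' theorem. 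This produces a $\C$-linear map $\Phi\colon H^{0}(M,K_M\otimes E^{\star})\to H^{1}(M,E)^{\star}$.

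The crux — and the step I expect to be the main obstacle — is to prove that $\Phi$ is an isomorphism, i.e.\ that the pairing is non-degenerate; here the analysis enters. Fix a conformal metric on $M$ and a Hermitian metric on $E$, which induces one on $E^{\star}$. The associated $\bar\partial$-Laplacian on $E$-valued $(0,q)$-forms is elliptic, and the Hodge theorem for this elliptic complex (as in \cite{griffiths}) gives a decomposition $\mathcal A^{0,1}(M,E)=\mathcal H^{0,1}(M,E)\oplus\bar\partial\,\mathcal A^{0}(M,E)$ in which $\mathcal H^{0,1}(M,E)$, the space of harmonic $E$-valued $(0,1)$-forms, is finite-dimensional and maps isomorphically onto $H^{0,1}_{\bar\partial}(M,E)$. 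A $(1,0)$-form with values in $E^{\star}$ is automatically annihilated by $\bar\partial^{\ast}$, so it is harmonic exactly when it is $\bar\partial$-closed, that is, holomorphic; hence the space of such harmonic forms is precisely $H^{0}(M,K_M\otimes E^{\star})$. The conjugate Hodge-star operator attached to the two metrics is then a conjugate-linear isomorphism $\mathcal H^{0,1}(M,E)\xrightarrow{\ \sim\ }H^{0}(M,K_M\otimes E^{\star})$ under which $\Phi$ becomes, up to a positive constant, the $L^{2}$-inner product; it is therefore non-degenerate in both variables, and since both cohomology spaces are finite-dimensional (again by the finiteness theorem above) $\Phi$ is an isomorphism. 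This is the first displayed identity. Should one wish to bypass Hodge theory, one may instead invoke Serre's original argument, deducing the duality from the topological duality between the Fréchet space $H^{0,1}_{\bar\partial}(M,E)$ and its strong dual, or give a sheaf-theoretic proof by \emph{dévissage} reducing to the classical case $H^{1}(M,\mathcal O_M)\cong H^{0}(M,K_M)^{\star}$.

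Finally, the second identity follows formally. Applying the first one with $E$ replaced by $K_M\otimes E^{\star}$ and using the canonical isomorphisms $(K_M\otimes E^{\star})^{\star}\cong K_M^{\star}\otimes E$ and $K_M\otimes K_M^{\star}\cong\mathcal O_M$, one obtains $H^{1}(M,K_M\otimes E^{\star})\cong H^{0}(M,E)^{\star}$. Dualizing — legitimate since all the spaces in sight are finite-dimensional by the finiteness theorem recalled above, so $H^{0}(M,E)$ is canonically its own double dual — gives $H^{1}(M,K_M\otimes E^{\star})^{\star}\cong H^{0}(M,E)$, which is the second assertion.
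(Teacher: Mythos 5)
Your argument is correct: the Dolbeault identification of $H^{1}(M,E)$, the residue-free Stokes computation showing the pairing descends (indeed $\omega\wedge\bar\partial s=-d(\omega\wedge s)$ for holomorphic $\omega$), the Hodge-theoretic identification of harmonic representatives with holomorphic $E^{\star}$-valued $(1,0)$-forms, and the formal deduction of the second isomorphism from the first via $(K_M\otimes E^{\star})^{\star}\cong K_M^{\star}\otimes E$ are all sound. Note, however, that the paper does not prove this statement at all --- it is quoted as background with references to Gunning, Forster, and Griffiths--Harris --- and your proof is essentially the standard Hodge-theoretic one found in the last of those sources, so there is nothing internal to the paper to compare it against.
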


Let us start with the exponential exact sequence on a compact Riemann surface $M$
\begin{center}
\tikzstyle{grisEncadre}=[dashed]
\tikzstyle{nfEncadre}=[thick]
\tikzstyle{nffEncadre}=[very thick]
\tikzstyle{ddEncadre}=[densely dotted]
\tikz \node [scale=1, inner sep=0] {
\begin{tikzcd}
    0\arrow{r} & \mathbb{Z}\arrow[hookrightarrow]{r}& \mathcal{O}_{\scaleto{M}{5pt}}\arrow{r}{e(2i\pi\,\cdot)} & \mathcal{O}_{\scaleto{M}{5pt}}^\times\arrow{r}& 1 
\end{tikzcd}};
\end{center}
where $e(2i\pi\,\cdot):f\mapsto \exp(2i\pi f)$, $f$ a section of $\mathcal O_{\scaleto{M}{5pt}}$.

By homological algebra this gives rise to a long exact sequence in cohomology
\begin{center}
\tikzstyle{grisEncadre}=[dashed]
\tikzstyle{nfEncadre}=[thick]
\tikzstyle{nffEncadre}=[very thick]
\tikzstyle{ddEncadre}=[densely dotted]
\tikz \node [scale=0.8, inner sep=0] {
\begin{tikzcd}
	\displaystyle0 \arrow[r] &\Z\arrow[hookrightarrow]{r}&\C\arrow{r}{}\arrow[d, phantom,""{coordinate, name=Z}&\C^\times\arrow{r}{}] &\C^\times\arrow[dll,"", rounded corners, to path={--([xshift=2ex]\tikztostart.east)|-(Z) [near end]\tikztonodes -| ([xshift=-2ex]\tikztotarget.west)--(\tikztotarget)}]\\
	{}{}&
	H^1(M,\Z)\arrow{r}{}&H^1(M,\mathcal O_{\scaleto{M}{5pt}})\arrow{r}{}&H^1(M, \mathcal O_{\scaleto{M}{5pt}}^\times)\arrow{r}{}&H^2(M,\Z)\arrow{r}{}&H^2(M,\mathcal O_{\scaleto{M}{5pt}})\arrow{r}{}&\ldots
\end{tikzcd}};
\end{center}

The first part of this sequence follows from the fact that the global holomorphic functions on a compact Riemann surface are constant. Since $\exp$ is surjective onto $\C^\times$, then by exactness we get the injection $H^1(M,\mathcal \Z)\hookrightarrow H^1(M,\mathcal O_{\scaleto{M}{5pt}})$. Also $H^2(M,\mathcal O_{\scaleto{M}{5pt}})=0$ by \theoref{vanish1}; so the previous long exact sequence reduces to the short exact sequence
\begin{center}
\tikzstyle{grisEncadre}=[dashed]
\tikzstyle{nfEncadre}=[thick]
\tikzstyle{nffEncadre}=[very thick]
\tikzstyle{ddEncadre}=[densely dotted]
\tikz \node [scale=1, inner sep=0] {
\begin{tikzcd}
    0\arrow{r} & \frac{H^1(M,\mathcal O_{\scaleto{M}{4pt}})}{H^1(M,\mathcal \Z)}\arrow[hookrightarrow]{r}& H^1(M,\mathcal O_{\scaleto{M}{4pt}}^\times)\arrow{r}{\delta} & H^2(M,\mathcal \Z)\arrow{r}& 0
\end{tikzcd}};
\end{center}
Since $M$ is a two dimensional compact oriented connected manifold, we have by Poincaré duality \cite{griffiths}$$H^2(M,\mathcal \Z)\simeq\Z.$$

\begin{definition}
The degree of the line bundle $L$ is $\delta(\left[L\right])$. It is denoted $\deg L$. The degree depends only on the isomorphism class $\left[L\right]$, the class of $L$ in $H^1(M,\mathcal O_{\scaleto{M}{5pt}}^\times)$. \end{definition}

\begin{definition}
If $E$ is a vector bundle on $M$, we define its degree by
$$\deg(E):=\deg(\det(E))$$
as the degree of the line bundle $\det(E):=\bigwedge^{{\scaleto{\rank(E)}{6pt}}}(E)$.
\end{definition}
%\begin{theorem}[Vanishing theorem for vector bundles]
%Let $\mathcal O(E)$ be the sheaf of holomorphic sections of a vector bundle $E$ on the compact Riemann surface $M$. Then we have 
%$$H^p(M,\mathcal O(E))=0,\;p>1.$$
%\end{theorem}
For the next theorem, one may consult \cite{griffiths}. We have
%\begin{theorem}[Serre duality for vector bundles]
%If $\mathcal O(E)$ is the sheaf of holomorphic sections of a vector bundle $E$ on the compact Riemann surface $M$, it holds
%$$H^{1}(M,\mathcal{O}(E))\simeq H^0(M,K_{\scaleto{M}{4pt}}\otimes E^\star)^\star.$$
%\end{theorem}
%We also have the
\begin{theorem}[Riemann-Roch]
If $E$ is a vector bundle on a compact Riemann surface $M$ of genus $g_{\scaleto{M}{4pt}}$, then 
$$\dim H^0(M,E)-\dim H^1(M,E)=\deg E+\rank(E)\times(1-g_{\scaleto{M}{4pt}}).$$  
\end{theorem}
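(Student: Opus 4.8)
The plan is to reduce to the case of line bundles by induction on the rank, and then to deduce the line-bundle case from the value for the trivial bundle by successively twisting by point bundles. Write $\chi(E) = \dim H^0(M,E) - \dim H^1(M,E)$; this is a well-defined integer, since $H^0(M,E)$ is finite dimensional on a compact Riemann surface and $H^1(M,E) \simeq H^0(M,K_M \otimes E^\star)^\star$ is then finite dimensional by Serre duality. Two elementary facts will be used repeatedly. (a) $\chi$, $\deg$ and $\rank$ are additive on any short exact sequence of holomorphic vector bundles $0 \to E' \to E \to E'' \to 0$: additivity of $\rank$ is clear, additivity of $\deg$ follows from $\det E \simeq \det E' \otimes \det E''$ together with $\deg(L_1 \otimes L_2) = \deg L_1 + \deg L_2$ (the connecting map $\delta$ of the exponential sequence is a group homomorphism on the group $H^1(M,\mathcal{O}_M^\times)$ of line bundles), and additivity of $\chi$ follows from the associated long exact cohomology sequence, which terminates after the $H^1$ terms because $H^2(M,\mathcal{O}(V)) = 0$ for every vector bundle $V$ by \theoref{vanish1}. (b) For a point $p \in M$ there is a short exact sequence of sheaves $0 \to \mathcal{O}(E \otimes L_p^{-1}) \to \mathcal{O}(E) \to \mathbb{C}_p^{\oplus \rank E} \to 0$, where $\mathbb{C}_p$ is the skyscraper sheaf with stalk $\mathbb{C}$ at $p$; since a skyscraper sheaf has no higher cohomology, taking cohomology gives $\chi(E \otimes L_p^{-1}) = \chi(E) - \rank E$, while also $\deg(E \otimes L_p^{-1}) = \deg E - \rank E$. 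Hence the asserted identity holds for $E$ if and only if it holds for $E \otimes L_p^{\pm1}$; more generally it is unchanged under tensoring by any tensor product of point bundles and their inverses.

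Next I would settle line bundles. For the trivial bundle $\mathcal{O}_M$ one has $\dim H^0(M,\mathcal{O}_M) = 1$, because $M$ is connected and its global holomorphic functions are constant, and $\dim H^1(M,\mathcal{O}_M) = \dim H^0(M,K_M) = g_M$ by Serre duality and the definition of the genus; since $\deg \mathcal{O}_M = 0$ and $\rank \mathcal{O}_M = 1$, the formula reads $1 - g_M = 0 + 1\cdot(1 - g_M)$, which is true. Now let $L$ be any line bundle and fix $P \in M$. Iterating (b) gives $\chi(L \otimes L_P^{\otimes N}) = \chi(L) + N$, so $\dim H^0(M, L \otimes L_P^{\otimes N}) \ge \chi(L) + N > 0$ once $N$ is large; combined with the equivalence at the end of the previous paragraph, we may therefore assume $L$ itself has a nonzero holomorphic section $s$. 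Then $L \simeq \mathcal{O}_M(D)$, where $D \ge 0$ is the divisor of zeros of $s$, and $\mathcal{O}_M(D)$ is obtained from $\mathcal{O}_M$ by a finite sequence of twists by point bundles $L_q$ (one per point of $D$, counted with multiplicity). Since the formula holds for $\mathcal{O}_M$ and is preserved under each such twist, it holds for $L$.

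Finally I would run the induction on $n = \rank E$, the case $n = 1$ being done. Let $E$ have rank $n \ge 2$. As in the line-bundle case, $\chi(E \otimes L_P^{\otimes N}) = \chi(E) + nN > 0$ for large $N$, so after replacing $E$ by $E \otimes L_P^{\otimes N}$ (permissible by (b)) we may assume $E$ has a nonzero holomorphic section; this section generates a rank-one subsheaf of $E$, whose saturation $L$ is a line \emph{subbundle} of $E$ — indeed, a saturated subsheaf of a locally free sheaf on a smooth curve is again locally free and has locally free quotient. Then $E'' := E/L$ is a holomorphic vector bundle of rank $n-1$, and (a) applied to $0 \to L \to E \to E'' \to 0$ together with the induction hypothesis for $L$ and $E''$ gives
\[
\chi(E) = \chi(L) + \chi(E'') = \bigl(\deg L + (1 - g_M)\bigr) + \bigl(\deg E'' + (n-1)(1 - g_M)\bigr) = \deg E + n(1 - g_M),
\]
which is the claim for $E$. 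I expect the real work to be in making (a) honest — especially the additivity of the degree, which rests on the multiplicativity of $\det$ and on $\delta$ being a homomorphism — and in the two geometric inputs: that a vector bundle on a compact Riemann surface acquires a nonzero global section after tensoring with a high enough power of a point bundle, and that a rank-one subsheaf of a holomorphic vector bundle on a curve can be saturated to a holomorphic subbundle. Once these are granted, the cohomological bookkeeping is routine.
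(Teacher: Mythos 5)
The paper does not prove this theorem: it is quoted as a known result with a pointer to \cite{griffiths}, so there is no internal argument to compare yours against. Your proof is the standard d\'evissage and, granting the inputs you explicitly flag, it is correct: additivity of $\chi$, $\deg$ and $\rank$ on short exact sequences (using \theoref{vanish1} to truncate the long exact sequence), the skyscraper sequence $0 \to \mathcal{O}(E \otimes L_p^{-1}) \to \mathcal{O}(E) \to \C_p^{\oplus \rank E} \to 0$ to make both sides of the formula move in lockstep under twisting, the base case $\chi(\mathcal{O}_M)=1-g_M$ from Serre duality and the paper's definition of the genus, and the rank induction via a saturated line subbundle with locally free quotient. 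The two geometric facts you single out (a section appears after enough twisting because $\chi$ grows; a rank-one subsheaf of a bundle on a curve saturates to a line subbundle) are exactly the right ones and are standard on a smooth curve. One point you pass over silently: with the paper's cohomological definition of degree as the connecting map $\delta$ of the exponential sequence, the normalization $\deg L_p = 1$ for a point bundle is itself a (short but nontrivial) computation, and it is precisely what reconciles your bookkeeping of $\deg$ under twisting with the bookkeeping of $\chi$ coming from the skyscraper sequence; without it your induction proves the formula with $\deg$ replaced by the divisor-theoretic degree, and an extra argument is needed to identify the two. This is standard and does not affect the validity of the proof, but it belongs on your list of places where the real work sits, alongside the additivity of $\deg$ on extensions.
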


\section{The Grothendieck Theorem}
\label{birkgrothen}
In this section we present the splitting theorem of Grothendieck for vector bundles on the Riemann sphere.
\begin{theorem}[Grothendieck]
\label{grothendieck}
If $E$ is a rank $n$ holomorphic vector bundle on $\mathbb P^1(\C)$, then
$$E\cong\mathcal O_{\scaleto{\mathbb P^1(\C)}{7pt}}(a_1)\oplus\ldots\mathcal O_{\scaleto{\mathbb P^1(\C)}{7pt}}(a_n)$$
for some $a_i\in\Z$. Furthermore $\mathcal O_{\scaleto{\mathbb P^1(\C)}{7pt}}(a_1)\oplus\ldots\oplus\mathcal O_{\scaleto{\mathbb P^1(\C)}{7pt}}(a_n)\cong \mathcal O_{\scaleto{\mathbb P^1(\C)}{7pt}}(b_1)\oplus\ldots\mathcal \oplus \mathcal O_{\scaleto{\mathbb P^1(\C)}{7pt}}(b_{l})$ if and only if $n=l$ and up to reordering $a_i=b_i$.\end{theorem}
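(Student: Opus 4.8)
# Proof Proposal

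The plan is to prove existence by induction on the rank $n$, and uniqueness by a cohomological dimension count. For existence, the base case $n=1$ is the classification of line bundles on $\mathbb{P}^1(\mathbb{C})$: since $H^1(\mathbb{P}^1,\mathcal{O})=0$ (the genus is zero, as computed in the excerpt), the short exact sequence coming from the exponential sequence shows that $\deg\colon H^1(\mathbb{P}^1,\mathcal{O}^\times)\to H^2(\mathbb{P}^1,\mathbb{Z})\simeq\mathbb{Z}$ is an isomorphism, so every line bundle is $\mathcal{O}(k)$ for a unique $k\in\mathbb{Z}$.

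For the inductive step, suppose the result holds for bundles of rank $n-1$. Given a rank $n$ bundle $E$, the key idea is to find a line subbundle $\mathcal{O}(k)\hookrightarrow E$ with $k$ maximal possible, i.e. a maximal destabilizing sub-line-bundle. First I would argue such a maximal $k$ exists: twisting $E$ by $\mathcal{O}(-m)$ for $m$ large makes $\deg(E(-m))$ very negative, and Riemann-Roch together with Serre duality forces $H^0(\mathbb{P}^1,E(-m))=0$ for $m\gg0$, so line subbundles $\mathcal{O}(k)\hookrightarrow E$ have bounded degree; pick $k$ largest. Then one forms the quotient $Q:=E/\mathcal{O}(k)$, which is locally free of rank $n-1$ (one must check the inclusion is a subbundle, not merely a subsheaf, but maximality of $k$ gives this: if the quotient had torsion, the saturation would yield a line subbundle of strictly larger degree). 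By the inductive hypothesis $Q\cong\mathcal{O}(a_2)\oplus\cdots\oplus\mathcal{O}(a_n)$, and the extension
$$0\longrightarrow\mathcal{O}(k)\longrightarrow E\longrightarrow\mathcal{O}(a_2)\oplus\cdots\oplus\mathcal{O}(a_n)\longrightarrow 0$$
is classified by an element of $\operatorname{Ext}^1(Q,\mathcal{O}(k))=\bigoplus_{i=2}^n H^1(\mathbb{P}^1,\mathcal{O}(k-a_i))$. The crux is to show each $k-a_i\geqslant-1$, whence every summand $H^1(\mathbb{P}^1,\mathcal{O}(k-a_i))$ vanishes and the extension splits, giving $E\cong\mathcal{O}(k)\oplus Q$. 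This inequality follows from maximality of $k$: if some $a_i\geqslant k+2$, one can use a nonzero element of $H^0(\mathbb{P}^1,\mathcal{O}(a_i-k))=\operatorname{Hom}(\mathcal{O}(k),\mathcal{O}(a_i))$ together with the projection $E\to\mathcal{O}(a_i)$ — more carefully, one shows the composite sub-line-bundle pulls back to a line subbundle of $E$ of degree $\geqslant k+1$, contradicting the choice of $k$. I expect this verification — that the maximal-degree sub-line-bundle forces the splitting obstruction to vanish — to be the main obstacle, since it requires some care in manipulating the extension class and the subbundle-versus-subsheaf distinction.

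For uniqueness, suppose $\mathcal{O}(a_1)\oplus\cdots\oplus\mathcal{O}(a_n)\cong\mathcal{O}(b_1)\oplus\cdots\oplus\mathcal{O}(b_l)$, with both tuples arranged in decreasing order. Equality of ranks $n=l$ is immediate from the rank of the bundle. Comparing first Chern classes (degrees) gives $\sum a_i=\sum b_i$. To pin down the individual exponents, the clean invariant is $\dim H^0(\mathbb{P}^1,E(-m))=\sum_i\max(a_i-m+1,0)$, obtained by summing the computation in the excerpt that $\dim H^0(\mathbb{P}^1,\mathcal{O}(k))=\max(k+1,0)$. The function $m\mapsto\dim H^0(E(-m))$ is a bundle invariant, and from it one recovers the multiset $\{a_i\}$: the largest $m$ for which this dimension is positive determines $a_1$, then the slope of the piecewise-linear function recovers how many $a_i$ equal $a_1$, and so on by descending induction. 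Hence the multisets $\{a_i\}$ and $\{b_i\}$ coincide, which with the ordering convention gives $a_i=b_i$ for all $i$.
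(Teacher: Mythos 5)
Your proposal is correct and follows essentially the same route as the paper's (much briefer) sketch: induction on the rank, splitting off a line subbundle of maximal degree --- the paper normalizes it to $\mathcal O_{\mathbb P^1(\C)}$ by twisting, you keep it as $\mathcal O_{\mathbb P^1(\C)}(k)$ --- with Riemann--Roch and the vanishing of $H^1(\mathbb P^1(\C),\mathcal O_{\mathbb P^1(\C)}(d))$ for $d\geqslant -1$ forcing the extension to split, and uniqueness via the $H^0$ of negative twists. Your write-up actually supplies more of the details (boundedness of the degrees of line subbundles, the subsheaf-versus-subbundle saturation point, the lifting argument showing $a_i\leqslant k+1$) than the paper, which defers them to Hitchin's book.
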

\begin{proof}
We give just an idea of the proof. We use induction on the rank of the vector bundle $E$. The result is clearly true for line bundles since $H^1(\mathbb P^1(\C),\mathcal O_{\scaleto{\mathbb P^1(\C)}{7pt}}^\times)=\Z$. Then one shows that for large $k>>0$, $E(k):=E\otimes\mathcal O_{\scaleto{\mathbb P^1(\C)}{7pt}}(k)$ splits as
$$E(k)=\mathcal O_{\scaleto{\mathbb P^1(\C)}{7pt}}\oplus \mathcal{Q}$$
where $\mathcal{Q}$ is a vector bundle of rank $n-1$. In the proof of the splitting of $E(k)$ at crucial points the Riemann-Roch theorem is used. By induction
$$\mathcal Q\cong\mathcal O_{\scaleto{\mathbb P^1(\C)}{7pt}}(c_1)\oplus\ldots\oplus\mathcal O_{\scaleto{\mathbb P^1(\C)}{7pt}}(c_{n-1}).$$
Hence
$$E\cong\mathcal O_{\scaleto{\mathbb P^1(\C)}{7pt}}(-k)\oplus\mathcal O_{\scaleto{\mathbb P^1(\C)}{7pt}}(c_1-k)\oplus\ldots\oplus\mathcal O_{\scaleto{\mathbb P^1(\C)}{7pt}}(c_{n-1}-k).$$
 The uniqueness of the $a_i$ follows from the vanishing of $H^0(\mathbb P^1(\C),\mathcal O_{\scaleto{\mathbb P^1(\C)}{7pt}}(a))$ for $a<0$.
See \cite{hitchin} for more details.
%\flushright\qedsymbol
\end{proof}
\begin{remark}
More generally Grothendieck has classified holomorphic principal $G$-bundles $\mathscr P$ on $\mathbb{P}^1(\C)$, for a reductive complex Lie group $G$, and the vector bundles $\mathscr V$ associated to them via a representation $\mu\colon G\to \rm{GL}(V)$ on a finite dimensional complex vector space $V$. We refer to \cite[th.~1.1, th.~1.2]{grothendieck}, and  \cite{hwang} (prop. 3 and the paragraph which follows it), for more details.
\end{remark}
\subsection{Some further works in the spirit of Birkhoff-Grothendieck}
We have the following necessary and sufficient condition of Horrocks, for splitting of vector bundles on the complex projective space of dimension $n\geqslant1$, see \cite[p.~21]{okonek}.
\begin{theorem}[Horrocks]
A holomorphic vector bundle $E$ on $\mathbb P^n(\C)$, $n\geqslant1$, splits precisely when
$$H^p(\mathbb P^n, E(k))=0,$$
for $p=1,\ldots,n-1$ and for all $k\in\Z$, where $E(k)=E\otimes \mathcal{O}_{\mathbb P^n(\C)}(k)$, where we recall that the tautological line bundle $\mathcal{O}_{\mathbb P^n(\C)}(-1)$ on $\mathbb P^n(\C)$ is
$$\mathcal{O}_{\mathbb P^n(\C)}(-1)=\{(u,v)\in\mathbb P^n(\C)\times\C^{n+1}|v\in u\}$$
and for $k\in\Z$
\[\mathcal O_{\mathbb P^n(\C)}(k)=\begin{cases}
    \mathcal{O}_{\mathbb P^n(\C)}(-1)^{\otimes k}  & \text{ for $k\geqslant0$}, \\
    \mathcal{O}_{\mathbb P^n(\C)}(-1)^{\otimes |k|}  & \text{for $|k|\leqslant0$}.
\end{cases}\]
\end{theorem}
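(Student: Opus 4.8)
The plan is to establish the equivalence by treating the two implications separately; write $r=\rk(E)$. The forward implication is immediate: if $E\cong\mathcal O_{\mathbb P^n(\C)}(a_1)\oplus\cdots\oplus\mathcal O_{\mathbb P^n(\C)}(a_r)$ then $E(k)\cong\bigoplus_{i=1}^r\mathcal O_{\mathbb P^n(\C)}(a_i+k)$, and the classical computation of the cohomology of line bundles on projective space gives $H^p(\mathbb P^n(\C),\mathcal O_{\mathbb P^n(\C)}(m))=0$ for $0<p<n$ and every $m\in\Z$; hence $H^p(\mathbb P^n(\C),E(k))=0$ for $1\le p\le n-1$ and all $k$. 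The converse I would prove by induction on $n$, the base case $n=1$ being exactly \theoref{grothendieck} (the hypothesis being then vacuous).

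For the inductive step assume $n\ge2$ and fix a hyperplane $H\cong\mathbb P^{n-1}(\C)$. First I would check that $E|_H$ still satisfies the hypothesis. Tensoring the structure sequence $0\to\mathcal O_{\mathbb P^n(\C)}(k-1)\to\mathcal O_{\mathbb P^n(\C)}(k)\to\mathcal O_H(k)\to0$ by the locally free sheaf $E$ gives the exact sequence $0\to E(k-1)\to E(k)\to E|_H(k)\to0$, and in its long exact cohomology sequence the group $H^p(H,E|_H(k))$ sits between $H^p(\mathbb P^n(\C),E(k))$ and $H^{p+1}(\mathbb P^n(\C),E(k-1))$, both of which vanish for $1\le p\le n-2$ and all $k$. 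By the inductive hypothesis $E|_H\cong\bigoplus_{i=1}^r\mathcal O_H(a_i)$ for suitable integers $a_i$.

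The key point is that for $n\ge2$ the hypothesis already contains the case $p=1$, so $H^1(\mathbb P^n(\C),E(k))=0$ for every $k\in\Z$. Feeding this into the same exact sequence, the restriction map $H^0(\mathbb P^n(\C),E(-a_i))\to H^0(H,E|_H(-a_i))$ is surjective for each $i$, its cokernel embedding into $H^1(\mathbb P^n(\C),E(-a_i-1))=0$. I would then lift the canonical inclusion $\mathcal O_H(a_i)\hookrightarrow E|_H$ of the $i$-th summand — a section of $E|_H(-a_i)$ — to a section $s_i\in H^0(\mathbb P^n(\C),E(-a_i))=\mathrm{Hom}(\mathcal O_{\mathbb P^n(\C)}(a_i),E)$, and assemble the $s_i$ into a morphism $\varphi\colon\bigoplus_{i=1}^r\mathcal O_{\mathbb P^n(\C)}(a_i)\to E$ whose restriction to $H$ is, by construction, the splitting isomorphism. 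To upgrade this to an isomorphism over all of $\mathbb P^n(\C)$, observe that $\det\varphi$ is a global section of $(\det E)\otimes\mathcal O_{\mathbb P^n(\C)}(-a_1-\cdots-a_r)$; since $\mathrm{Pic}(\mathbb P^n(\C))=\Z$ and restriction of line bundles to $H$ is injective on isomorphism classes, comparing determinants on $H$ yields $\det E\cong\mathcal O_{\mathbb P^n(\C)}(a_1+\cdots+a_r)$, so $\det\varphi$ is a constant, nonzero because $\varphi|_H$ is an isomorphism. Hence $\varphi$ is fiberwise invertible, i.e.\ an isomorphism of vector bundles.

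The step that carries the real content is the lifting: the splitting of $E|_H$ alone never suffices — the null-correlation bundle on $\mathbb P^3(\C)$ restricts to a split bundle on every hyperplane yet is indecomposable — and what rescues the argument is precisely the vanishing of $H^1(\mathbb P^n(\C),E(k))$ in \emph{all} twists, which is what allows every (twisted) hyperplane section of $E$ to extend to $\mathbb P^n(\C)$. A routine point to dispatch along the way is the case of repeated values among the $a_i$: one lifts each summand inclusion individually, and independence is automatic because $\varphi|_H$ is the prescribed isomorphism.
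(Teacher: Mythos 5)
Your argument is correct and is essentially the proof the paper points to: the paper itself gives no proof of Horrocks' criterion, only the citation to Okonek--Schneider--Spindler, and the induction on $n$ via restriction to a hyperplane, the vanishing of $H^1(\mathbb P^n(\C),E(k))$ in all twists to lift the summand inclusions $\mathcal O_H(a_i)\hookrightarrow E|_H$, and the determinant trick to promote $\varphi$ to an isomorphism is exactly the standard argument given there, with the base case supplied by \theoref{grothendieck}. One factual slip in your closing aside: the null-correlation bundle on $\mathbb P^3(\C)$ does \emph{not} restrict to a split bundle on hyperplanes (its restriction to a plane has $c_1=0$, $c_2=1$, which no split rank-$2$ bundle realizes), and indeed for $n\geqslant3$ splitting on a single hyperplane already forces splitting of $E$, so no such example can exist there. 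The cautionary example you want is on $\mathbb P^2(\C)$ --- e.g.\ the tangent bundle, which is indecomposable although its restriction to every line splits by \theoref{grothendieck} --- which is precisely why the hypothesis must be the cohomological vanishing rather than splitting of hyperplane restrictions. This does not affect the validity of the proof.
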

\begin{definition}
Let $E\xrightarrow{\pi} M$ be a holomorphic vector bundle of rank $n>0$ on a complex manifold $M$. A holomorphic subbundle $F\subseteq E$ is a collection of subspaces $\{F_m\subset E_m\}_{{\scaleto{m\in M}{4pt}}}$ of the fibers $E_m=\pi^{-1}(m)$ such that $F=\cup_{{\scaleto{m\in M}{4pt}}}F_m$ is a submanifold of $E$, given by the embedding $i\colon F\hookrightarrow E$. This means that every $m\in M$ has a neighborhood $U$ and a trivialization
$$\psi_U\colon E_U\to U\times \C^n,\;E_U:={\pi^{-1}(U)},$$
such that
$$\psi_U\big|_{F_U}\colon F_U\to U\times \C^r\subset U\times\C^n,\;F_U:={(\pi\circ i)^{-1}(U)},\;r\leqslant n$$
and $i\big|_{F_m}$ is the inclusion of $F_m$ into $E_m$.
\end{definition}
\begin{definition}
A non-zero holomorphic vector bundle of rank $n>0$ on a connected complex manifold is indecomposable if it is not the direct sum of two non-zero subbundles.
\end{definition}
The following is proved in \cite{atiyah}
\begin{theorem}[Atiyah-Krull-Remak-Schmidt]
Any holomorphic vector bundle on a connected compact complex manifold $M$ is a direct sum of indecomposable subbundles. Furthermore let $E_1,\ldots,E_k$ and $F_1,\ldots,F_l$ be indecomposable holomorphic vector bundles on $M$, such that $E_1\oplus\ldots\oplus E_k$ is isomorphic to $F_1\oplus\ldots\oplus F_l$. Then $k=l$, and up to permutation of indices, $E_1,\ldots,E_k$ are isomorphic to $F_1,\ldots,F_l$, respectively.
\end{theorem}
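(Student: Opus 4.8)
The plan is to establish the Atiyah--Krull--Remak--Schmidt theorem in two stages: first the existence of a decomposition into indecomposables, then its uniqueness, the latter being the substantive part. For existence, I would argue by induction on the rank $n$ of the vector bundle $E$. If $E$ is indecomposable we are done; otherwise $E \cong F \oplus G$ with $F,G$ nonzero subbundles of strictly smaller rank, and we apply the induction hypothesis to each summand. The base case $n=1$ is trivial since a line bundle is automatically indecomposable. The only mild subtlety here is to check that a direct summand of a holomorphic vector bundle is again a holomorphic vector bundle, which is immediate from \defiref{def2} and the definition of holomorphic subbundle.

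For uniqueness, the key algebraic input is that the endomorphism ring $\End(E)$ of an indecomposable holomorphic vector bundle $E$ on a connected compact complex manifold $M$ is a (noncommutative) \emph{local} ring: its non-invertible elements form a two-sided ideal. To see this, recall from \theoref{vanish1} (applied to the vector bundle $\End(E) = E^\star \otimes E$, whose sheaf of holomorphic sections has global sections $\End(E) = H^0(M,\End(E))$) together with the compactness statement that $H^0(M,E)$ is finite dimensional, that $\End(E)$ is a finite-dimensional $\C$-algebra. A finite-dimensional algebra over $\C$ is local if and only if it has no nontrivial idempotents; and a nontrivial idempotent $e \in \End(E)$ would split $E \cong \image(e) \oplus \kernel(e)$ as holomorphic subbundles (one checks the ranks are locally constant using connectedness of $M$ and that $\image(e), \kernel(e)$ are genuine subbundles), contradicting indecomposability. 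Hence $\End(E)$ is local.

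With the local-ring property in hand, uniqueness follows from the standard Krull--Remak--Schmidt exchange argument, which I would carry out as follows. Suppose $E_1 \oplus \cdots \oplus E_k \cong F_1 \oplus \cdots \oplus F_l$ with all summands indecomposable; induct on $k$. Write $\varphi$ for the isomorphism and let $p_i \colon F_1 \oplus \cdots \oplus F_l \to F_i$, $q_j \colon E_1 \oplus \cdots \oplus E_k \to E_j$ be the projections and $\iota_i, \kappa_j$ the inclusions. Then $\sum_{i=1}^{l} (q_1 \varphi^{-1} \iota_i)(p_i \varphi \kappa_1) = \Id_{E_1}$, which is a unit in the local ring $\End(E_1)$; since the non-units form an ideal, at least one term $(q_1 \varphi^{-1} \iota_i)(p_i \varphi \kappa_1)$ is a unit of $\End(E_1)$. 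After reindexing, say $i = 1$, so $\beta \alpha$ is an automorphism of $E_1$ where $\alpha = p_1 \varphi \kappa_1 \colon E_1 \to F_1$ and $\beta = q_1 \varphi^{-1} \iota_1 \colon F_1 \to E_1$. A short diagram chase then shows $\alpha$ is a split monomorphism and $\beta$ a split epimorphism, so $E_1$ is (isomorphic to) a direct summand of the indecomposable $F_1$, forcing $\alpha \colon E_1 \xrightarrow{\sim} F_1$. One then shows $E_2 \oplus \cdots \oplus E_k \cong F_2 \oplus \cdots \oplus F_l$ — for instance by checking that $\varphi$ can be modified by an automorphism so that it carries $E_1$ isomorphically onto $F_1$ and respects the complementary summands — and invokes the induction hypothesis.

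The main obstacle is the finiteness/local-ring step: everything reduces to knowing that $\End(E)$ is a finite-dimensional $\C$-algebra (so that the "no idempotents $\Rightarrow$ local" dichotomy applies) and that idempotent endomorphisms of a vector bundle over a \emph{connected} base split it into genuine holomorphic subbundles of locally constant, hence constant, rank. The finite dimensionality is exactly \theoref{vanish1}'s companion statement ($H^0$ is finite dimensional on a compact $M$) applied to $\End(E)$; the splitting of idempotents is where connectedness of $M$ is essential and is the place I would be most careful. Once $\End(E_i)$ is known to be local, the exchange lemma is purely formal and identical to the module-theoretic proof.
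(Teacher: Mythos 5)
Your argument is correct, and it is essentially the proof of the cited reference \cite{atiyah}: the paper itself gives no proof of this statement, deferring entirely to Atiyah, whose strategy is exactly yours --- reduce the Krull--Remak--Schmidt uniqueness to the fact that the endomorphism ring of an indecomposable object is local, which follows from finite dimensionality of $\mathrm{End}(E)$ over $\C$ plus the splitting of idempotents (where connectedness gives constancy of the fibrewise rank of an idempotent, e.g.\ via its trace). The one point to flag is that the finite dimensionality of $H^0(M,E^\star\otimes E)$ is stated in the paper only for compact Riemann surfaces, whereas the theorem concerns compact complex manifolds of arbitrary dimension; there you need the Cartan--Serre finiteness theorem for coherent sheaves, not the one-dimensional statement you cite (and \theoref{vanish1} is the wrong pointer --- it is the vanishing of higher cohomology, not the finiteness of $H^0$). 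With that reference corrected, the exchange argument you outline is the standard, purely formal one and closes the proof.
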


\begin{remark}
Let $E\xrightarrow{\pi} M$ and $E^\prime\xrightarrow{\pi^\prime} M$ be two holomorphic vector bundles on the complex manifold $M$. Then $\varphi:E\to E^\prime$ is a homomorphism if and only if
\begin{center}
\tikzstyle{grisEncadre}=[dashed]
\tikzstyle{nfEncadre}=[thick]
\tikzstyle{nffEncadre}=[very thick]
\tikzstyle{ddEncadre}=[densely dotted]
\tikz \node [scale=1, inner sep=0] {
\begin{tikzcd}
	&E\arrow{d}{\pi}\arrow["{\varphi}"]{r}{}& E^\prime\arrow[dl,"{\pi^\prime}" description]\\
	&M\end{tikzcd}};
	\end{center}
is commutative.
\end{remark}
\begin{remark}
Atiyah has given the complete classification of vector bundles on elliptic curves, by describing precisely the set of isomorphy classes of indecomposable vector bundles of rank $r\geqslant1$ and of degree $d$ on an elliptic curve. This is sufficient according to Krull-Remak-Schmidt decomposition. We refer to \cite{atiyah1} for further details.
\end{remark}
\section{Riemann-Hilbert problem and Birkhoff's theorem}
\label{RH}
We explain in this section the basics of the theory of linear homogeneous ordinary differential equations or systems on a Riemann surface $M$, then we explain the problem of Riemann-Hilbert in classical terms, finally we give a presentation of the Birkhoff theorem and the Birkhoff factorization theorem, which will imply the Grothendieck theorem \theoref{grothendieck}, following \cite{sibuya}. The two theorems are in fact equivalent. Our inspiration in this section are \cite{singer, sibuya, andre, deligne, katz, forster}. For a nice survey on the origins of problems of Riemann-Hilbert type, we refer to \cite{bothner, rohrl1, zverovich}.
\begin{definition}
A linear differential equation, with meromorphic coefficients on a Riemann surface $M$, is an expression
\begin{equation}
\label{markus1}
w^{(n)}+a_{n-1}(z)w^{(n-1)}+\cdots+a_0(z)w=0, \qquad w^{(k)}:=\dfrac{d^kw}{dz^k},\;0\leqslant k\leqslant n
\end{equation}
such that
\begin{enumerate}
\item for each local coordinate system $(z)$ on $M$ there is prescribed such a linear differential equation with meromorphic coefficients in $(z)$.
\item On the overlap of two coordinate systems $(z)$ and $(\zeta)$, the prescribed $n$-th order linear differential equations have the same holomorphic solutions on each open subset of $(z)\cap(\zeta)$. 
\end{enumerate}
\end{definition}
\begin{definition}
A  linear differential equation \eqref{markus1}:
$$w^{(n)}+a_{n-1}(z)w^{(n-1)}+\cdots+a_0(z)w=0,$$
with $a_k(z)$ having poles at $z=0$ (centered chart), is called of the first kind if $b_{n-k}(z):=z^ka_{n-k}(z)$, $k=1,2,\ldots,n$ are all holomorphic at $z=0$. The highest order pole among the $b_{n-k}(z)=z^ka_{n-k}(z)$ is called the rank of \eqref{markus1}. When every singularity of \eqref{markus1} is of the first kind, one says that \eqref{markus1} is of the first kind.
\end{definition}
\begin{remark}
In local coordinates $\zeta$, with $z=z(\zeta)$, the coefficient of $\dfrac{d^{n-k}w}{d\zeta^{n-k}}$ is a polynomial of the derivatives of $\zeta(z)$, the coefficients $a_{n-1}(z(\zeta)),\ldots,a_{n-k+1}(z(\zeta))$, $a_{n-k}(z(\zeta))\left(\dfrac{d\zeta}{dz}\right)^{n-k}$, all divided by $\left(\dfrac{d\zeta}{dz}\right)^n$. Thus the property of \eqref{markus1} having a singularity of the first kind at a point does not depend on the local coordinates. Also the rank of \eqref{markus1} at a point $P$ is intrinsic.
\end{remark}
\begin{definition}
A linear homogeneous system, with meromorphic coefficients on a Riemann surface $M$, is given by an expression
\begin{equation}
\label{markus2}
\dfrac{d w^i}{dz}=A_j^i(z)w^j,\qquad i,j=1,2,\ldots,n
\end{equation}
such that 
\begin{enumerate}
\item For each local coordinate system $(z)$ a meromorphic matrix $A_j^i(z)$ is prescribed.
\item On the intersection of two coordinate systems $(z)\cap(\zeta)$, the prescribed homogeneous differential systems have the same holomorphic solution functions.
\end{enumerate}
\end{definition}
\begin{remark}
The poles, and their orders, for the coefficients $A_j^i(z)$ are intrinsic. Furthermore
$$\dfrac{dw^i}{d\zeta}=A^i_j(z(\zeta))\dfrac{dz(\zeta)}{d\zeta}w^j.$$
\end{remark}
\begin{definition}
If $A(z)$ has a pole of first order at $z=0$ then one says that the system \eqref{markus2} has a singularity of the first kind there. Otherwise \eqref{markus2} has a singularity of the second kind. The highest order pole in $zA(z)$ is the rank of \eqref{markus2}. Hence at a point $P$, the system \eqref{markus2} is analytic, or has a singularity of rank $\mu=0,1,2,\ldots$. If $\mu=0$, then $P$ is a first kind singularity. The rank does not depend on the coordinate system around $P$. 
\end{definition}
For the following definition one may consult \cite[chap.~5]{singer}.
\begin{pdefinition}
Let \eqref{markus1} or \eqref{markus2} be a linear homogenous differential equation (or first order system), with meromorphic coefficients on a Riemann surface $M$. Let $\mathscr S$ be the (isolated) singularities of \eqref{markus1} or \eqref{markus2} in $M$. Then at each ordinary, or non-singular point $P$, the solution family is holomorphic in a neighborhood of $P$ and forms a complex vector space of dimension $n$. Let $W(z)$ be the associated fundamental matrix of solutions. Consider the set $\Gamma$ of all closed loops in $M\setminus \mathscr S$, based at $P$. Each such loop $\gamma$ defines a linear transformation of the solution space onto itself, by analytic continuation. This means that there exists a matrix $\mathfrak M\in\rm{GL}(n,\C)$ such that
$$\gamma_\ast W(z)=W(z)\mathfrak M,$$
where $\ast$ denotes the operation of analytic continuation of the fundamental matrix $W(z)$ along the loop $\gamma$. The matrix $\mathfrak M$ depends only on the homotopy equivalence class $\left[\gamma\right]$ of $\gamma$ in $M\setminus \mathscr S$. We set $\mathfrak M=\mathfrak M_{\left[\gamma\right]}$. This gives a map
\begin{equation}
\label{markus3}
\begin{split}
\Psi\colon\pi_1(M\setminus \mathscr S,P)&\to \rm{GL}(n,\C)\\
&\left[\gamma\right]\mapsto \mathfrak M_{\left[\gamma\right]}.
\end{split}
\end{equation}
Now recall that the product of two loops $\gamma_1$ and $\gamma_2$, based at $P$, is the loop gotten by traversing first $\gamma_1$ then $\gamma_2$, in this order. In terms of homotopy classes we thus have $\left[\gamma_1\gamma_2\right]=\left[\gamma_1\right]\left[\gamma_2\right]$. Now we have
\begin{equation*}
\begin{split}
(\gamma_1\gamma_2)_\ast W(z)&=(\gamma_2)_\ast((\gamma_1)_\ast W(x))\\
&=(\gamma_2)_\ast W(x)\mathfrak M_{\left[\gamma_1\right]}\\
&=W(x)\mathfrak M_{\left[\gamma_2\right]}\mathfrak M_{\left[\gamma_1\right]},
\end{split}
\end{equation*}
which implies
$$\mathfrak M_{\left[\gamma_1\gamma_2\right]}=\mathfrak M_{\left[\gamma_2\right]}\mathfrak M_{\left[\gamma_1\right]}.$$
Thus
\begin{equation}
\label{markus4}
\Psi(\left[\gamma_1\gamma_2\right])=\Psi(\left[\gamma_2\right])\Psi(\left[\gamma_1\right]).
\end{equation}
Therefore $\Psi$ is an antihomomorphism of groups, called the monodromy representation of the homogeneous linear differential equation \eqref{markus1}, or system \eqref{markus2}. The image of $\Psi$ which is a subgroup of $\rm{GL}(n,\C)$, is called the monodromy group of \eqref{markus1} or \eqref{markus2}. 
\end{pdefinition}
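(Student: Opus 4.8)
The plan is to establish, in the following order: (i) that in a neighbourhood of the ordinary point $P$ the solutions of \eqref{markus1} (resp.\ \eqref{markus2}) form an $n$-dimensional $\C$-vector space; (ii) that analytic continuation along a loop $\gamma$ based at $P$ carries a fundamental matrix to a fundamental matrix, so that $\gamma_\ast W(z)=W(z)\,\mathfrak M_{[\gamma]}$ for a well-defined $\mathfrak M_{[\gamma]}\in\mathrm{GL}(n,\C)$; (iii) that $\mathfrak M_{[\gamma]}$ depends only on the homotopy class of $\gamma$, so that $\Psi$ of \eqref{markus3} is well defined on $\pi_1(M\setminus\mathscr S,P)$; and (iv) the antihomomorphism identity \eqref{markus4}.

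For (i) I would invoke the classical Cauchy existence theorem for linear holomorphic differential equations: on a coordinate disk centred at $P$ on which all the $a_k(z)$ (resp.\ all entries of $A^i_j(z)$) are holomorphic, prescribing the initial data --- the values $w(P),w'(P),\dots,w^{(n-1)}(P)$ for \eqref{markus1}, or the vector $w(P)\in\C^n$ for \eqref{markus2} --- determines a unique holomorphic solution on that disk. The assignment sending a solution to its initial data at $P$ is then a $\C$-linear isomorphism onto $\C^n$, so the solution space is $n$-dimensional; taking as columns the solutions attached to the standard basis of initial data yields a fundamental matrix $W(z)$, whose determinant (the Wronskian) is nowhere zero on the disk by Abel's identity.

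For (ii) and (iii) I would appeal to the monodromy theorem for linear differential equations. Since $\mathscr S$ is a discrete set, $M\setminus\mathscr S$ is a connected Riemann surface on which the coefficients are holomorphic, so every germ of solution at $P$ can be analytically continued along every path in $M\setminus\mathscr S$, and the germ obtained at the end depends only on the homotopy class of the path relative to its endpoints; applied to loops based at $P$, this gives (iii) and shows that the columns of $\gamma_\ast W$ are again solutions of the same system. On the disk around $P$ the matrix $W(z)^{-1}\bigl(\gamma_\ast W\bigr)(z)$ is then holomorphic, and since both factors solve the same linear system its derivative vanishes, so it equals a constant matrix $\mathfrak M_{[\gamma]}$. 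Its invertibility follows because the continuation of a fundamental matrix is again a fundamental matrix: the Wronskian of $\gamma_\ast W$ satisfies a scalar first-order linear equation, hence is either identically zero or nowhere zero, and the former is impossible since continuing $\gamma_\ast W$ back along $\gamma^{-1}$ recovers $W$.

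For (iv) the computation is the one displayed in the statement: continuation along the concatenation $\gamma_1\gamma_2$ (first $\gamma_1$, then $\gamma_2$) equals $(\gamma_2)_\ast\circ(\gamma_1)_\ast$, and $(\gamma_2)_\ast$ is $\C$-linear, hence commutes with right multiplication by the constant matrix $\mathfrak M_{[\gamma_1]}$; therefore $(\gamma_1\gamma_2)_\ast W=(\gamma_2)_\ast\bigl(W\,\mathfrak M_{[\gamma_1]}\bigr)=\bigl((\gamma_2)_\ast W\bigr)\mathfrak M_{[\gamma_1]}=W\,\mathfrak M_{[\gamma_2]}\mathfrak M_{[\gamma_1]}$, that is $\mathfrak M_{[\gamma_1\gamma_2]}=\mathfrak M_{[\gamma_2]}\mathfrak M_{[\gamma_1]}$, which is \eqref{markus4}; the image of $\Psi$ is a subgroup of $\mathrm{GL}(n,\C)$ because the image of an antihomomorphism is a subgroup. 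I expect the only real subtlety to be the careful use of the monodromy theorem --- ensuring that continuation is unobstructed along every path in $M\setminus\mathscr S$ (so that homotopy invariance applies) and that a continued fundamental matrix remains fundamental, with the connecting matrix constant precisely because the domain around $P$ is connected; the rest is the linear-algebra bookkeeping already carried out in the statement.
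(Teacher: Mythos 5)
Your proposal is correct and follows essentially the same route as the paper: the paper's justification consists of the inline composition computation $(\gamma_1\gamma_2)_\ast W=W\,\mathfrak M_{[\gamma_2]}\mathfrak M_{[\gamma_1]}$ (which you reproduce verbatim, with the linearity of $(\gamma_2)_\ast$ made explicit) together with an appeal to the standard existence, continuation, and homotopy-invariance facts from \cite[chap.~5]{singer}, which you simply spell out via the Cauchy theorem, the monodromy theorem, and the Wronskian argument. No gaps; the added detail on why $W^{-1}(\gamma_\ast W)$ is a constant invertible matrix is a welcome elaboration of what the paper leaves implicit.
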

\begin{remark}
The monodromy group of \eqref{markus1} or \eqref{markus2}, based at $P$ is represented by a subgroup of matrices of $\rm{GL}(n,\C)$ once a basis has been chosen for the solution near $P$. A change of solution basis changes the monodromy group $\mu\subset \rm{GL}(n,\C)$ of \eqref{markus1} or \eqref{markus2} to a conjugate subgroup $C\mu C^{-1}$, for a fixed $C\in\rm{GL}(n,\C)$. 

For a change of the base point $P$ to $P^\prime$ inside $M\setminus \mathscr S$, the associated monodromy group are isomorphic (but not in a canonical way). For a fixed choice of basis of solutions at $P$ and $P^\prime$, and a fixed isomorphism between the abstract monodromy groups given by a curve in $M\setminus \mathscr S$ joining $P$ and $P^\prime$, the monodromy representations are conjugate. 
\end{remark}
\subsection{Fuchsian Differential equations on Riemann surfaces}
\begin{definition}
If the system \eqref{markus2}:
$$\dfrac{d w^i}{dz}=A_j^i(z)w^j,\qquad i,j=1,2,\ldots,n$$
where $A(z):=(A_j^i(z))$ has an isolated pole at $z=0$, has a solution matrix $W(z)=S(z)z^Q$, where $S(z)$ is single-valued and meromorphic at $z=0$ and $Q$ is a constant matrix, then one says that \eqref{markus2} has a regular singularity or moderate growth at $z=0$. Otherwise \eqref{markus2} has an irregular singularity at $z=0$.
\end{definition}
\begin{definition}
A homogeneous linear differential system \eqref{markus2} with meromorphic coefficients, on a Riemann surface $M$ is called regular singular in case every one of its singularities is a regular one, and it is called Fuchsian in case each one of its singular points is a simple pole (singularity of the first kind). 
\end{definition}
\begin{theorem}[Sauvage, \cite{sauvage}]
Let \eqref{markus2} be a homogeneous linear differential system with a singularity of the first kind at $z=0$, that is $A(z)$ has a simple pole at $z=0$. Then \eqref{markus2} has a regular singularity at $z=0$. More generally if \eqref{markus2} has only singularities of the first kind on $M$, then each of its singularities is a regular singular one.
\end{theorem}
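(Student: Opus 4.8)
The plan is to construct, for the local system at $z=0$, a fundamental matrix of exactly the shape required by the definition of a regular singularity — that is, $W(z)=S(z)z^{Q}$ with $S$ single-valued and meromorphic at $0$ and $Q$ a constant matrix — and then to deduce the global statement pointwise. The crucial first observation is that a \emph{simple} pole forces the solutions to grow at most polynomially as $z\to 0$. Write the system as $\tfrac{dW}{dz}=A(z)W$ with $zA(z)=A_{0}+A_{1}z+\cdots$ holomorphic on a disc $|z|<\rho$, fix a branch of the fundamental matrix $W$ on the slit disc $\{0<|z|<\rho,\ |\arg z|<\pi\}$, and integrate along a ray towards the origin. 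Since $\|A(z)\|\le c/|z|$ on this sector, the differential inequality $\tfrac{d}{dt}\|W(te^{i\theta})\|\le \tfrac{c}{t}\|W(te^{i\theta})\|$ together with a Grönwall-type estimate gives $\|W(z)\|\le C_{1}|z|^{-\mu}$ there, the bound for $\|W\|$ on the arc $|z|=\rho/2$ being finite by continuity. (An alternative, more in the spirit of Sauvage's original argument, would be to reduce the system by a meromorphic gauge transformation to the normal form $\tfrac{dv}{dz}=\tfrac{B}{z}v$ with $B$ constant via a finite sequence of shearing transformations handling the resonances among the eigenvalues of $A_{0}$, and then to prove convergence of the resulting power series; I prefer the growth estimate as it avoids the bookkeeping of resonances.)

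Next I isolate the single-valued part. Let $\mathfrak{M}\in\mathrm{GL}(n,\C)$ be the monodromy matrix around $0$, so that analytic continuation sends $W\mapsto W\mathfrak{M}$, and choose $Q=\tfrac{1}{2\pi i}\log\mathfrak{M}$, which is possible because $\mathfrak{M}$ is invertible; then continuation sends $z^{-Q}\mapsto \mathfrak{M}^{-1}z^{-Q}$. Hence $S(z):=W(z)z^{-Q}$ is invariant under continuation around $0$, i.e. single-valued and holomorphic on the punctured disc. It remains to show $S$ is meromorphic at $0$. On the slit disc one has the elementary bound $\|z^{-Q}\|\le C_{2}|z|^{-\sigma}\bigl(1+\log\tfrac{1}{|z|}\bigr)^{d}\le C_{3}|z|^{-\sigma-1}$, where $\sigma$ is the largest real part of an eigenvalue of $Q$ and $d$ is a non-negative integer governed by the Jordan structure of $Q$. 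Combining this with the growth bound on $W$ gives $\|S(z)\|\le C_{4}|z|^{-K}$ on the sector, with $K=\mu+\sigma+1$; since $S$ is single-valued, this bound holds on the whole punctured disc. Therefore every entry of $z^{\lceil K\rceil+1}S(z)$ is bounded near $0$ and extends holomorphically by Riemann's removable-singularity theorem, so $S$ is meromorphic at $0$. Consequently $W(z)=S(z)z^{Q}$ displays $z=0$ as a regular singularity, proving the first assertion.

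For the general statement, recall from the remarks preceding the theorem that the location of the poles of the coefficient matrix, and their orders, are intrinsic to the system on $M$. Hence if every point of $\mathscr{S}$ is a singularity of the first kind, then in a local coordinate centred at any such $P$ the system has a simple pole, and the first part shows it is regular singular at $P$. Since this holds at each point of $\mathscr{S}$, the system is regular singular on $M$.

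The main obstacle — and really the heart of the matter — is the polynomial growth estimate: it is precisely here that the hypothesis of a first-kind (simple pole) singularity enters, since a pole of order $r\ge 2$ would only yield a bound of the form $\exp(c/|z|^{\,r-1})$, which is exactly why systems with higher-order poles need not be regular singular. The remaining steps — existence of $\log\mathfrak{M}$, the bound on $\|z^{-Q}\|$, and the removable-singularity argument — are routine, the only subtlety being to pass correctly from a sectorial bound on the multivalued $W$ to a bound on the single-valued $S$ valid on the entire punctured disc.
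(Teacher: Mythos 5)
Your proof is correct: the paper itself gives no argument for this theorem, only a citation to Coddington--Levinson (th.~2.1, p.~111) and Sauvage, and your proof is precisely the standard argument from that reference --- the Gr\"onwall-type estimate $\|W(z)\|\le C|z|^{-\mu}$ along rays in a slit disc, the choice $Q=\tfrac{1}{2\pi i}\log\mathfrak M$ to make $S(z)=W(z)z^{-Q}$ single-valued, and the removable-singularity argument showing $S$ is meromorphic at $0$. Your closing remark correctly identifies where the simple-pole hypothesis enters, and the reduction of the global statement to the local one via the coordinate-invariance of pole orders matches the remarks preceding the theorem in the paper.
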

\begin{proof}
See \cite[th.~2.1, p.~111]{coddington}, \cite{sauvage}.
\end{proof}
We have \cite[p.~124]{coddington}
\begin{definition}
The differential equation \eqref{markus1}
$$w^{(n)}+a_{n-1}(z)w^{(n-1)}+\cdots+a_0(z)w=0$$
with meromorphic coefficients at $z=0$ has a regular singularity there, when every solution near $z=0$ can be expressed as a finite (constant) linear combination of terms of the form $z^r(\log z)^kp(z)$, where $r$ is a complex number, $k$ is an integer $0\leqslant k\leqslant n-1$ and $p(z)$ is analytic at $z=0$ with $p(0)\not=0$. Otherwise \eqref{markus1} has an irregular singularity at $z=0$.
\end{definition}
\begin{definition}
A linear homogeneous differential equation \eqref{markus1}
$$w^{(n)}+a_{n-1}(z)w^{(n-1)}+\cdots+a_0(z)w=0$$
has a singularity of the first kind at $z=0$ when each $b_{n-k}(z):=z^ka_{n-k}(z)$ is analytic at $z=0$. More generally if \eqref{markus1} has only singularities of the first kind on $M$, then it is called Fuchsian. \end{definition}
For the next two theorems one may consult \cite[th.~5.1, th.~5.2]{coddington}
\begin{theorem}
If a linear homogeneous differential equation \eqref{markus1}
$$w^{(n)}+a_{n-1}(z)w^{(n-1)}+\cdots+a_0(z)w=0$$
has a singularity of the first kind at $z=0$ (that is $b_{n-k}(z):=z^ka_{n-k}(z)$ is analytic at $z=0$), then \eqref{markus1} has a regular singularity at $z=0$.
\end{theorem}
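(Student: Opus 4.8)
The plan is to reduce the scalar equation \eqref{markus1} to a first order linear system with a singularity of the first kind and then invoke Sauvage's theorem. The point to be careful about at the outset is that the naive companion system in the unknowns $w, w', \dots, w^{(n-1)}$ is \emph{not} of the first kind: its coefficient matrix has a pole at $z=0$ of order up to $n$, coming from $a_0(z)$. The remedy is a shearing change of unknowns.

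First I would set $v_k := z^k w^{(k)}$ for $k = 0, 1, \dots, n-1$, so that in particular $v_0 = w$. Differentiating gives, for $0 \le k \le n-2$, the relation $v_k' = \tfrac{k}{z} v_k + \tfrac{1}{z} v_{k+1}$, while for $k = n-1$ one has
\[
v_{n-1}' = \frac{n-1}{z}\, v_{n-1} + z^{n-1} w^{(n)} = \frac{n-1}{z}\, v_{n-1} - \frac{1}{z}\sum_{j=0}^{n-1} b_j(z)\, v_j ,
\]
where the last equality uses $w^{(n)} = -\sum_{j=0}^{n-1} a_j(z) w^{(j)}$ together with the hypothesis that $b_j(z) := z^{\,n-j} a_j(z)$ is holomorphic at $z=0$ (this is the definition of singularity of the first kind for \eqref{markus1}, written with the index substitution $j = n-k$ in $b_{n-k} = z^k a_{n-k}$). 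Hence $v = (v_0, \dots, v_{n-1})$ solves a system of the form \eqref{markus2}, namely $\tfrac{dv}{dz} = \tfrac{1}{z} B(z)\, v$ with $B$ a matrix holomorphic at $z=0$; that is, this system has a simple pole, i.e.\ a singularity of the first kind, at $z=0$.

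By Sauvage's theorem the system just constructed has a regular singularity at $z=0$, so it admits a solution matrix $V(z) = S(z)\, z^{Q}$ with $S$ single-valued and meromorphic at $z=0$ and $Q \in \mathrm{M}(n,\C)$ constant. Putting $Q$ in Jordan form, the entries of $z^{Q}$ are finite $\C$-linear combinations of functions $z^{\lambda}(\log z)^m$ with $0 \le m \le n-1$, the $\log$-powers being truncated by the nilpotent part of each Jordan block (of size $\le n$). Multiplying by the meromorphic entries of $S(z)$ and reading off the first component $v_0 = w$, every solution of \eqref{markus1} near $z=0$ is a finite linear combination of terms $z^{\lambda - N}(\log z)^m g(z)$ with $g$ holomorphic at $z=0$; factoring out the leading power of $g$, each such term is of the form $z^{r}(\log z)^{k} p(z)$ with $p$ holomorphic at $z=0$, $p(0)\ne 0$, and $0 \le k \le n-1$. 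This is precisely the defining property of a regular singularity of \eqref{markus1} at $z=0$.

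The only genuine content is the shearing substitution $v_k = z^k w^{(k)}$, which converts a first-kind scalar equation into a first-kind system; everything after that is formal, relying on Sauvage's theorem. The step demanding the most care is the bookkeeping in the computation of $v_{n-1}'$: one must check that the $z^{n-1} w^{(n)}$ term contributes only a simple pole, and this is exactly the place where the hypothesis $b_{n-k} = z^k a_{n-k}$ holomorphic at $z=0$ is used — it would fail for the unsheared companion system, which is why the change of unknowns is essential.
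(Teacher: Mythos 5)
Your proof is correct. The paper itself offers no proof of this theorem --- it only refers the reader to Coddington--Levinson --- and your argument (the shearing substitution $v_k = z^k w^{(k)}$ turning a first-kind scalar equation into a first-kind system, followed by Sauvage's theorem and the Jordan-form analysis of $z^{Q}$ to recover the $z^{r}(\log z)^{k}p(z)$ form with $0\leqslant k\leqslant n-1$) is exactly the standard one found in that reference, with the key point --- that the naive companion system fails to have a simple pole and must be sheared --- correctly identified and handled.
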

\begin{theorem}[Fuchs, \cite{katz, fuchs1, fuchs2}]
A linear homogeneous differential equation 
$$w^{(n)}+a_{n-1}(z)w^{(n-1)}+\cdots+a_0(z)w=0$$
with meromorphic coefficients on a Riemann surface M, has only singularities of the first kind if and only if it has only regular singularities. 
\end{theorem}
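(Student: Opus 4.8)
\emph{Proof proposal.} I would prove the two implications separately, and reduce everything to a local statement near one singular point, say $z=0$ in a centered chart. The implication ``only singularities of the first kind $\Rightarrow$ only regular singularities'' is immediate: it is exactly the theorem stated just above, applied at each singular point. So the real content is the converse: if $z=0$ is a regular singular point of \eqref{markus1}, then $z=0$ is a singularity of the first kind, and it suffices to do this at each singular point.

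First I would put the equation into Euler--operator normal form. With $\theta=z\,\tfrac{d}{dz}$ one has $z^{j}w^{(j)}=\theta(\theta-1)\cdots(\theta-j+1)\,w$, so multiplying \eqref{markus1} by $z^{n}$ gives
\begin{equation*}
\theta(\theta-1)\cdots(\theta-n+1)\,w+\sum_{k=1}^{n} b_{n-k}(z)\,\theta(\theta-1)\cdots(\theta-(n-k)+1)\,w=0,\qquad b_{n-k}(z):=z^{k}a_{n-k}(z),
\end{equation*}
and the $b_{n-k}$ are precisely the functions entering the definition of ``first kind''. So I must show: if $z=0$ is regular singular, then every $b_{n-k}$ is holomorphic at $0$, i.e. $a_{n-k}$ has a pole of order at most $k$ there.

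The key is to recover the coefficients from a fundamental system by Cramer's rule. Pick solutions $w_{1},\dots,w_{n}$ forming a basis near a non-singular point and continue them to a punctured neighbourhood of $0$. Writing the equation as the vanishing of the $(n+1)\times(n+1)$ determinant with first column $(w,w',\dots,w^{(n)})^{\mathsf t}$ and the remaining columns $(w_{j},w_{j}',\dots,w_{j}^{(n)})^{\mathsf t}$, cofactor expansion along the first column yields $a_{n-k}(z)=(-1)^{k}\,\Delta_{k}(z)/W(z)$, where $W$ is the Wronskian of $w_{1},\dots,w_{n}$ and $\Delta_{k}$ is the $n\times n$ determinant of $\big(w_{j}^{(i)}\big)$ with $i$ running over $\{0,1,\dots,n\}\setminus\{n-k\}$. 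Since $z=0$ is regular singular, by definition each $w_{j}$ is, in a sector at $0$, a finite linear combination of terms $z^{\rho}(\log z)^{m}p(z)$ with $p$ analytic and $p(0)\neq0$; hence so is each $w_{j}^{(i)}$, with the power of $z$ lowered by $i$. A classical determinant computation (see \cite{coddington}) then gives $\operatorname{ord}_{0}W=\sum_{j}\rho_{j}-\binom{n}{2}$ and $\operatorname{ord}_{0}\Delta_{k}=\sum_{j}\rho_{j}-\binom{n}{2}-k$, where $\rho_{1},\dots,\rho_{n}$ are the characteristic exponents (with multiplicity) and ``$\operatorname{ord}_{0}$'' is read up to bounded powers of $\log z$. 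Subtracting, $z^{k}a_{n-k}(z)=(-1)^{k}z^{k}\Delta_{k}/W$ is bounded in a punctured neighbourhood of $0$; being moreover single-valued (it is a coefficient of a meromorphic equation) and meromorphic, it is holomorphic at $0$. This is the assertion to prove, and applying it at every singular point finishes the argument.

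The main obstacle is exactly the order estimate for $W$ and $\Delta_{k}$ when characteristic exponents coincide and genuine $\log$ terms occur, for then the ``leading term'' of $w_{j}^{(i)}$ is no longer a constant multiple of $z^{\rho_{j}-i}$; one handles this by choosing the fundamental system adapted to the Jordan decomposition of the monodromy matrix and tracking the highest power of $\log z$, or by an induction on $n$. I would also stress that this proof is special to scalar equations: it uses the companion structure and Cramer's rule in an essential way, and the analogous statement fails for first--order systems (a regular singular system need not have a simple pole), which is precisely why ``regular singular'' and ``Fuchsian'' are distinguished for systems in the definitions above.
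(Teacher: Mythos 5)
The paper itself offers no proof of this theorem: it simply refers to \cite[th.~5.1, th.~5.2]{coddington}, where the nontrivial direction is proved by induction on $n$ via reduction of order ($w=w_1\!\int v$) --- one of the two repairs you mention in your last paragraph. Your treatment of the forward implication is fine (it is indeed the preceding theorem applied at each singular point), and your determinantal setup for the converse, $a_{n-k}=\pm\Delta_k/W$ followed by an order count, is the classical Fuchs argument as found in Ince. The upper bound $|\Delta_k|=O\bigl(|z|^{\sum_j\operatorname{Re}\rho_j-\binom{n}{2}-k-\epsilon}\bigr)$ is unproblematic: expand the determinant over permutations and note that the deleted row forces $\sum_j i_j=\binom{n}{2}+k$.

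The gap is where you flag it, but it is more serious than a bookkeeping issue about powers of $\log z$: the \emph{lower} bound on $|W|$, i.e.\ the identity $\operatorname{ord}_0W=\sum_j\rho_j-\binom{n}{2}$, is false for a general fundamental system even when no logarithms occur. Take $w_1=z$, $w_2=z+z^2$, a perfectly good basis of solutions of $z^2w''-2zw'+2w=0$: both have leading exponent $1$, so your formula predicts $\operatorname{ord}_0W=1$, yet $W=z^2$. The monodromy here is trivial, so \emph{every} basis is adapted to the Jordan decomposition of the monodromy matrix; Jordan adaptation alone therefore does not rescue the estimate. What is needed is a basis in which, within each monodromy eigenvalue class, the actual leading exponents are echeloned (strictly increasing modulo the log filtration), so that the lowest-order term in the permutation expansion of $W$ is a single nonvanishing product; constructing such a basis and verifying the non-cancellation is essentially the whole technical content of the hard direction, and is roughly the same work as the reduction-of-order induction. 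As written, the proposal proves only that $z^ka_{n-k}$ is meromorphic and single-valued --- which is already part of the hypothesis --- and defers the pole-order bound to the unproven Wronskian estimate. Your closing remark that the statement is special to scalar equations and fails for first-order systems is correct and worth keeping.
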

\begin{definition}
Consider the differential system \eqref{markus2}, with singularity of the first kind at $z=0$, given in matrix form by
$$\dfrac{d w}{dz}=\left(\dfrac{R}{z}+\sum_{m\geqslant0}R_mz^mw\right),$$
with constant matrices $R$, $R_m$. The eigenvalues of $R$ are called the exponents of \eqref{markus2} at $z=0$ and the sum of these exponents is the trace of $R$, $\rm{Tr}(R)$. For an equation \eqref{markus1} with singularity of the the first kind at $z=0$, the exponents at $z=0$ are the roots of the indicial equation
$$\rho(\rho-1)\ldots(\rho-n+1)+b_{n-1}(0)\rho(\rho-1)\ldots(\rho-n+2)+\ldots+b_0(0)$$
and the sum of the exponents is
$$-b_{n-1}(0)+\dfrac{n(n-1)}{2}.$$
\end{definition}
\begin{remark}
The matrix $R$, and thus its eigenvalues are independent of the choice of local coordinates. One also shows that the $b_{n-k}(0)$ are invariant.
Similarly each of the the exponents $\rho_1$, $\rho_2$, $\ldots$, $\rho_n$ (possibly multiple) is invariant.
\end{remark}
\begin{theorem}
\label{levinson}
Consider the system
$$w^\prime=(z^{-1}R+\sum_{k\geqslant0}R_kz^k)w$$
with $R\not=0$ and $R_k$ constant $n\times n$ matrices, which is assumed to have a singularity of the first kind at $z=0$. If $R$ has eigenvalues which do not differ by positive integers, then there exists a fundamental matrix $W$ of the form
$$W(z)=S(z)z^R\quad (0<|z|<c,\;\;c>0)$$ 
where $S(z)$ is the convergent power series $S(z)=\displaystyle\sum_{k=0}^\infty S_kz^k$, with $S_0=\identity$. Any other fundamental matrix is of the form $S(z)z^R\mathscr K$, with $\mathscr K\in\rm{GL}(n,\C)$.
\end{theorem}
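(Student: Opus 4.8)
The plan is to look for the fundamental matrix directly in the asserted form $W(z)=S(z)z^R$ with $S(z)=\sum_{k\geq 0}S_kz^k$ and $S_0=\identity$, and to pin down the coefficients $S_k$ by a recursion. Since $R$ commutes with $z^R=e^{R\log z}$, we have $\frac{d}{dz}z^R=z^{-1}Rz^R$; substituting $W=S(z)z^R$ into the system and cancelling $z^R$ on the right turns the differential equation into
$$zS'(z)=RS(z)-S(z)R+\Bigl(\sum_{k\geq 0}R_kz^{k+1}\Bigr)S(z).$$
Matching coefficients of $z^m$, the $m=0$ equation reads $RS_0=S_0R$ (which holds since $S_0=\identity$), and for $m\geq 1$ one gets
$$(R-mI)S_m-S_mR=-\sum_{j=0}^{m-1}R_jS_{m-1-j},$$
a Sylvester equation whose right-hand side involves only $S_0,\dots,S_{m-1}$.

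Next I would invoke the classical criterion that the Sylvester operator $X\mapsto AX-XB$ is invertible exactly when $A$ and $B$ have disjoint spectra. Here $A=R-mI$ has eigenvalues $\lambda_i-m$ and $B=R$ has eigenvalues $\lambda_j$, where $\lambda_1,\dots,\lambda_n$ denote the eigenvalues of $R$; these sets are disjoint for every $m\geq 1$ precisely because, by hypothesis, no two eigenvalues of $R$ differ by a positive integer. Hence each $S_m$ is uniquely determined, and the formal series $S(z)$ exists and is unique with $S_0=\identity$.

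The substantive step is convergence. Write the Sylvester operator appearing in the recursion as $\mathcal L_m(X)=(R-mI)X-XR=\mathcal L_0(X)-mX$, where $\mathcal L_0(X)=RX-XR$ is a fixed bounded operator on $n\times n$ matrices. For $m>2\|\mathcal L_0\|$ a Neumann series gives $\|\mathcal L_m^{-1}\|\leq 2/m$, so from $S_m=-\mathcal L_m^{-1}\bigl(\sum_{j=0}^{m-1}R_jS_{m-1-j}\bigr)$ we obtain $\|S_m\|\leq \frac{2}{m}\sum_{j=0}^{m-1}\|R_j\|\,\|S_{m-1-j}\|$ for $m$ large. Because the series $\sum_k R_kz^k$ defining the system converges near $0$, there are constants $K,t>0$ with $\|R_k\|\leq Kt^k$; feeding this in and arguing by induction against a geometric majorant $\|S_m\|\leq Ms^m$ (take $s>t$ large and $M$ large enough to absorb the finitely many small $m$), the $1/m$ factor makes the bound propagate, so $\sum_k S_kz^k$ converges on some disc $0<|z|<c$. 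Since $S(0)=\identity$, $S(z)$ is invertible on a possibly smaller disc, whence $W(z)=S(z)z^R$ is invertible on the universal cover of the punctured disc and is genuinely a fundamental matrix of the system.

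For the last assertion, if $\tilde W$ is any fundamental matrix, then writing $A(z)=z^{-1}R+\sum_kR_kz^k$ we have $\frac{d}{dz}(W^{-1}\tilde W)=-W^{-1}W'W^{-1}\tilde W+W^{-1}\tilde W'=-W^{-1}A\tilde W+W^{-1}A\tilde W=0$, so $W^{-1}\tilde W=\mathscr K$ is a constant matrix, and $\mathscr K\in\rm{GL}(n,\C)$ because $W$ and $\tilde W$ are invertible; hence $\tilde W=S(z)z^R\mathscr K$. I expect the convergence estimate for the $S_m$ to be the only real obstacle: once the non-resonance hypothesis on $\mathrm{spec}(R)$ is correctly matched with the spectral condition for the Sylvester operators $\mathcal L_m$, the algebraic part is immediate and the rest is bookkeeping.
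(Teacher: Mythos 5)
Your proof is correct: the power-series ansatz $W=S(z)z^{R}$, the Sylvester-equation recursion $(R-mI)S_m-S_mR=-\sum_{j=0}^{m-1}R_jS_{m-1-j}$ made uniquely solvable by the non-resonance hypothesis on the spectrum of $R$, and the convergence proof via $\|\mathcal L_m^{-1}\|=O(1/m)$ together with a geometric majorant constitute a complete and standard argument, and the final constancy of $W^{-1}\widetilde W$ is handled correctly. The paper offers no proof of its own, only a citation to Coddington--Levinson (Th.~4.1, p.~119), and your argument is essentially the classical one given there.
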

\begin{proof}
One may consult \cite[th.~4.1, p.~119]{coddington}.
\end{proof}
\begin{theorem}[Fuchs relations]
Let \eqref{markus2}: $\dfrac{dw}{dz}=A(z)w$ be a differential system (in matrix form) on a compact Riemann surface $M$. Assume the coefficients are meromorphic and each singularity is of the first kind (Fuchsian system). Let $E_P=\lambda_{1,P}+\ldots\lambda_{n,P}$ be the sum of the exponents for each singular point $P\in M$. Then $$\sum_{P\in M}E_P=0.$$
For a Fuchsian linear homogeneous differential equation $w^{(n)}+a_{n-1}(z)w^{(n-1)}+\cdots+a_0(z)w=0$ on a compact Riemann surface $M$ we similarly have
$$\sum_{P\in M}E_P=\dfrac{n(n-1)}{2}(N+2g_{\scaleto{M}{4pt}}-2),\qquad E_P:=\sum_{i=1}^n\rho_{\scaleto{i,P}{5pt}}$$
where $n$ is the order of \eqref{markus1}, $N$ is the number of its singularities, and $g_{\scaleto{M}{4pt}}$ is the genus of $M$.
\end{theorem}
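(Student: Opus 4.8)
The plan is to reduce both statements to the degree formula for line bundles on $M$, via the observation that the exponents at a singular point measure the twist in a meromorphic connection. For the system case, consider the trivial bundle $\mathcal{O}_M^n$ away from $\mathscr{S}$ equipped with the connection $\nabla = d - A(z)\,dz$; at each singular point $P$, by Theorem~\ref{levinson} there is a local fundamental matrix $W_P(z) = S_P(z)z^{R_P}$ with $S_P$ holomorphic and invertible near $P$, where the eigenvalues of $R_P$ are the local exponents $\lambda_{1,P},\dots,\lambda_{n,P}$. The determinant $\det W_P(z) = \det S_P(z)\cdot z^{\operatorname{Tr}(R_P)}$ furnishes, after taking $\det W$ globally, a multivalued section of $\det(\mathcal{O}_M^n) = \mathcal{O}_M$ whose monodromy and local vanishing orders are governed by $E_P = \operatorname{Tr}(R_P) = \sum_i \lambda_{i,P}$. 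Concretely, $u := \det W$ satisfies the scalar equation $u' = \operatorname{Tr}(A(z))\,u$, and the residue of $\operatorname{Tr}(A)$ at $P$ is exactly $E_P$. Since $\operatorname{Tr}(A)\,dz$ is a global meromorphic $1$-form on the compact Riemann surface $M$ with at worst simple poles, the residue theorem gives $\sum_{P\in M}\operatorname{Res}_P(\operatorname{Tr}(A)\,dz) = 0$, i.e. $\sum_{P\in M} E_P = 0$. (One must check that at an ordinary point the residue vanishes, which is immediate since $\operatorname{Tr}(A)$ is then holomorphic there, and that the chosen $R_P$ in Theorem~\ref{levinson} — valid when eigenvalues do not differ by positive integers — can always be arranged by a meromorphic gauge transformation, or one invokes the invariance of $\operatorname{Tr}(R_P)$ noted in the Remark after the definition of exponents.)

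For the scalar equation \eqref{markus1} of order $n$, I would pass from the equation to its companion system. Writing $w^{(n)} = -\sum_{k=0}^{n-1} a_k(z) w^{(k)}$, the associated first-order system has coefficient matrix whose trace is $-a_{n-1}(z)$; equivalently, the Wronskian $\mathcal{W}(z)$ of a fundamental set of solutions satisfies $\mathcal{W}' = -a_{n-1}(z)\,\mathcal{W}$ (Abel's identity). At a Fuchsian point $P$ with local coordinate $z$ centered at $P$, the sum of exponents is $E_P = -b_{n-1}(0) + \tfrac{n(n-1)}{2}$ where $b_{n-1}(z) = z\,a_{n-1}(z)$, so $-b_{n-1}(0) = -\operatorname{Res}_P(a_{n-1}\,dz)$ in that chart. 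The subtlety is that $a_{n-1}(z)\,dz$ is \emph{not} coordinate-independent — under $z \mapsto \zeta$ the coefficient $a_{n-1}$ transforms with an inhomogeneous term involving $\tfrac{d^2\zeta}{dz^2}\big/\tfrac{d\zeta}{dz}$, precisely the transformation law of a connection on a line bundle rather than a $1$-form. In fact $a_{n-1}\,dz$ defines a meromorphic connection on the line bundle $K_M^{\otimes n(n-1)/2}$, because the Wronskian of $n$ solutions transforms as a section of $K_M^{-n(n-1)/2}$ (each differentiation lowers weight, and $1 + 2 + \cdots + (n-1)$ total). Thus the "defect from being a global $1$-form" is exactly accounted for by $\deg\big(K_M^{\otimes n(n-1)/2}\big) = \tfrac{n(n-1)}{2}(2g_M - 2)$, while the poles contribute $\tfrac{n(n-1)}{2}$ at each of the $N$ singularities, giving $\sum_P E_P = \tfrac{n(n-1)}{2}(N + 2g_M - 2)$.

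The cleanest way to organize the second computation is: the Wronskian $\mathcal{W}$ is a meromorphic section of the line bundle $L := K_M^{\otimes(-n(n-1)/2)}$, so $\sum_{P} \operatorname{ord}_P(\mathcal{W}) = \deg L = -\tfrac{n(n-1)}{2}(2g_M - 2)$; then I would show that at each singular point $\operatorname{ord}_P(\mathcal{W}) = \tfrac{n(n-1)}{2} - E_P$ — namely, in a local chart $\mathcal{W}(z) = z^{\sum_i \rho_{i,P}}\cdot(\text{holomorphic, nonvanishing}) = z^{E_P}\cdot(\cdots)$ whose intrinsic vanishing order incorporates the $K_M^{n(n-1)/2}$ twist and a local frame correction of $\tfrac{n(n-1)}{2}$ — and sum over all $P$, splitting into the $N$ singular points and noting the contribution at regular points is built into the section. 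The main obstacle, and the only place real care is needed, is keeping track of this weight-$\tfrac{n(n-1)}{2}$ bookkeeping consistently: one must verify the transformation law of the Wronskian under change of chart (a standard but slightly fiddly computation using the chain rule iterated $n$ times), and confirm that the definition of $E_P$ via the indicial equation matches the local exponent of the Wronskian section. Once that identification is pinned down, both identities drop out of the degree theorem for line bundles on the compact Riemann surface $M$.
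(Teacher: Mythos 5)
Your argument for the system case is essentially the paper's own: both take the determinant $D=\det W$ of a fundamental matrix, use $D'/D=\mathrm{Tr}(A)$ to produce the global meromorphic $1$-form $\mathrm{Tr}(A(z))\,dz$ with residue $E_P$ at each singularity, and invoke the residue theorem on the compact surface $M$. For the scalar equation you go genuinely further than the paper, which only declares the analysis ``similar'' and sketches the genus-zero case by computing residues of $a_{n-1}(z)\,dz$ at the finite singularities and at $\infty$; it never explains where the $2g_{M}-2$ comes from for general $M$. Your route --- Abel's identity together with the observation that $a_{n-1}\,dz$ transforms inhomogeneously, so that the Wronskian is a section of a power of $K_M$ whose degree supplies the $\tfrac{n(n-1)}{2}(2g_M-2)$ while the local exponents supply the contribution $E_P-\tfrac{n(n-1)}{2}$ at each of the $N$ singular points --- is the honest general-genus proof, and it isolates the geometric content (the twist by a power of $K_M$) that the paper leaves implicit. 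Two repairs are needed, though. First, your signs are off twice in a compensating way: differentiation raises the canonical weight, so the Wronskian transforms as a section of $K_M^{\otimes n(n-1)/2}$, not of its inverse, and correspondingly $\mathrm{ord}_P(\mathcal{W})=E_P-\tfrac{n(n-1)}{2}$ rather than its negative; the two errors cancel in the final sum, but as written the intermediate statements are false. Second, since the exponents need not be integers, $\mathcal{W}$ is in general multivalued and has no integral divisor, so the step ``$\sum_P\mathrm{ord}_P(\mathcal{W})=\deg L$'' should be recast as the residue theorem applied to the logarithmic derivative: locally $d\log\mathcal{W}=-a_{n-1}(z)\,dz$, the discrepancy between charts is $d\log$ of the transition factors $\left(\tfrac{d\zeta}{dz}\right)^{n(n-1)/2}$, and summing residues yields $\deg K_M^{\otimes n(n-1)/2}$. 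With those adjustments both identities follow as you describe.
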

\begin{proof}
For a differential system \eqref{markus2}, we consider $W(z)$ a fundamental solution matrix of it and denote by $D(z)$ its determinant. Then
$$\dfrac{d}{dz}(\log(D(z)))dz=\dfrac{D^\prime(z)}{D(z)}dz=\rm{Tr}(A(z))dz$$
gives a meromorphic differential (single-valued) on $M$. Moreover $\dfrac{D^\prime(z)}{D(z)}dz$ is holomorphic except at the singular points $P$ of \eqref{markus2} where it has residue $E_P$. Since the sum residues of a meromorphic differential on a compact Riemann surface is zero, we obtain
$$\sum_{P\in M}E_P=0.$$
The analysis for the case of \eqref{markus1} is similar. For instance in the case of the Riemann sphere, we need to look at the residues of the differential $a_{n-1}(z)(dz)$ at finite points, and at $\infty$. They can be expressed in terms of the sum of local exponents at the considered point, then we again use the fact that the sum of residues of a meromorphic differential on $\C\mathbb P^1$ is zero, to conclude.
\end{proof}
\begin{theorem}
Let \eqref{markus1}: $w^{(n)}+a_{n-1}(z)w^{(n-1)}+\cdots+a_0(z)w=0$ be a linear homogeneous differential equation on the complex plane. Then it defines a Fuchsian differential equation on the Riemann sphere $\mathbb P^1(\C)$ if and only if each coefficient $a_{n-k}(z)$ is a rational function (with poles of order $\leqslant k$) and furthermore
$$|z^ka_{n-k}(z)|=|b_{n-k}(z)|<B,\qquad z\to\infty$$
for some bound $B$, i.e.
$$a_{n-k}(z)=O(1/z^k),\qquad z\to\infty.$$ 
\end{theorem}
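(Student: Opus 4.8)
The plan is to unwind the statement ``Fuchsian on $\mathbb P^1(\C)$'' into local conditions, one chart at a time: (i) the coefficients must be meromorphic on all of $\mathbb P^1(\C)$, which for functions already meromorphic on $\C$ forces them to be rational; (ii) every finite singularity must be of the first kind; (iii) the point $z=\infty$ must be of the first kind, or an ordinary point. Items (i) and (ii) are essentially definitional, so the actual work is item (iii), the analysis at infinity.

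For (i) I would use the standard fact that a meromorphic function on the compact Riemann surface $\mathbb P^1(\C)$ is rational; hence \eqref{markus1} defines a differential equation with meromorphic coefficients on $\mathbb P^1(\C)$ if and only if each $a_{n-k}$ is rational. For (ii), by the definition of a first-kind singularity given above, \eqref{markus1} has a first-kind singularity at a finite point $z_0$ if and only if $(z-z_0)^k a_{n-k}(z)$ is holomorphic at $z_0$ for $k=1,\dots,n$, i.e. if and only if $a_{n-k}$ has a pole of order at most $k$ at $z_0$; imposing this at every pole is exactly the requirement that every pole of $a_{n-k}$ have order $\leqslant k$.

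For (iii) I would set $\zeta=1/z$, so that $\dfrac{d}{dz}=-\zeta^2\dfrac{d}{d\zeta}$, and first record the identity
$$\frac{d^k}{dz^k}=\sum_{i=0}^{k}\beta_{ki}\,\zeta^{k+i}\,\frac{d^i}{d\zeta^i},\qquad \beta_{kk}=(-1)^k,\quad \beta_{k0}=0\ \ (k\geqslant 1),$$
for suitable constants $\beta_{ki}$; this follows by induction from the case $k=1$, or more cleanly from $z\frac{d}{dz}=-\zeta\frac{d}{d\zeta}$ together with $z^k\frac{d^k}{dz^k}=\prod_{j=0}^{k-1}(z\frac{d}{dz}-j)$. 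Writing $a_\ell$ for the coefficient of $w^{(\ell)}$ in \eqref{markus1} (so $a_n\equiv 1$), substituting and collecting the coefficient of $d^iw/d\zeta^i$ yields $A_i(\zeta)=\sum_{\ell=i}^{n}\beta_{\ell i}\,\zeta^{\ell+i}\,a_\ell(1/\zeta)$, with $A_n(\zeta)=(-1)^n\zeta^{2n}$. Dividing through by $A_n$ puts the equation in monic form with coefficients $\tilde a_i=A_i/A_n$, and by the same definition $z=\infty$ is of the first kind or ordinary precisely when $\zeta^{n-i}\tilde a_i(\zeta)$ is holomorphic at $\zeta=0$ for $i=0,\dots,n-1$, i.e. when $\sum_{\ell=i}^{n}\beta_{\ell i}\,\zeta^{\ell-n}\,a_\ell(1/\zeta)$ is holomorphic at $\zeta=0$.

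Finally I would note that these conditions form a triangular system in the functions $a_\ell(1/\zeta)$. Reading them off for $i=n-1,n-2,\dots$ in turn, and using that the $\ell=n$ term is constant (since $a_n\equiv 1$) while the already-treated $a_{n-1}(1/\zeta),\dots,a_{n-k+1}(1/\zeta)$ vanish to the orders forced at the previous steps, the condition at $i=n-k$ holds if and only if $\zeta^{-k}a_{n-k}(1/\zeta)$ is holomorphic at $0$, that is $a_{n-k}(z)=O(1/z^k)$ as $z\to\infty$, equivalently $b_{n-k}(z)=z^k a_{n-k}(z)$ is bounded near $\infty$. Running this through $k=1,\dots,n$ shows that ``$z=\infty$ is of the first kind'' is equivalent to the stated bound on the $b_{n-k}$; combined with (i) and (ii) this gives the asserted equivalence. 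I do not expect a genuine obstacle here; the only fiddly points are the bookkeeping of $\zeta$-powers in the coordinate change and the order in which the triangular conditions must be peeled off, and once the displayed identity for $d^k/dz^k$ is in hand the rest is forced.
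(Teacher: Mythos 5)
The paper states this theorem without any proof (it is quoted from the classical literature, cf.\ Coddington--Levinson), so there is nothing internal to compare your argument against; judged on its own merits, your proof is correct and is the standard one. Your decomposition into (i) rationality of the coefficients, (ii) poles of order $\leqslant k$ at finite points, and (iii) the first-kind condition at $\infty$ is exactly the right reading of ``Fuchsian on $\mathbb P^1(\C)$,'' and it is consistent with the remark earlier in the paper describing how the coefficients of \eqref{markus1} transform under a change of chart. The only genuinely delicate point is the one you isolate: after the substitution $\zeta=1/z$ the coefficient of $d^{n-k}w/d\zeta^{n-k}$ in the monic transformed equation involves all of $a_{n-1},\dots,a_{n-k}$, so the conditions at $\zeta=0$ are a priori coupled. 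Your triangular peeling handles this correctly: in the condition at level $k$, namely that $\sum_{\ell=n-k}^{n}\beta_{\ell,n-k}\,\zeta^{\ell-n}a_\ell(1/\zeta)$ be holomorphic at $0$, the $\ell=n$ term is constant, the terms with $n-k<\ell<n$ are already holomorphic by the previously established conditions, and the remaining term has the nonvanishing leading coefficient $\beta_{n-k,n-k}=(-1)^{n-k}$, so the level-$k$ condition is equivalent to $\zeta^{-k}a_{n-k}(1/\zeta)=b_{n-k}(1/\zeta)$ being holomorphic at $\zeta=0$, i.e.\ to $b_{n-k}$ being bounded near $\infty$. Together with the observation that boundedness of a rational function near $\infty$ is the same as holomorphy there, this gives precisely the stated equivalence; I see no gap.
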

\begin{theorem}
Let
$$\dfrac{d w^i}{dz}=A_{1j}^i(z)w^j,\quad \dfrac{d w^i}{dz}=A_{2j}^i(z)w^j, \quad i,j=1,2,\ldots,n$$
be meromorphic differential systems on a compact Riemann surface $M$, with the same singular points $\mathscr S$ which are all of the first kind. Assume that at each singularity the two systems have the same exponents, no two of which differ by an integer. Further assume that the two systems have the same monodromy group in $\rm{GL}(n,\C)$, relative to a basis of solution which reduces to the identity at the base point $P\in M\setminus \mathscr S$. Then the two differential systems are the same. 
\end{theorem}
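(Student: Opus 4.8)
The plan is to compare normalised fundamental matrices and reduce everything to the fact that a holomorphic function on a compact connected Riemann surface is constant. Choose fundamental solution matrices $W_1(z)$ and $W_2(z)$ of the two systems, normalised at the base point so that $W_1(P)=W_2(P)=\identity$, and put $F(z):=W_2(z)W_1(z)^{-1}$. First I would check that $F$ is single-valued on $M\setminus\mathscr S$: analytic continuation along a loop $\gamma$ is a ring homomorphism on germs and sends $W_k$ to $W_k\mathfrak M^{(k)}_{[\gamma]}$, where $\mathfrak M^{(k)}_{[\gamma]}$ is the monodromy matrix of the $k$-th system computed from the fundamental matrix normalised to $\identity$ at $P$; hence $\gamma_\ast F=W_2\mathfrak M^{(2)}_{[\gamma]}\bigl(\mathfrak M^{(1)}_{[\gamma]}\bigr)^{-1}W_1^{-1}$, which equals $F$ precisely because the two monodromy representations coincide by hypothesis. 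Thus $F$ is a well-defined holomorphic $\rm{GL}(n,\C)$-valued function on $M\setminus\mathscr S$.

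Next I would show that $F$ extends holomorphically across each $P_0\in\mathscr S$. Since the two systems have the same singular set, both $A_1$ and $A_2$ have a genuine simple pole at $P_0$, with nonzero residue matrices $R_1,R_2$ having, by assumption, the same spectrum, no two eigenvalues of which differ by a nonzero integer. By \theoref{levinson} there are, for $k=1,2$, fundamental matrices near $P_0$ of the form $S_k(z)z^{R_k}$ with $S_k$ holomorphic and $S_k(0)=\identity$; since two fundamental matrices of one system differ by a constant right factor, $W_k=S_k(z)z^{R_k}D_k$ near $P_0$ for some $D_k\in\rm{GL}(n,\C)$. Writing $G:=D_2D_1^{-1}$ one gets
\[
  F(z)=S_2(z)\,\bigl(z^{R_2}Gz^{-R_1}\bigr)\,S_1(z)^{-1}.
\]

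The crux is that the middle factor $H(z):=z^{R_2}Gz^{-R_1}$ is constant. It is single-valued on the punctured disc, being equal to $S_2(z)^{-1}F(z)S_1(z)$. Putting $R_1,R_2$ in Jordan form shows that every entry of $H$ is a finite $\C$-linear combination of monomials $z^{\mu-\nu}(\log z)^m$ with $\mu$ an eigenvalue of $R_2$, $\nu$ an eigenvalue of $R_1$, and $m\geqslant0$; such a monomial is single-valued only when $m=0$ and $\mu-\nu\in\Z$, and since $\mu,\nu$ lie in the common exponent set at $P_0$ with no nonzero-integer differences, this forces $\mu=\nu$. Hence $H$ is constant, and evaluating at $z=1$ gives $H\equiv G\in\rm{GL}(n,\C)$; therefore $F=S_2\,G\,S_1^{-1}$ is holomorphic and invertible at $P_0$. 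Combining this with the first step, $F$ extends to a holomorphic map $M\to\rm{GL}(n,\C)$; since $M$ is compact and connected each entry of $F$ is constant, so $F$ is a constant matrix, and evaluating at $P$ gives $F\equiv\identity$. Thus $W_1=W_2$, and from $A_k=W_k'W_k^{-1}$ we conclude $A_1=A_2$.

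I expect the main obstacle to be the third step: correctly bookkeeping the logarithmic terms in $z^{R_2}Gz^{-R_1}$ and exploiting that it is the union of the two sets of exponents at $P_0$ (not each set separately) that must be non-resonant; once that is handled, the remainder is a routine monodromy-plus-maximum-principle argument.
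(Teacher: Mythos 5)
Your proposal is correct and follows essentially the same route as the paper's proof: form the quotient of normalised fundamental matrices, use the equality of monodromy to see it is single-valued, apply \theoref{levinson} at each singular point together with the non-resonance of the exponents to show the middle factor $z^{R_2}Gz^{-R_1}$ is constant, and conclude by compactness of $M$ that the quotient is identically the identity. The only cosmetic difference is that the paper extracts the diagonality of the constant conjugating matrix from the explicit equality of the local monodromy matrices $e^{2i\pi\Lambda}$-conjugates, whereas you read it off directly from single-valuedness of $z^{R_2}Gz^{-R_1}$ on the punctured disc; these are the same fact.
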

\begin{proof}
Let $\mathscr S$ be the singular points of the two systems. Let $W_1(z)$ and $W_2(z)$ be the solutions matrices of the first system, respectively the second system, which reduce to $\identity$ at the base point $P\in M\backslash\mathscr S$. We consider the matrix $W_1(z)W_2^{-1}(z)$, and continue it analytically along all curves in $M\backslash \mathscr S$. We find after analytic continuation around a loop
\begin{equation*}
\begin{split}
W_1(z)\to W_1(z)C_1\\
W_2(z)\to W_2(z)C_2.
\end{split}
\end{equation*}
But the monodromy is the same for the two systems around this loop and hence we have $C_1=C_2$.

Thus 
$$W_1(z)C_1C_1^{-1}W_2(z)^{-1}=W_1(z)W_2(z)^{-1}$$
and hence $W_1(z)W_2(z)^{-1}$ is single-valued and holomorphic on $M\backslash\mathscr S$. Next we examine $W_1(z)W_2(z)^{-1}$ in a neighborhood of a singular point $P$. Here (\theoref{levinson})
\begin{equation*}
\begin{split}
W_1(z)=(I+zS_{1,1}+z^2S_{2,1}+\ldots)z^{R_1}\mathscr{K}_1\\
W_2(z)=(I+zS_{1,2}+z^2S_{2,2}+\ldots)z^{R_2}\mathscr{K}_2
\end{split}
\end{equation*}
where $R_1$ and $R_2$ (which are diagonalizable by hypothesis) are each similar to the matrix $\Lambda:=\diag(\lambda_1,\lambda_2,\ldots,\lambda_n)$, say
\begin{equation*}
\begin{split}
Q_1R_1Q_1^{-1}=\Lambda\\
Q_2R_2Q_2^{-1}=\Lambda.
\end{split}
\end{equation*}
So we have
\begin{equation*}
W_1(z)=(I+zS_{1,1}+z^2S_{2,1}+\ldots)Q_1^{-1}z^\Lambda\widehat{\mathscr K_1},\quad\widehat{\mathscr K_1}=Q_1\mathscr K_1
\end{equation*}
where $z^\Lambda=\diag(z^{\lambda_1},z^{\lambda_2},\ldots,z^{\lambda_n})$. Similarly we find
\begin{equation*}
W_2(z)=(I+zS_{1,2}+z^2S_{2,2}+\ldots)Q_2^{-1}z^\Lambda\widehat{\mathscr K_2},\quad\widehat{\mathscr K_2}=Q_1\mathscr K_2,
\end{equation*}
with $z^\Lambda$ as before. Thus
$$W_1(z)W_2(z)^{-1}=(I+zS_{1,1}+z^2S_{2,1}+\ldots)Q_1^{-1}z^\Lambda\widehat{\mathscr K_1}\widehat{\mathscr K_2}^{-1}z^{-\Lambda}Q_2(I-zS_{1,1}+z^2S_{2,1}+\ldots).$$
Now we show that $\widehat{\mathscr K_1}\widehat{\mathscr K_2}^{-1}$ commutes with $z^\Lambda$. Then $W_1(z)W_2(z)^{-1}$ is holomorphic at $P$, hence holomorphic everywhere, hence constant since $M$ is a compact Riemann surface. This gives 
\begin{equation*}
\begin{split}
W_1(z)W_2^{-1}(z)=\identity\Longleftrightarrow W_1(z)=W_2(z) 
\end{split}
\end{equation*}
on $M$. Then by interpreting the two differential systems as regular singular connections on the trivial vector bundle $\mathcal{O}_{\scaleto{M}{5pt}}^n$, we see that they give rise to the same local system \cite{deligne}; using the Riemann-Hilbert correspondence \cite[cor.~3.3, p.~103]{sabbah}, we see that they are equal.

Let us then explain what happens in a neighborhood of a singular point $P$. After making a small loop around $P$, we obtain
%After encircling we must have
\begin{equation*}
\begin{split}
W_1(z)\to(I+zS_{1,1}+z^2S_{2,1}+\ldots)Q_1^{-1}z^\Lambda e^{2i\pi\Lambda}\widehat{\mathscr K_1}\\
W_2(z)\to(I+zS_{1,2}+z^2S_{2,2}+\ldots)Q_2^{-1}z^\Lambda e^{2i\pi\Lambda}\widehat{\mathscr K_2}.
\end{split}
\end{equation*}
Therefore around $P$ we obtain the monodromy matrix
$$\widehat{\mathscr K_1}^{-1}e^{2i\pi\Lambda}\widehat{\mathscr K_1}$$
for the first system, respectively
$$\widehat{\mathscr K_2}^{-1}e^{2i\pi\Lambda}\widehat{\mathscr K_2}$$
for the second system. But these matrices are the same so
$$\widehat{\mathscr K_1}^{-1}e^{2i\pi\Lambda}\widehat{\mathscr K_1}
=\widehat{\mathscr K_2}^{-1}e^{2i\pi\Lambda}\widehat{\mathscr K_2}.$$
Thus
$$(\widehat{\mathscr K_1}\widehat{\mathscr K_2}^{-1})^{-1}e^{2i\pi\Lambda}(\widehat{\mathscr K_1}\widehat{\mathscr K_2}^{-1})=e^{2i\pi\Lambda}.$$
Now $e^{2i\pi\Lambda}$ is diagonal, with distinct eigenvalues, thus $\widehat{\mathscr K_1}\widehat{\mathscr K_2}^{-1}$ is also diagonal. Therefore it commutes with $z^\Lambda$.
\end{proof}
\begin{definition}Let $M$ be a Riemann surface and $\mathscr S$ a closed discrete (possibly empty) set of points in $M$. Let $P\in M\backslash \mathscr S$ and 
$$\Psi:\pi_1(M\backslash \mathscr S)\to \rm{GL}(n,\C)$$
a homomorphism. The generalized Riemann-Hilbert problem consists in finding a differential system $\dfrac{dw}{dz}=A(z)w$, with $A(z)$ a $n\times n$ holomorphic matrix on $M\backslash \mathscr S$, (or if $\mathscr S$ is empty and $M$ compact $A(z)$ is meromorphic) having the prescribed singularities (possibly essential) at points of $\mathscr S$ and the prescribed monodromy. This means that the fundamental solution matrix which reduces to the identity at $P$ yields the given representation of the fundamental group.\end{definition}
\begin{theorem}
Let $M$ be a Riemann surface, $S$ a closed (possibly empty) discrete subset of $M$ such that $M\backslash \mathscr S$ is non-compact. Further let $\rm{GL}(n,\mathbb C)$ be the linear complex Lie group and 
$$\Psi:\pi_1(M\backslash \mathscr S)\to \rm{GL}(n,\mathbb C)$$
a representation of the fundamental group of $M\backslash \mathscr S$ into $\rm{GL}(n,\mathbb C)$. Then there exists a differential system holomorphic on $M\backslash \mathscr S$ and with singularities only on $\mathscr S$, admitting the given monodromy representation $\Psi:\pi_1(M\backslash \mathscr S)\to \rm{GL}(n,\C)$.
\end{theorem}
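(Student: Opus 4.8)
The plan is to reduce everything to the fact that every holomorphic vector bundle on an open Riemann surface is holomorphically trivial, and then to read the desired system off from the canonical flat connection carried by the bundle attached to $\Psi$. Put $X:=M\setminus\mathscr S$; since $\mathscr S$ is discrete and closed and $M$ is connected, $X$ is a connected non-compact (i.e.\ open) Riemann surface. Fix the base point $P\in X$ and let $q\colon\widetilde X\to X$ be the universal covering. After replacing $\Psi$ by the homomorphism $\gamma\mapsto\Psi(\gamma)^{-1}$ (recall that $\Psi$ was normalised above to be an \emph{anti}-homomorphism, see \eqref{markus4}; this reconciles the two conventions and costs nothing), form the associated holomorphic rank-$n$ vector bundle $E:=\widetilde X\times_{\Psi}\C^{\,n}$ on $X$. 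Its sheaf $\mathcal L$ of locally constant sections is a rank-$n$ $\C$-local system with monodromy $\Psi$, and $E$ carries a unique integrable holomorphic connection $\nabla$ whose sheaf of flat sections is $\mathcal L$ (in a local trivialisation coming from a sheet of $q$, $\nabla=d$). By construction, the monodromy of $(E,\nabla)$, in the normalisation ``fundamental matrix $=\identity$ at $P$'' of the Proposition-Definition above, is exactly $\Psi$.

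Next I would prove that $E$ is holomorphically trivial. Since $X$ is an open Riemann surface it is a Stein manifold (Behnke--Stein), so $H^q(X,\mathcal O_X)=0$ for $q\ge1$ by Cartan's Theorem B; as $X$ is a connected non-compact surface one has $H^2(X,\Z)=0$, and the exponential sequence then yields $H^1(X,\mathcal O_X^\times)=0$, i.e.\ every line bundle on $X$ is trivial. For arbitrary rank one invokes Grauert's Oka principle: on a Stein space the holomorphic and the topological classifications of vector bundles coincide, and every topological $\C$-vector bundle on $X$ is trivial because $X$ has the homotopy type of a wedge of circles. Hence there is a holomorphic isomorphism $E\cong\mathcal O_X^{\,n}$; see \cite{forster, gunning}.

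It then remains to extract the system. Transport $\nabla$ through a holomorphic isomorphism $E\xrightarrow{\ \sim\ }\mathcal O_X^{\,n}$ and let $(e_1,\dots,e_n)$ be the resulting global holomorphic frame. Write $\nabla e_j=\sum_i\omega_{ij}e_i$, where $\omega=(\omega_{ij})$ is an $n\times n$ matrix of holomorphic $1$-forms on $X$ (integrability is automatic since $\dim_\C X=1$). A section $s=\sum_j w^j e_j$ is flat precisely when $dw+\omega\,w=0$; in any local coordinate $z$ on $X$, writing $\omega=-A(z)\,dz$ with $A$ holomorphic in $z$, this becomes $\dfrac{dw}{dz}=A(z)w$, a linear homogeneous system of the type \eqref{markus2}, holomorphic on all of $X=M\setminus\mathscr S$ and hence with possible singularities only at the points of $\mathscr S$. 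Its fundamental solution matrix normalised to $\identity$ at $P$ realises the prescribed monodromy $\Psi$ by the construction of Step~1, which proves the theorem.

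The only genuinely non-elementary ingredient is the holomorphic triviality of $E$, which rests on Behnke--Stein together with Grauert's Oka principle (equivalently, for the line-bundle case, on Cartan's Theorem B and the vanishing of $H^2(X,\Z)$); everything else is formal. This is exactly where the non-compactness hypothesis is used: if $M\setminus\mathscr S$ were compact --- for instance $M=\mathbb P^1(\C)$ with $\mathscr S=\emptyset$ --- then $E$ need not be trivial ($E$ still splits as $\bigoplus_i\mathcal O_{\scaleto{\mathbb P^1(\C)}{7pt}}(a_i)$ by \theoref{grothendieck}, but the $a_i$ need not vanish), the global frame disappears, and one is driven into a genuine analytic factorisation problem rather than the soft argument above.
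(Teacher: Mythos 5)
Your argument is correct and follows essentially the same route that the paper only sketches: build the bundle associated to $\Psi$ over $M\setminus\mathscr S$, establish its holomorphic triviality (the paper's ``existence of a holomorphic section'' of the principal bundle), and read the system off from the resulting global frame via the canonical flat connection. You merely supply the details the paper delegates to R\"ohrl and Sabbah, in particular justifying triviality via Behnke--Stein and Grauert's Oka principle, and you correctly flag the homomorphism/anti-homomorphism convention that the paper glosses over.
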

\begin{proof}
We only give the main steps. They consists in 
\begin{itemize}
\item Construction of a principal $\rm{GL}(n,\C)$-bundle $\mathscr P$ over $M\backslash \mathscr S$.
\item Proof of the existence of a non-trivial holomorphic section of $M\backslash \mathscr S$ into $\mathscr P$. 
\item Once the existence of such a section from $M\backslash \mathscr S$ into $\mathscr P$ is proven, use it to construct a fundamental solution matrix (which is a holomorphic function from the universal cover $\widetilde{M\backslash \mathscr S}$ of $M\backslash \mathscr S$ to $\rm{GL}(n,\C)$.
\end{itemize}
See \cite{rohrl, sabbah}.
\end{proof}
\begin{theorem}
Let $M$ be a non-compact Riemann surface and $\mathscr S$ be a closed discrete (possibly empty) set of points of $M$. Let $\Psi:\pi_1(M\backslash \mathscr S)\to \rm{GL}(n,\C)$ be a prescribed homomorphism for the base point $P\in M\backslash \mathscr S$. Then there exists a regular singular differential system $\dfrac{dw}{dz}=A(z)w$ on $M$, with the prescribed singularities and the prescribed monodromy group.
\end{theorem}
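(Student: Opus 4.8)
The plan is to take the system furnished by the preceding theorem on $M\setminus\mathscr S$ and then repair its behaviour at the points of $\mathscr S$. First note that since $M$ is non-compact and connected, $M\setminus\mathscr S$ is again a non-compact connected Riemann surface (it is open and dense, hence non-empty, and were it compact it would be clopen in $M$, forcing $M\setminus\mathscr S=M$, contradicting non-compactness). Applying the preceding theorem we obtain a differential system holomorphic on $M\setminus\mathscr S$ --- equivalently a flat holomorphic connection $\nabla_0$ on the trivial bundle $\mathcal O^n_{M\setminus\mathscr S}$ --- whose monodromy representation, for the standard convention relating it to $\Psi$, is $\Psi$. Its only defect is that at a point $p\in\mathscr S$ the matrix $A$ may have an essential or higher-order singularity; we shall replace the system near each such $p$ by a Fuchsian model with the same local monodromy, i.e.\ perform Deligne's \emph{canonical extension} \cite{deligne}.

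Concretely, for each $p\in\mathscr S$ fix a coordinate disc $(U_p,z)$ with $z(p)=0$, the discs pairwise disjoint, and let $\mathfrak M_p=\Psi([\gamma_p])\in\mathrm{GL}(n,\C)$ be the monodromy along a small positively oriented loop $\gamma_p$ about $p$. Choose a matrix $\Lambda_p$ with $\exp(2i\pi\Lambda_p)=\mathfrak M_p$ all of whose eigenvalues have real part in $[0,1)$. The elementary system $\dfrac{dw}{dz}=z^{-1}\Lambda_p w$ on $U_p\setminus\{p\}$ has fundamental matrix $z^{\Lambda_p}$, hence monodromy $\mathfrak M_p$, and a simple pole, so it is Fuchsian --- in particular regular singular --- at $p$. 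On the annulus $U_p\setminus\{p\}$ both $(\mathcal O^n_{M\setminus\mathscr S},\nabla_0)$ restricted and this local model are flat holomorphic bundles whose monodromy representations of $\pi_1(U_p\setminus\{p\})\cong\Z$ are conjugate, so there is a flat --- hence holomorphic --- bundle isomorphism between them over $U_p\setminus\{p\}$. Using these isomorphisms as clutching data, glue the flat bundle over $M\setminus\mathscr S$ to the logarithmic models over the $U_p$; as the $U_p$ are disjoint the cocycle condition is automatic, and one obtains a holomorphic rank-$n$ bundle $E\to M$ carrying a connection $\nabla$ that is holomorphic on $M\setminus\mathscr S$, has a first-kind (logarithmic) pole at every point of $\mathscr S$, and has monodromy $\Psi$.

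It remains to pass from $(E,\nabla)$ to an actual system. Since $M$ is a non-compact Riemann surface, every holomorphic vector bundle on $M$ is holomorphically trivial (Grauert's Oka principle; see \cite{forster, rohrl}). Fixing a global holomorphic frame of $E$ turns $\nabla$ into $d+\omega$ with $\omega=A(z)\,dz$ a matrix of meromorphic one-forms on $M$, holomorphic off $\mathscr S$ and with at worst a simple pole at each point of $\mathscr S$. The horizontal sections of $\nabla$ are precisely the solutions of $\dfrac{dw}{dz}=-A(z)w$, a system of the form \eqref{markus2} on $M$, singular exactly on $\mathscr S$, Fuchsian --- hence regular singular --- there, and with monodromy representation $\Psi$; absorbing the sign if one prefers the stated form finishes the proof.

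The step I expect to demand the most care is the gluing in the second paragraph: one must carry out Deligne's canonical extension at each $p\in\mathscr S$ and verify that the flat identifications on the surrounding annuli are bona fide holomorphic transition maps, so that the object assembled from $(\mathcal O^n,\nabla_0)$ and the local models really is a holomorphic vector bundle with a connection whose only poles, along $\mathscr S$, are logarithmic. The other serious ingredient, the holomorphic triviality of vector bundles on an open Riemann surface, may be quoted as a black box. When $\mathscr S$ is finite one can bypass the bundle language entirely and instead apply, near each $p$, an explicit meromorphic gauge (``shearing'') transformation lowering the pole order to one without altering the monodromy, in the spirit of \theoref{levinson}; the bundle-theoretic formulation above has the advantage of treating an arbitrary closed discrete $\mathscr S$ in one stroke.
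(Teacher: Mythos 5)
The paper offers no proof of this theorem beyond the citation to \cite[chap.~3]{forster}, and your argument is correct and essentially the standard one carried out there: obtain the monodromy on $M\setminus\mathscr S$ from the preceding theorem, extend across each puncture by the local logarithmic model $z^{-1}\Lambda_p\,dz$ with $\exp(2i\pi\Lambda_p)=\mathfrak M_p$ (Deligne's canonical extension, glued by flat isomorphisms over the punctured discs), and then trivialize the resulting bundle via Grauert's theorem that every holomorphic vector bundle on a non-compact Riemann surface is trivial. All the ingredients you invoke are legitimate and correctly deployed, including the edge cases (connectedness and non-compactness of $M\setminus\mathscr S$, pairwise disjoint coordinate discs for a closed discrete $\mathscr S$), so there is nothing substantive to repair.
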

\begin{proof}
\cite[chap.~3]{forster}.
\end{proof}
\begin{theorem}
Let $M$ be a compact Riemann surface and $\mathscr S$ a discrete closed (possibly empty) set of points of $M$. Let
$$\Psi:\pi_1(M\backslash \mathscr S)\to \rm{GL}(n,\C)$$
be a homomorphism, for the base point $P\in M\backslash \mathscr S$. Then there exists a regular singular differential system $\dfrac{dw}{dz}=A(z)w$ on $M$, with singularity only at $\mathscr S$ (and at one additional point if $\mathscr S$ is empty), and with the prescribed monodromy.
\end{theorem}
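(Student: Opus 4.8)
The plan is to deduce the compact case from the preceding theorem (the non-compact Riemann--Hilbert problem) by the standard device of removing a single point and then re-gluing across it against a regular singular local model. First I would pass to the non-compact situation. If $\mathscr S\neq\emptyset$, choose $p_0\in\mathscr S$; if $\mathscr S=\emptyset$, choose $p_0\in M$ arbitrarily. In either case set $M^\ast:=M\setminus\{p_0\}$, a non-compact Riemann surface, and $\mathscr S^\ast:=\mathscr S\setminus\{p_0\}$, a closed discrete subset of $M^\ast$. Since a small loop $\gamma_{p_0}$ about $p_0$ bounds a disk once $p_0$ is filled in, there is a natural surjection $\pi_1(M^\ast\setminus\mathscr S^\ast)\twoheadrightarrow\pi_1(M\setminus\mathscr S)$, and composing $\Psi$ with it yields $\Psi^\ast\colon\pi_1(M^\ast\setminus\mathscr S^\ast)\to\mathrm{GL}(n,\C)$. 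When $p_0\in\mathscr S$ this surjection is the identity and $\Psi^\ast=\Psi$, while the local monodromy $\mathfrak{M}_0:=\Psi^\ast(\gamma_{p_0})$ equals the prescribed $\Psi(\gamma_{p_0})$; when $\mathscr S=\emptyset$ the loop $\gamma_{p_0}$ maps to $1\in\pi_1(M)$, forcing $\mathfrak{M}_0=\identity$, so $p_0$ will carry only an apparent singularity.

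Second, I would apply the preceding non-compact theorem to the data $(M^\ast,\mathscr S^\ast,\Psi^\ast)$. Because $M^\ast$ is non-compact, all holomorphic bundles on it are trivial, and the theorem produces an honest system $\frac{dw}{dz}=A^\ast(z)w$ on $M^\ast$, holomorphic on $M^\ast\setminus\mathscr S^\ast=M\setminus(\mathscr S\cup\{p_0\})$, regular singular exactly along $\mathscr S^\ast$, and with monodromy $\Psi^\ast$. In a punctured coordinate disk $\Delta^\ast$ about $p_0$ (coordinate $t$) this system is holomorphic, with local monodromy $\mathfrak{M}_0$.

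Third, and this is the crux, I would extend across $p_0$ so that the singularity there is regular. Pick a constant matrix $R$ with $e^{2i\pi R}=\mathfrak{M}_0$ (eigenvalues in a fixed fundamental domain, following Deligne's canonical extension), and form the standard regular singular model $\nabla_0=d+R\,\frac{dt}{t}$ on the trivial bundle over the full disk $\Delta$; its fundamental solution $t^{R}$ has monodromy $\mathfrak{M}_0$. If $W^\ast$ is a multivalued fundamental matrix of the system on $\Delta^\ast$, then $\phi:=W^\ast t^{-R}$ is single-valued on $\Delta^\ast$, the two monodromies cancelling, and it is holomorphic and invertible there, i.e. $\phi\colon\Delta^\ast\to\mathrm{GL}(n,\C)$. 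Using $\phi$ as clutching function over the overlap $\Delta^\ast=M^\ast\cap\Delta$ of the two-chart cover $\{M^\ast,\Delta\}$ of $M$ glues the trivial bundle on $\Delta$ to the trivial bundle on $M^\ast$ into a holomorphic vector bundle $E\to M$; since $\phi$ intertwines the flat frames $W^\ast$ and $t^{R}$, the connections $d+A^\ast dz$ and $\nabla_0$ patch into a global meromorphic connection $\nabla$ on $E$. By construction $\nabla$ is holomorphic off $\mathscr S\cup\{p_0\}$, regular singular along $\mathscr S^\ast$, regular singular at $p_0$, and realizes $\Psi$; its singular locus is thus $\mathscr S$ when $\mathscr S\neq\emptyset$, and the single point $p_0$ when $\mathscr S=\emptyset$, as claimed.

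The main obstacle is precisely this last extension: the system furnished by the non-compact theorem is only known to be holomorphic on the punctured disk $\Delta^\ast$, so a priori it could have an essential singularity at $p_0$, and one cannot simply continue $A^\ast$ meromorphically in the given frame. Re-gluing against the regular model $\nabla_0$ circumvents this, because the possibly wild behaviour is absorbed into the clutching map $\phi$ (which need only be holomorphic, not meromorphic, on $\Delta^\ast$ for $E$ to be a holomorphic bundle), whereas near $p_0$ the connection is the manifestly regular singular $\nabla_0$. I would emphasize that $\nabla$ lives on a generally \emph{non-trivial} bundle $E$; demanding instead a single global meromorphic matrix $A(z)$ on the trivial bundle is a strictly harder problem, constrained by the Fuchs relation $\sum_{P}E_P=0$ and subject to the Bolibrukh obstruction, and is not what the present statement requires.
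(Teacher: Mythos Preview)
The paper does not give its own proof: it simply cites \cite{andre, deligne, katz}. Your outline is precisely the Deligne--R\"ohrl construction that those references carry out, so in that sense your approach matches the paper's (outsourced) argument.

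One correction to your closing paragraph, however. You say that asking for a global meromorphic matrix $A(z)$ on the \emph{trivial} bundle ``is a strictly harder problem, constrained by the Fuchs relation and subject to the Bolibrukh obstruction, and is not what the present statement requires.'' That conflates two distinct problems. The Bolibrukh obstruction and the Fuchs relation concern \emph{Fuchsian} systems (simple poles). For \emph{regular singular} systems, which is what the theorem asks for, there is no obstruction to passing to the trivial bundle: on the compact surface $M$ the bundle $E$ you build admits a meromorphic frame with poles supported on $\mathscr S$ (or on $\{p_0\}$ when $\mathscr S=\emptyset$), for instance because $E$ is holomorphically trivial on the open Riemann surface $M\setminus\mathscr S$ and any holomorphic frame there can be modified, via Riemann--Roch twists, to extend meromorphically over $\mathscr S$. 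A meromorphic gauge change preserves regularity of singularities, so one obtains a genuine system $\tfrac{dw}{dz}=A(z)w$ with meromorphic $A$ and regular singularities exactly on $\mathscr S$ (plus the one extra point when $\mathscr S=\emptyset$). Since the paper's own Definition of a ``linear homogeneous system'' is in terms of a matrix $A^i_j(z)$ in each chart, i.e.\ a connection on the trivial bundle, this last step is arguably needed to match the statement as written; fortunately it is routine and not the hard Fuchsian problem you allude to.
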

\begin{proof}
\cite{andre, deligne, katz}.
\end{proof}
\subsection{The Bolibrukh counterexamples: an instance}
In this paragraph we give following \cite{boli1, beauville} a counterexample to Riemann-Hilbert problem, in the Fuchsian case. 

After the works of Plemelj and Birkhoff \cite{plemelj1, plemelj, birkhoff3}, it was widely believed that the Riemann-Hilbert problem for Fuchsian (simple poles) differential systems on the Riemann sphere $\mathbb P^1(\C)$ was solved. This was the case until Bolibrukh came in \cite{boli1} with a counterexample (in fact he gave many of them over the years). In order to explain what the issue was with the proofs of Plemelj and Birkhoff, let us summarize the main steps of their proofs. Let $\mathscr S$ be a finite non empty subset of $\mathbb P^1(\C)$ and
$$\Psi:\pi_1(\mathbb P^1(\C)\setminus\mathscr S)\to\rm{GL}(n,\C),\,n\geqslant1,$$
a fixed representation. Plemelj and Birkhoff proceeded as follows; firstly following Riemann they construct a multiform invertible matrix $W(z)$ which transforms like $W(z)\Psi(\gamma)$ under analytic continuation along a loop $\gamma$. By construction the matrix $W^\prime(z)W^{-1}(z)$ is then invariant under the monodromy operation, hence it arises from a matrix $A(z)$ which is holomorphic in $\mathbb P^1(\C)\setminus\mathscr S$. Thus $W(z)$ is a fundamental matrix of the system $w^\prime(z)=A(z)w(z)$ on $\mathbb P^1(\C)\setminus\mathscr S$. The monodromy representation of this system is by construction given by $\Psi$. Furthermore, the constructed matrix $W(z)$ has moderate growth in the neighborhood of the points of $\mathscr S$. Therefore the same holds for $A(z)$, which as a consequence is meromorphic on $\mathbb P^1(\C)$. Thus we have realized $\Psi$ as the monodromy representation of a system with regular singularities. After that step, Plemelj and Birkhoff as a second step fix a point $P\in\mathscr S$, and show that $W(z)$ may be so chosen to have simple poles at all the points of $\mathscr S\setminus \{P\}$. Finally as a last step they modify the matrix $W(z)$ in order to make the resulting system also have a simple at $P$. It is during this last step that the argument breaks. Indeed Plemelj and Birkhoff assume (implicitly) that the monodromy matrix $\mathfrak M_{\left[\gamma_{\scaleto{P}{3.5pt}}\right]}$, which is the image under $\Psi$ of the homotopy class of a small loop $\gamma_{\scaleto{P}{4pt}}$ encircling once counter clockwise $P$ alone, is diagonalizable. We have the following
\begin{theorem}[\cite{beauville, boli4}]
\label{bolibeauville}
Let $\Psi:\pi_1(\mathbb P^1(\C)\setminus\mathscr S)\to\rm{GL}(n,\C)$, $n\geqslant4$ be a representation which satisfies the following conditions
\begin{itemize}
\item $\Psi$ is not irreducible, meaning that the image of $\Psi$ fixes a subspace of $\C^n$ distinct from $\{0\}$ and $\C^n$ itself;
\item each of the matrices $\mathfrak M_{\left[\gamma_{\scaleto{P}{3.5pt}}\right]}$, $P\in\mathscr S$, possesses only one eigenvalue $\mu_{\scaleto{P}{4pt}}$ and admits only one Jordan block, and furthermore for $\rm{I}_n$ the identity $n\times n$ complex matrix
$$\displaystyle\prod_{P\in\mathscr S}\mathfrak M_{\left[\gamma_{\scaleto{P}{3.5pt}}\right]}=\mathrm{I}_n;$$
\item and$$\displaystyle \prod_{P\in\mathscr S}\mu_{\scaleto{P}{4pt}}\not=1.$$
\end{itemize}
Then $\Psi$ is not isomorphic to the monodromy representation of a differential system on $\mathbb P^1(\C)$ with only simple poles.
\end{theorem}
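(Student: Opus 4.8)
The plan is to suppose that $\Psi$ is realised and reach a contradiction, combining \theoref{grothendieck} with the residue form of the Fuchs relation. The first point is a translation: a differential system on $\mathbb{P}^1(\mathbb{C})$ with only simple poles (singularities of the first kind) is exactly a logarithmic connection $\nabla$ on the \emph{trivial} bundle $\mathcal{O}^n_{\mathbb{P}^1(\mathbb{C})}$. After a M\"obius change of variable, and carrying along any extra apparent singularities (at which the monodromy is trivial, so the exponents there are integers), we may assume $\infty$ is not a pole and that the poles lie on $\mathscr{S}$. Then $\deg\mathcal{O}^n_{\mathbb{P}^1(\mathbb{C})}=0$ gives the Fuchs relation $\sum_{P\in\mathscr{S}}\operatorname{tr}\operatorname{Res}_{P}\nabla=0$; more generally $\deg\mathcal{G}=-\sum_{P\in\mathscr{S}}\operatorname{tr}\operatorname{Res}_{P}(\nabla|_{\mathcal{G}})$ for any $\nabla$-stable subbundle $\mathcal{G}\subseteq\mathcal{O}^n_{\mathbb{P}^1(\mathbb{C})}$.

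Next I would use reducibility. The image of $\Psi$ fixes a subspace $W$ with $0\subsetneq W\subsetneq\mathbb{C}^n$; put $p=\dim W$. Over $\mathbb{P}^1(\mathbb{C})\setminus\mathscr{S}$ the subspace $W$ gives a $\nabla$-flat subbundle whose saturation $\mathcal{F}\subset\mathcal{O}^n_{\mathbb{P}^1(\mathbb{C})}$ is a $\nabla$-stable rank-$p$ subbundle with logarithmic poles on $\mathscr{S}$, and $\mathcal{Q}:=\mathcal{O}^n_{\mathbb{P}^1(\mathbb{C})}/\mathcal{F}$ carries the induced logarithmic connection. By \theoref{grothendieck}, $\mathcal{F}\cong\bigoplus_{j=1}^{p}\mathcal{O}(d_j)$ and $\mathcal{Q}\cong\bigoplus_{k=1}^{n-p}\mathcal{O}(e_k)$; since $\mathcal{F}$ and $\mathcal{Q}^{\star}$ are subbundles of trivial bundles, $d_j\leqslant0\leqslant e_k$ for all $j,k$ and $\sum_j d_j+\sum_k e_k=0$. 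The Jordan hypothesis enters via the following: as $W$ is invariant and each $\mathfrak{M}_{[\gamma_P]}$ is a single Jordan block with the single eigenvalue $\mu_P$, its restriction to $W$ and the automorphism it induces on $\mathbb{C}^n/W$ are again single Jordan blocks with eigenvalue $\mu_P$; hence every exponent of $\nabla$, of $\nabla|_{\mathcal{F}}$, and of $\nabla$ on $\mathcal{Q}$ at a point $P\in\mathscr{S}$ is $\equiv\alpha_{P}\pmod{\mathbb{Z}}$, where $\alpha_P$ is any fixed number with $e^{2\pi i\alpha_P}=\mu_P$. Setting $\sigma:=\sum_{P\in\mathscr{S}}\alpha_P$, the residue relation applied to $\mathcal{F}$, $\mathcal{Q}$ and $\mathcal{O}^n$ yields $\deg\mathcal{F}\equiv-p\sigma$, $\deg\mathcal{Q}\equiv-(n-p)\sigma$ and $0\equiv-n\sigma\pmod{\mathbb{Z}}$, while $e^{2\pi i\sigma}=\prod_{P\in\mathscr{S}}\mu_P$. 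Restricting $\prod_{P}\mathfrak{M}_{[\gamma_P]}=\mathrm{I}_n$ to $W$ and to $\mathbb{C}^n/W$ forces $(\prod_P\mu_P)^p=(\prod_P\mu_P)^{n-p}=1$, so $\prod_P\mu_P$ has order dividing $\gcd(p,n)$; being $\neq1$, this needs $\gcd(p,n)\geqslant2$, hence $2\leqslant p\leqslant n-2$ and $n\geqslant4$ — which is exactly the hypothesis, and which also shows that the integrality of the degrees alone is not yet a contradiction.

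The remaining and genuinely hard input is a sharp estimate of Bolibrukh controlling the splitting type $(d_1,\ldots,d_p)$ of a $\nabla$-stable subbundle (and dually $(e_1,\ldots,e_{n-p})$ of a $\nabla$-stable quotient) of a Fuchsian system in terms of the sizes of the Jordan blocks of the local monodromies it carries. Since those restricted monodromies are here single Jordan blocks of full size, the estimate confines the integer parts of the exponents of $\nabla|_{\mathcal{F}}$ and of $\nabla$ on $\mathcal{Q}$ narrowly enough that, together with $d_j\leqslant0\leqslant e_k$, $\sum_j d_j+\sum_k e_k=0$ and the congruences above, one is forced to $n\sigma\notin\mathbb{Z}$, contradicting $\deg\mathcal{O}^n=0$. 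Equivalently, one shows that the Deligne canonical extension of the flat bundle with monodromy $\Psi$ cannot be transformed into the trivial bundle by the elementary transformations at the points of $\mathscr{S}$ that keep every pole simple. Proving this estimate — the core of Bolibrukh's theory of admissible bundles, carried out in \cite{beauville, boli4} — is where I expect all the difficulty to lie; the reductions and cohomological bookkeeping above are routine once \theoref{grothendieck} and the Fuchs/residue relation are granted.
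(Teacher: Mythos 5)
First, a point of comparison: the paper does not actually prove \theoref{bolibeauville} --- it is stated with the citations \cite{beauville, boli4} and followed immediately by the worked example --- so there is no in-paper argument to measure you against. Your reductions are the standard ones from Beauville's expos\'e and are essentially correct: identifying a Fuchsian realization with a logarithmic connection $\nabla$ on $\mathcal O_{\mathbb P^1(\C)}^n$, saturating the flat subbundle coming from the invariant subspace $W$ to get a $\nabla$-stable $\mathcal F$, splitting $\mathcal F$ and $\mathcal Q$ by \theoref{grothendieck} with $d_j\leqslant 0\leqslant e_k$, the congruences $\deg\mathcal F\equiv -p\sigma$, $\deg\mathcal Q\equiv -(n-p)\sigma \pmod{\Z}$, and the observation that $\prod_P\mu_P\neq 1$ together with $\prod_P\mathfrak M_{[\gamma_P]}=\mathrm I_n$ forces $\gcd(p,n)\geqslant 2$ and hence $n\geqslant 4$. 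That bookkeeping is sound and correctly explains where the hypothesis $n\geqslant 4$ comes from.

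However, the proposal has a genuine gap and, as written, a wrong endgame. The gap: the entire content of the theorem is the ``sharp estimate of Bolibrukh'' controlling the exponents of $\nabla|_{\mathcal F}$ when the local monodromies are single Jordan blocks, and you only name it and defer it to \cite{beauville, boli4}; everything you do prove is routine, so the proposal does not constitute a proof. The error: you claim the estimate will force $n\sigma\notin\Z$, contradicting $\deg\mathcal O^n=0$. But $n\sigma\in\Z$ is automatic from the hypotheses alone --- taking determinants in $\prod_P\mathfrak M_{[\gamma_P]}=\mathrm I_n$ gives $\bigl(\prod_P\mu_P\bigr)^n=1$, i.e. $e^{2\pi i n\sigma}=1$ --- so no amount of analysis of a putative Fuchsian system can ``force $n\sigma\notin\Z$''. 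The contradiction in the actual argument lands elsewhere: the single-Jordan-block structure pins down the residue eigenvalues on the stable subbundle sharply enough to produce either $\sigma\in\Z$ (contradicting $\prod_P\mu_P\neq 1$) or a strictly positive lower bound on $\deg\mathcal F$ (contradicting $\mathcal F\subseteq\mathcal O^n$, which forces $\deg\mathcal F\leqslant 0$). One further caution: the statement must be read as requiring the poles to lie in $\mathscr S$; if extra apparent singularities were permitted, as your parenthetical seems to allow, the conclusion would fail, since every representation is realizable by a Fuchsian system once additional apparent singularities are tolerated.
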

\begin{example}[\cite{beauville, boli4}]We set $n=4$ and $\mathscr S=\{P_1,P_2,P_3\}$ where the $P_i$ are distinct, and define the representation $\Psi:\pi_1(\mathbb P^1(\C)\setminus\mathscr S)\to\rm{GL}(4,\C)$ by
\begin{equation*}
\begin{split}
&\mathfrak M_{\left[\gamma_{\scaleto{P_1}{4.2pt}}\right]}=\left(\begin{array}{cccc}1 & 1 & 0 & 0 \\0 & 1 & 1 & 0 \\0 & 0 & 1 & 1 \\0 & 0 & 0 & 1\end{array}\right),\quad \mathfrak M_{\left[\gamma_{\scaleto{P_2}{4.2pt}}\right]}=\left(\begin{array}{cccc}3 & 1 & 1 & -1 \\-4 & -1 & 1 & 2 \\0 & 0 & 3 & 1 \\0 & 0 & -4 & -1\end{array}\right),\\  &\mathfrak M_{\left[\gamma_{\scaleto{P_3}{4.2pt}}\right]}=\left(\begin{array}{cccc}-1 & 0 & 2 & -1 \\4 & -1 & 0 & 1 \\0 & 0 & -1 & 0 \\0 & 0 & 4 & -1\end{array}\right).
\end{split}
\end{equation*}
It is clear that this representation is not irreducible since its image fixes for instance the linear span of $(1,0,0,0)^t$ and $(0,1,0,0)^t$. $\mathfrak M_{\left[\gamma_{\scaleto{P_1}{4.2pt}}\right]}$ is its own Jordan block decomposition, whereas the Jordan block decomposition of $\mathfrak M_{\left[\gamma_{\scaleto{P_2}{4.2pt}}\right]}$, respectively $\mathfrak M_{\left[\gamma_{\scaleto{P_3}{4.2pt}}\right]}$ is given by
$\left(\begin{array}{cccc}1 & 1 & 0 & 0 \\0 & 1 & 1 & 0 \\0 & 0 & 1 & 1 \\0 & 0 & 0 & 1\end{array}\right)$, resp. $\left(\begin{array}{cccc}-1& 1 & 0 & 0 \\0 & -1 & 1 & 0 \\0 & 0 & -1 & 1 \\0 & 0 & 0 & -1\end{array}\right)$. 

Besides one easily verifies that one has $\mathfrak M_{\left[\gamma_{\scaleto{P_1}{4.2pt}}\right]}\mathfrak M_{\left[\gamma_{\scaleto{P_2}{4.2pt}}\right]}\mathfrak M_{\left[\gamma_{\scaleto{P_3}{4.2pt}}\right]}=\rm{I}_4$.
Thus $\Psi$ satisfies all the requirements of \theoref{bolibeauville}, and we can conclude that it is not isomorphic to the monodromy representation of a Fuchsian differential system on $\mathbb P^1(\C)$.
\end{example}
\subsection{Birkhoff's theorem}
Let 
\begin{equation*}\begin{split}\mathscr D=\{z\in\C;\;|z|>R_0,\;R_0>0\}\\\Delta\left(0,\dfrac{1}{R_0}\right)=\{z\in\C;\;|z|<R_0\}.\end{split}
\end{equation*}

%$$m(\mathcal D)=\Bigg\{z^kf\left(\dfrac{1}{z}\right);\;\;f\in\mathcal O\left(\Delta\left(0,\dfrac{1}{R_0}\right)\right),\;k\in\Z\Bigg\}$$
We also denote by $\mathcal O\left(\Delta\left(0,\dfrac{1}{R_0}\right)\right)$, the ring of holomorphic functions on the open disc $\Delta\left(0,\dfrac{1}{R_0}\right)$. \\

We have, see \cite[chap.~3]{sibuya}, the following
\begin{theorem}[G. D. Birkhoff]
\label{birkh}
Given an integer $k\in\Z$ and a matrix $\mathcal A(\zeta)$ belonging to $\rm{M}\left(n,\mathcal O\left(\Delta\left(0,\dfrac{1}{R_0}\right)\right)\right)$, we consider the differential system
\begin{equation}
\label{sibuya1}
\dfrac{dw}{dz}=z^k\mathcal A\left(\dfrac{1}{z}\right)w,\qquad z\in \mathscr D,
\end{equation}
where $w$ is a column vector. Then, there exists a matrix $\mathcal P(\zeta)\in\rm{GL}\left(n,\mathcal O\left(\Delta\left(0,\dfrac{1}{R_0}\right)\right)\right)$ such that the transformation
$$w=\mathcal P\left(\dfrac{1}{z}\right)v$$
changes \eqref{sibuya1} into the differential system
\begin{equation}
\label{sibuya2}
\dfrac{dv}{dz}=z^k\mathcal B\left(\dfrac{1}{z}\right)v
\end{equation}
with a matrix $\mathcal B\left(\dfrac{1}{z}\right)$ whose entries are polynomials in $\dfrac{1}{z}$ with complex constant coefficients. Furthermore one can choose such a $\mathcal P(\zeta)$ so that $z=0$ is at worst a singularity of the first kind of \eqref{sibuya2}.
\end{theorem}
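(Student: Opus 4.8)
The plan is to pass to the coordinate $\zeta=1/z$ and to realise \eqref{sibuya1}, in a punctured disc around $\zeta=0$ (that is, a neighbourhood of $z=\infty$), as the restriction of a rational system on $\mathbb{P}^1$ that has only the two singular points $z=\infty$ and $z=0$, with a simple pole at the latter. The cases $k\le -1$ are immediate: there $z=\infty$ is at worst a regular singularity of \eqref{sibuya1}, and conjugating by a fundamental matrix when $k\le -2$ (in which case $z=\infty$ is even an ordinary point), resp.\ by the single-valued factor $F$ in a decomposition $W=F\zeta^{L}$ of a fundamental matrix when $k=-1$, already produces \eqref{sibuya2} with $z=0$ at worst of the first kind. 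So I assume $k\ge 0$.

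First I would fix a fundamental matrix $W(z)$ of \eqref{sibuya1} on the universal cover of $\mathscr D$, with monodromy $W\mapsto W\mathfrak M$, $\mathfrak M\in\rm{GL}(n,\C)$, around a generating loop of $\pi_1(\mathscr D)$. Choosing a matrix $\Lambda$ in the commutant of $\mathfrak M$ with $e^{2i\pi\Lambda}=\mathfrak M$ — a choice unique only up to adding an integer matrix commuting with $\mathfrak M$ — the product $h(z):=W(z)z^{-\Lambda}$ is single-valued and holomorphic invertible on $\mathscr D=\{|z|>R_0\}$; it may well carry an essential singularity at $z=\infty$, but that point is not in $\mathscr D$, so this is harmless. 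I would then glue a holomorphic vector bundle $E$ on $\mathbb{P}^1$ out of the trivial bundle over $D_\infty:=\{|z|>R_0\}\cup\{\infty\}$, carrying the connection defined by \eqref{sibuya1}, and the trivial bundle over $\C_z:=\mathbb{P}^1\setminus\{\infty\}$, carrying the first-kind model connection $\tfrac{dv}{dz}=\tfrac{\Lambda}{z}\,v$ (fundamental matrix $z^{\Lambda}$), using $h$ as clutching function over the overlap $\mathscr D$. By the very definition of $h$ the two connections agree there, so this produces a meromorphic connection $\nabla$ on $E$ whose poles are exactly a pole of Poincaré rank $k$ at $z=\infty$ and a simple pole at $z=0$.

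Next I would apply the Birkhoff factorization lemma \lemref{birkho} to get $E\cong\bigoplus_{i=1}^{n}\mathcal O_{\mathbb{P}^1}(e_i)$, with integers $e_i$ depending only on the integer ambiguity in $\Lambda$, and I would use that ambiguity to make $E$ the \emph{trivial} bundle, $e_1=\cdots=e_n=0$. Once $E$ is trivial, writing $\nabla$ in a global trivialisation gives a system $\tfrac{dv}{dz}=\Omega(z)\,v$ with $\Omega$ holomorphic on $\C^\times$, with at most a simple pole at $z=0$, and with $z^{-k}\Omega(z)$ bounded as $z\to\infty$ — the last point because, $E$ being trivial, this global trivialisation differs from the one over $D_\infty$ only by a gauge holomorphic and invertible on all of $D_\infty$, which can neither worsen the pole at $z=\infty$ nor create an essential singularity there. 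Such an $\Omega$ is rational and equals $z^{k}\mathcal B(1/z)$ for a polynomial $\mathcal B$ in $1/z$, with $\deg\mathcal B\le k+1$ on account of the simple pole at $z=0$; this is precisely \eqref{sibuya2} with $z=0$ at worst of the first kind. The required matrix $\mathcal P(\zeta)$ is then the transition between the global trivialisation of $E$ and the one over $D_\infty$: since both trivialise $E$ on the whole disc $\Delta(0,1/R_0)$, $\mathcal P$ is holomorphic and invertible there, $\zeta=0$ included, and $w=\mathcal P(1/z)v$ carries \eqref{sibuya1} to \eqref{sibuya2}.

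I expect the main obstacle to be exactly the triviality of $E$: controlling how the splitting type $\bigoplus\mathcal O_{\mathbb{P}^1}(e_i)$ changes when one moves the lattice at the first-kind point $z=0$ — i.e.\ shifts the eigenvalues of $\Lambda$ by integers, allowing $\Lambda$ to become resonant if need be, which merely turns $z=0$ into an \emph{apparent} first-kind singularity, still admissible for the statement — genuinely uses the connection and not just the bundle, and is the technical core of Birkhoff's argument; it is here that his original proof was later found to need repair. A lesser, purely bookkeeping nuisance is keeping orientations straight (loops about $z=0$ versus about $z=\infty$, whether one needs $e^{2i\pi\Lambda}=\mathfrak M$ or its inverse) so that $h$ really is single-valued. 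For the details I would follow \cite[chap.~3]{sibuya}.
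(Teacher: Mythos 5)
The paper never actually proves \theoref{birkh}: it states the result, points to \cite[chap.~3]{sibuya}, and records only that \lemref{birkho} is the main ingredient, so there is no argument of the paper's own to match yours against step by step. That said, your outline is consistent with the intended route, and for the \emph{first} assertion it is essentially the standard argument in geometric clothing: write a fundamental matrix as $W(z)=S(z)z^{\Lambda}$ with $S$ single-valued and invertible on $\mathscr D$, factor $S(z)=\mathcal P\left(1/z\right)\Lambda(z)\mathcal E(z)$ by \lemref{birkho}, and gauge by $w=\mathcal P\left(1/z\right)v$; the new coefficient matrix $V'V^{-1}$ with $V=\Lambda(z)\mathcal E(z)z^{\Lambda}$ is single-valued and holomorphic on $\C^{\times}$, meromorphic at $0$, and $O(z^{k})$ at $\infty$ because $\mathcal P$ is holomorphic and invertible there, hence equals $z^{k}\mathcal B\left(1/z\right)$ with $\mathcal B$ a polynomial in $1/z$. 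Note that this much does not require the bundle $E$ to be trivial, only split; your insistence on trivializing $E$ before reading off the system is an unnecessary detour for this part (and, minor bookkeeping aside, the Poincar\'e rank of \eqref{sibuya1} at $\infty$ is $k+1$, not $k$, and the $k=-1$ resonant case is not quite ``immediate'').

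The genuine gap is the ``furthermore''. You assert that the integer ambiguity in $\Lambda$ can be used to force $e_{1}=\dots=e_{n}=0$. The admissible modifications --- integer matrices commuting with the monodromy, or more generally changes of logarithmic lattice at $z=0$ --- let you adjust the \emph{total} degree $\sum_{i}e_{i}$ at will, but they give no control over the individual splitting exponents; achieving $E\cong\mathcal O_{\mathbb P^1(\C)}^{\,n}$ while keeping $z=0$ a pole of the first kind \emph{is} the second assertion, and it does not follow formally from \lemref{birkho} plus lattice shifts. It is the exact analogue of the final step of the Plemelj--Birkhoff argument for the Riemann--Hilbert problem that the paper itself discusses in connection with Bolibrukh's counterexamples: it goes through when the relevant monodromy is diagonalizable (equivalently, when one can take all the $k_{i}$ in the factorization equal), and otherwise requires a genuinely separate argument. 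You acknowledge this and defer to Sibuya, so as written your proposal establishes the first claim of the theorem but leaves its second claim unproved rather than reducing it to anything you have established.
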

The main step in the proof of the \theoref{birkh} is the Birkhoff factorization lemma, a consequence of which (as we will see) is the Grothendieck theorem \theoref{grothendieck}. The following holds, see \cite[Lem. 3.31]{birkhoff2}, \cite[sec.~3]{sibuya}
\begin{lemma}[Birkhoff's factorization]
\label{birkho}
Assume that a matrix\;\,$\mathcal T(z)$ belongs to $\rm{GL}(n,\mathcal{O}(\mathscr D))$ where
$$\mathscr D=\{z\in\C;\;|z|>R_0,\;R_0>0\}.$$
Then $\mathcal T(z)$ can be written in the following form
\begin{equation}
\label{birkhof}
\mathcal T(z)=\mathcal P\left(\dfrac{1}{z}\right)\Lambda(z)\mathcal E(z)
\end{equation}
where
\begin{enumerate}
\item $\mathcal P(\zeta)\in \rm{GL}\left(n,\mathcal O\left(\Delta\left(0,\dfrac{1}{R_0}\right)\right)\right)$, $R_0>0$.
\item $\mathcal E(z)\in\rm{GL}(n,\mathcal{O}(\C))$.
\item $\Lambda(z)$ is a diagonal matrix
\[
  \Lambda(z) =
  \begin{bmatrix}
    z^{k_1} & & &\\
      &z^{k_2} & &\\
    &  &\ddots& \\
    & & &z^{k_n}
  \end{bmatrix},
\]
\end{enumerate}
and $k_1,k_2,\ldots, k_n$ are integers which one can choose such that $k_1\geqslant k_2\geqslant\ldots k_n$.
\end{lemma}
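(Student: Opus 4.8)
The plan is to prove the factorization analytically, in the spirit of Sibuya and Birkhoff: dispose of the scalar case $n=1$ by hand, reduce the general case to that of a \emph{rational} matrix by an approximation-plus-perturbation argument, and then run Grothendieck's inductive splitting explicitly in coordinates. Two remarks organize everything. First, the \emph{Laurent splitting}: a matrix function holomorphic on an annulus $\{r_1<|z|<r_2\}$ is, via its Laurent expansion (equivalently via two Cauchy integrals), uniquely the sum of a matrix holomorphic on $\{|z|>r_1\}\cup\{\infty\}$ and one holomorphic on $\{|z|<r_2\}$, the two projections being bounded in the supremum norm on any intermediate circle. Second, if an identity $\mathcal T(z)=\mathcal P_-(1/z)\Lambda(z)\mathcal E(z)$ is obtained only near one circle of $\mathscr D$, with $\mathcal E$ entire and invertible, $\Lambda=\diag(z^{k_i})$, and $\mathcal P_-$ invertible and holomorphic near $\infty$, then $\mathcal P_-(1/z)=\mathcal T(z)\mathcal E(z)^{-1}\Lambda(z)^{-1}$ is automatically holomorphic and invertible on all of $\mathscr D$, so the factors automatically acquire the domains demanded in the statement. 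Finally the case $n=1$: if $\mathcal T$ is a nonvanishing holomorphic function on $\mathscr D$ with winding number $k_1$ on $|z|=\rho>R_0$, then $z^{-k_1}\mathcal T$ has winding number $0$, hence a single-valued logarithm $f$ on $\mathscr D$; splitting $f=f_-+f_+$ into its non-positive and strictly positive powers of $z$ (the latter a power series whose radius of convergence is forced to be infinite, hence $f_+$ is entire) gives $\mathcal T=e^{f_-(z)}\,z^{k_1}\,e^{f_+(z)}$, which is the required form. Applying this to $\det\mathcal T$ one may also, if convenient, assume $\det\mathcal T(z)=z^m$.

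Reduction to the rational case. Fix a circle $|z|=\rho>R_0$; truncating the Laurent series of $\mathcal T$ there yields a Laurent-polynomial matrix $Q$ with $\sup_{|z|=\rho}\|\mathcal T-Q\|$ small enough that $\det Q\neq0$ on $|z|=\rho$ and $\|\mathcal T Q^{-1}-\mathrm I\|<1$ there. The near-identity factor $R:=\mathcal T Q^{-1}$ then admits, on that circle, a factorization $R=R_-R_+$ with $R_-$ holomorphic and invertible on $\{|z|\geqslant\rho\}\cup\{\infty\}$ and $R_+$ on $\{|z|\leqslant\rho\}$: one solves $R=R_-R_+$ with both factors near $\mathrm I$ by a fixed-point iteration (or the Banach inverse function theorem) whose linearization at $(\mathrm I,\mathrm I)$ is exactly the Laurent splitting $(\xi_-,\xi_+)\mapsto\xi_-+\xi_+$. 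Then $\mathcal T=R_-\,(R_+Q)$, and since $R_-$ already has the shape of a $\mathcal P_-$-factor, by the second remark it suffices to factor the rational matrix $R_+Q$ near a circle. Thus we may assume $\mathcal T$ is rational with $\det\mathcal T\not\equiv0$; in particular $\mathcal T(z)c(z)$ is meromorphic at $\infty$ for every polynomial vector $c$.

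For such a $\mathcal T$, regard it as the transition cocycle of a rank-$n$ bundle $E$ on $\mathbb P^1(\C)$ and argue by induction on $n$ (the base case being the scalar case). Following Grothendieck, one extracts a line subbundle (equivalently, a line-bundle quotient) of $E$ of extremal degree $k_1$; concretely this means choosing a polynomial vector $c\neq0$ making a suitable combination attached to $\mathcal T$ have the smallest possible order at $\infty$ compatible with non-degeneracy there, which also forces $c$ to be nowhere vanishing. Since $\mathbb C[z]$ and the rings of holomorphic functions on discs are Bézout, hence Hermite, rings, the unimodular rows so obtained complete to invertible matrices; these change the trivializations on the two sides and bring $\mathcal T$ to block-triangular form with corner entry $z^{k_1}$ and lower block a rank-$(n-1)$ transition matrix $\mathcal T_1$. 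By induction $\mathcal T_1$ factors with exponents $k_2,\dots,k_n$, and extremality of $k_1$ forces $k_1\geqslant k_i$ (a line subbundle of the quotient of larger degree would lift to one of $E$ of larger degree). Consequently the residual off-diagonal block represents an element of $\bigoplus_i H^1\!\big(\mathbb P^1(\C),\mathcal O(k_1-k_i)\big)$ with every $k_1-k_i\geqslant0$; these cohomology groups vanish, so the extension splits, $E\cong\bigoplus_i\mathcal O(k_i)$, and reordering gives $k_1\geqslant\dots\geqslant k_n$. (In effect this reproves \theoref{grothendieck} through the dictionary between transition cocycles and holomorphic bundles on the Riemann sphere.)

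The main obstacle is the near-identity factorization $R=R_-R_+$: it is the genuinely analytic ingredient, and carrying out the iteration so that it converges while keeping both factors invertible — introducing no spurious determinantal zeros — is where the estimates must be run with care. The other essential point, that extremality of $k_1$ yields the comparison $k_1\geqslant k_i$ and hence a split extension, is exactly the degree estimate underlying the uniqueness half of Grothendieck's theorem. By contrast, uniqueness of the $k_i$ themselves is soft: for each $\ell\in\Z$ the integer $\sum_i\max(0,\ell-k_i+1)$ equals the dimension of the space of entire vectors $c$ with $\operatorname{ord}_\infty(\mathcal T c)\leqslant\ell$, a quantity unchanged by left multiplication by an $\mathcal O(\Delta(0,1/R_0))$-invertible matrix and right multiplication by an $\mathcal O(\C)$-invertible one, and from this sequence of integers the multiset $\{k_i\}$ is recovered.
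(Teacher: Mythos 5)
The paper itself offers no proof of \lemref{birkho}: it is stated with pointers to Birkhoff's 1913 paper and to Sibuya's book, so there is no in-paper argument to measure yours against. Your proposal follows the same classical architecture as those sources — dispose of $n=1$ by a winding-number and logarithm argument, obtain a multiplicative factorization of near-identity matrices from the additive Laurent splitting by a fixed-point iteration, then split off line bundles by induction on the rank — and the scalar case, the extension remark (that factors obtained near one circle automatically propagate to all of $\mathscr D$ and to $\C$ via the identities $\mathcal P(1/z)=\mathcal T\mathcal E^{-1}\Lambda^{-1}$ and $\mathcal E=\Lambda^{-1}\mathcal P(1/z)^{-1}\mathcal T$), and the final dimension count recovering the multiset $\{k_i\}$ are all correct.

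The genuine gap is the sentence ``Thus we may assume $\mathcal T$ is rational.'' The factor $R_+$ produced by the iteration is merely holomorphic and invertible on a disc $\{|z|\leqslant\rho\}$, not rational, so $R_+Q$ is not rational, and the reduction does not land where you claim. This matters because your induction step leans on rationality exactly where the real work lies: for a rational matrix every nonzero polynomial vector $c$ makes $\mathcal T c$ meromorphic at $\infty$, so the extremal integer $k_1$ exists trivially, whereas for a matrix that is only holomorphic and invertible near a circle the existence of even one nonzero entire $c$ with $\mathcal T c$ meromorphic at $\infty$ is precisely the statement $H^0(\mathbb P^1(\C),E(m))\neq 0$ for some twist $m$ — the Riemann--Roch input of Grothendieck's proof, or, in Birkhoff's original treatment, the construction of a fundamental solution by successive approximations. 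You must either supply that existence/finiteness argument for the non-rational cocycle (at which point the detour through ``rational'' matrices is superfluous and you may run the induction directly on $\mathcal T$ with the cover $\C$, $\mathscr D\cup\{\infty\}$), or reduce to Laurent-polynomial cocycles by a genuinely different mechanism such as density of polynomial loops together with openness of the set of factorizable loops. Two smaller points: the comparison $k_1\geqslant k_i$ is not obtained by lifting a line subbundle of the quotient of larger degree, since the obstruction group $H^1(\mathbb P^1(\C),\mathcal O_{\mathbb P^1(\C)}(k_1-d))$ need not vanish for $d>k_1$; the standard argument twists by $-k_1-1$ so that the obstruction lands in $H^1(\mathbb P^1(\C),\mathcal O_{\mathbb P^1(\C)}(-1))=0$. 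And because the paper deduces \theoref{grothendieck} from this lemma in \corref{birkhofffac}, your route — which reproves the splitting theorem as an ingredient — makes that corollary circular unless the splitting step is carried out entirely independently, which is exactly the part the sketch leaves open.
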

Now we recall the following well-known lemma \cite{forster, gunning}
\begin{lemma}
\label{steenrod}
Let $\mathscr V$ and $\mathscr V^\prime$ be two holomorphic vector bundles over a Riemann surface $M$, of the same rank $n>0$, and with the same coordinate neighborhoods (local trivializations) $\{V_j\}_{j\in J}$. Let $g_{ji}$, $g_{ji}^\prime$ denote their transition functions (see third bullet point in \defiref{def2}). Then $\mathscr V$ and $\mathscr V^\prime$ are isomorphic if and only if there exists holomorphic functions $\lambda_j:V_j:\rm{GL}(n,\C)$, defined for each $j\in J$, and such that
$$g_{ji}^\prime=\lambda_j(x)^{-1}g_{ji}(x)\lambda_i(x),\qquad x\in V_i\cap V_j.$$
\end{lemma}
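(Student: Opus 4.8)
The plan is to reduce the statement to the bookkeeping of transition cocycles, in the spirit of the characterisation of bundle homomorphisms recalled above. Write $\psi_j$ and $\psi_j^\prime$ for the given local trivializations of $\mathscr V$ and $\mathscr V^\prime$ over $V_j$, so that $g_{ji}=\psi_j\circ\psi_i^{-1}$ and $g_{ji}^\prime=\psi_j^\prime\circ(\psi_i^\prime)^{-1}$, both acting on $V_i\cap V_j$ by $(x,v)\mapsto(x,g_{ji}(x)v)$ and $(x,v)\mapsto(x,g_{ji}^\prime(x)v)$ respectively. Throughout, a bundle isomorphism is understood to cover $\identity_M$, as in the definition of a vector bundle homomorphism.

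For the forward implication I would start from an isomorphism $\varphi\colon\mathscr V\to\mathscr V^\prime$ and read it in each chart: the composite $T_j:=\psi_j^\prime\circ\varphi\circ\psi_j^{-1}\colon V_j\times\C^n\to V_j\times\C^n$ is a biholomorphism over $V_j$ that is linear on each fibre, hence of the form $(x,v)\mapsto(x,\Lambda_j(x)v)$ for a map $\Lambda_j\colon V_j\to\rm{GL}(n,\C)$; this $\Lambda_j$ is holomorphic because $\varphi$ and the trivializations $\psi_j,\psi_j^\prime$ are holomorphic (and biholomorphic), and $\Lambda_j(x)$ is invertible because $\varphi$ is bijective on the fibre over $x$. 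On $V_i\cap V_j$ I would then insert the identity $T_j=\big(\psi_j^\prime\circ(\psi_i^\prime)^{-1}\big)\circ T_i\circ\big(\psi_i\circ\psi_j^{-1}\big)$ and compare the linear parts to obtain $\Lambda_j=g_{ji}^\prime\,\Lambda_i\,g_{ji}^{-1}$. Setting $\lambda_j:=\Lambda_j^{-1}$ rewrites this as the asserted relation $g_{ji}^\prime=\lambda_j^{-1}g_{ji}\lambda_i$.

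For the converse I would run the same computation in reverse: given holomorphic $\lambda_j\colon V_j\to\rm{GL}(n,\C)$ with $g_{ji}^\prime=\lambda_j^{-1}g_{ji}\lambda_i$ on $V_i\cap V_j$, I define $\varphi_j\colon\mathscr V|_{V_j}\to\mathscr V^\prime|_{V_j}$ by declaring $\psi_j^\prime\circ\varphi_j\circ\psi_j^{-1}$ to be $(x,v)\mapsto(x,\lambda_j(x)^{-1}v)$; each $\varphi_j$ is then a holomorphic, fibrewise-linear, fibrewise-invertible bundle map over $V_j$. The relation $g_{ji}^\prime=\lambda_j^{-1}g_{ji}\lambda_i$, rearranged as $\lambda_j^{-1}g_{ji}=g_{ji}^\prime\lambda_i^{-1}$, is exactly the statement that $\varphi_i$ and $\varphi_j$ agree on $V_i\cap V_j$; hence the $\varphi_j$ glue to a global morphism $\varphi\colon\mathscr V\to\mathscr V^\prime$, which is an isomorphism since it is one over every $V_j$. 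A reference such as \cite{forster, gunning} can be cited for the gluing step.

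There is no genuine obstacle here: the content is entirely formal once the trivializations are fixed. The only points requiring care are keeping the primed and unprimed transition functions on the correct sides and getting the order of composition right in the cocycle identity (this is what pins down whether one writes $\lambda_j^{-1}g_{ji}\lambda_i$ or $\lambda_jg_{ji}\lambda_i^{-1}$), and recording why the fibrewise maps $\Lambda_j$, resp. $\lambda_j$, are holomorphic, which is immediate from the holomorphy of $\varphi$ and the biholomorphy of the trivializations.
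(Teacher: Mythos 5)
Your proof is correct. Note that the paper offers no proof of \lemref{steenrod} at all---it is stated as a well-known fact with a pointer to \cite{forster, gunning}---so there is nothing to compare against; your chart-by-chart computation is the standard argument from those references, and the cocycle bookkeeping checks out in both directions (in particular $\Lambda_j=g'_{ji}\Lambda_i g_{ji}^{-1}$ does yield $g'_{ji}=\lambda_j^{-1}g_{ji}\lambda_i$ after setting $\lambda_j=\Lambda_j^{-1}$, and the rearrangement $\lambda_j^{-1}g_{ji}=g'_{ji}\lambda_i^{-1}$ is exactly the compatibility needed to glue the local maps $\varphi_j$).
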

\begin{corollary}
\label{birkhofffac}
The Birkhoff factorization \lemref{birkho} implies the Grothendieck theorem \theoref{grothendieck}.
\end{corollary}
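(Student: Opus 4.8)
The plan is to realise an arbitrary rank-$n$ holomorphic vector bundle $E$ on $\mathbb P^1(\C)$ by a single transition function for a well-chosen two-element cover, to apply the Birkhoff factorization \lemref{birkho} to that transition function, and then to interpret the three factors of \eqref{birkhof} as a change of trivializations, in the sense of \lemref{steenrod}, that replaces the transition function by the diagonal matrix $\Lambda$. Apart from \lemref{birkho} itself, the only ingredients are the classical fact that every holomorphic vector bundle over $\C$ (hence over a disc) is holomorphically trivial, see \cite{forster, gunning}, and the transition-function criterion \lemref{steenrod} for bundle isomorphism, both already at our disposal.

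Concretely, fix $R_0>0$ and cover $\mathbb P^1(\C)$ by $U_0$, the affine chart with coordinate $z$ (so $U_0\cong\C$), and $U_1:=\mathbb P^1(\C)\setminus\{\,z:|z|\leqslant R_0\,\}$, which in the coordinate $\tilde z=1/z$ at $\infty$ is the disc $\Delta(0,1/R_0)$; then $U_0\cap U_1=\mathscr D=\{\,z:|z|>R_0\,\}$. Since $E$ is holomorphically trivial over $U_0$ and over $U_1$ (each being biholomorphic to $\C$, resp.\ to a disc), we fix trivializations there, and the associated transition function is a matrix $\mathcal T\in\mathrm{GL}(n,\mathcal O(\mathscr D))$. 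Applying \lemref{birkho} to $\mathcal T$ yields $\mathcal T(z)=\mathcal P(1/z)\,\Lambda(z)\,\mathcal E(z)$ with $\mathcal P(\zeta)\in\mathrm{GL}(n,\mathcal O(\Delta(0,1/R_0)))$, $\mathcal E\in\mathrm{GL}(n,\mathcal O(\C))$, and $\Lambda(z)=\diag(z^{k_1},\dots,z^{k_n})$, $k_1\geqslant\cdots\geqslant k_n$.

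Now observe that $\mathcal P(1/z)$, read in the coordinate $\tilde z=1/z$, is $\mathcal P(\tilde z)\in\mathrm{GL}(n,\mathcal O(U_1))$, while $\mathcal E^{-1}\in\mathrm{GL}(n,\mathcal O(\C))=\mathrm{GL}(n,\mathcal O(U_0))$; rearranged, the factorization reads $\mathcal P(1/z)^{-1}\,\mathcal T(z)\,\mathcal E(z)^{-1}=\Lambda(z)$ on $U_0\cap U_1$. Setting $\lambda_1:=\mathcal P(1/\cdot)$ and $\lambda_0:=\mathcal E^{-1}$, this is exactly the relation $\lambda_1^{-1}\mathcal T\lambda_0=\Lambda$ of \lemref{steenrod}, so $E$ is isomorphic to the bundle whose transition function is $\Lambda(z)=\diag(z^{k_1},\dots,z^{k_n})$. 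The latter splits as a direct sum of $n$ line bundles with transition functions $z^{k_1},\dots,z^{k_n}$, each of which is some $\mathcal O_{\mathbb P^1(\C)}(a_i)$, $a_i\in\Z$ (\secref{vec}); hence $E\cong\mathcal O_{\mathbb P^1(\C)}(a_1)\oplus\cdots\oplus\mathcal O_{\mathbb P^1(\C)}(a_n)$, which is the existence assertion of \theoref{grothendieck}. The uniqueness assertion uses no part of \lemref{birkho} and follows as in the proof of \theoref{grothendieck} already given: for each $m\in\Z$ one has $\dim H^0(\mathbb P^1(\C),E\otimes\mathcal O_{\mathbb P^1(\C)}(m))=\sum_i\max(0,a_i+m+1)$, and this piecewise-linear function of $m$ determines $n$ (from its slope for $m\gg0$) and the multiset $\{a_i\}$ (from its break points), so two such decompositions agree up to reordering.

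Since \lemref{birkho} already carries all of the analytic weight, the corollary presents no real obstacle of its own; the one point to get exactly right is the choice of cover, so that the Birkhoff factors $\mathcal P(1/\cdot)$, $\Lambda$, $\mathcal E$ are holomorphic and invertible on $U_1$, on the overlap $\mathscr D$, and on $U_0$ respectively — which is precisely what makes \lemref{steenrod} applicable on the nose.
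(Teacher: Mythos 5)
Your proof is correct and follows essentially the same route as the paper's: represent the bundle by a single transition matrix $\mathcal T\in\mathrm{GL}(n,\mathcal O(\mathscr D))$ for the two-chart cover $\{\C,\mathscr D_\infty\}$, apply \lemref{birkho}, and read the factors $\mathcal P(1/\cdot)^{-1}$ and $\mathcal E$ as the change of trivializations required by \lemref{steenrod}, so that $E$ becomes isomorphic to the bundle with diagonal transition matrix $\Lambda$. You additionally make explicit two points the paper leaves implicit, namely that $E$ is holomorphically trivial over each chart (so that a single transition function exists) and the uniqueness of the $a_i$ via $\dim H^0(\mathbb P^1(\C),E\otimes\mathcal O_{\scaleto{\mathbb P^1(\C)}{7pt}}(m))$, both of which are welcome refinements rather than a different argument.
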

\begin{proof}
Let us denote by $\mathscr D_\infty$, the set obtained by adjoining $\infty$ to $ \mathscr D$, then $ \mathscr D_\infty\cup\C=\mathbb P^1(\C)$ and $ \mathscr D_\infty\cap \C=\mathscr D$. Let
\begin{equation*}
\begin{split}
\lambda_{ \mathscr D_\infty}&\colon \mathscr D_\infty\to\rm{GL}(n,\C)\\
&z\mapsto \mathcal P\left(1/z\right)^{-1}
\end{split}
\end{equation*}
%\lambda_{\mathcal D_\infty}:\mathcal D_\infty\to$$
and
\begin{equation*}
\begin{split}
\lambda_{\C}&\colon\C\to\rm{GL}(n,\C)\\
&z\mapsto \mathcal E\left(z\right).
\end{split}
\end{equation*}
Given a matrix $\mathcal T(z)$, we can construct an analytic vector bundle over the Riemann sphere of rank $n$, with transition functions $\mathcal T(z)$. Similarly given a matrix $\Lambda(z)$, one can build another vector bundle over the Riemann sphere, with transition functions $\Lambda(z)$. Formula \eqref{birkhof} asserts that these two vector bundles are isomorphic by using \lemref{steenrod}. Since $\Lambda(z)$ is diagonal it gives the vector bundle $\mathcal O_{\scaleto{\mathbb P^1(\C)}{7pt}}(k_1)\oplus\ldots\oplus\mathcal O_{\scaleto{\mathbb P^1(\C)}{7pt}}(k_n)$. Hence every vector bundle over $\mathbb P^1(\C)$ is analytically isomorphic to a direct sum of line bundles of the form
$$\mathcal O_{\scaleto{\mathbb P^1(\C)}{7pt}}(k_1)\oplus\ldots\oplus\mathcal O_{\scaleto{\mathbb P^1(\C)}{7pt}}(k_n),\quad k_i\in\Z.$$
\end{proof}

\end{document}